\makeatletter \@namedef{subjclassname@2010}{
  \textup{2010} Mathematics Subject Classification}
\definecolor{ultramarine}{rgb}{0.07, 0.04, 0.56}
\begin{document}
\setlength{\unitlength}{1mm}

\newtheorem{thm}{Theorem}[section]
\newtheorem{lem}[thm]{Lemma}
\newtheorem{prop}[thm]{Proposition}
\newtheorem{defn}[thm]{Definition}
\newtheorem{cor}[thm]{Corollary}
\newtheorem*{rmk}{Remark}

\newcommand{\QQ}{\mathbb{Q}}
\newcommand{\ZZ}{\mathbb{Z}}
\newcommand{\RR}{\mathbb{R}}
\newcommand{\NN}{\mathbb{N}}
\newcommand{\CC}{\mathbb{C}}
\newcommand{\XX}{\mathbb{X}}

\newcommand{\cO}{\mathcal{O}}
\newcommand{\cC}{\mathcal{C}}
\newcommand{\cN}{\mathcal{N}}

\newcommand{\fa}{\mathfrak{a}}
\newcommand{\fp}{\mathfrak{p}}
\newcommand{\fF}{\mathfrak{F}}
\newcommand{\fM}{\mathfrak{M}}
\newcommand{\fX}{\mathfrak{X}}

\newcommand{\SL}{\mathrm{SL}}
\newcommand{\GL}{\mathrm{GL}}
\newcommand{\sign}{\mathrm{sign}}
\newcommand{\op}{\mathrm{op}}

\title[Congruences for odd quadratic class numbers]
{Congruences for odd class numbers of 
quadratic fields with odd discriminant}

\author{Jigu Kim and Yoshinori Mizuno}

\keywords{Class numbers, Quadratic fields, Hirzebruch sums}
\subjclass[2010]{Primary 11R29, Secondary 11A55, 11F20}

\maketitle

\noindent{\small {\bf Abstract.}
For any distinct two primes $p_1\equiv p_2\equiv 3$ $(\text{mod }4)$, let $h(-p_1)$, $h(-p_2)$ and $h(p_1p_2)$ be the class numbers of the quadratic fields
$\mathbb{Q}(\sqrt{-p_1})$, $\mathbb{Q}(\sqrt{-p_2})$ and $\mathbb{Q}(\sqrt{p_1p_2})$, respectively.
Let $\omega_{p_1p_2}:=(1+\sqrt{p_1p_2})/2$ and let $\Psi(\omega_{p_1p_2})$ be the Hirzebruch sum of $\omega_{p_1p_2}$. 
We show that $h(-p_1)h(-p_2)\equiv h(p_1p_2)\Psi(\omega_{p_1p_2})/n$ $(\text{mod }8)$, where $n=6$ (respectively, $n=2$) if $\min\{p_1,p_2\}>3$ (respectively, otherwise). 
We also consider the real quadratic order with conductor $2$ in $\mathbb{Q}(\sqrt{p_1p_2})$. 
}

\section{Introduction and results}\label{sec-1}
In this paper $\Delta$ denotes a {\it quadratic discriminant}, which means that it is a non-square integer such that $\Delta\equiv 0$ or $1$ $(\text{mod }4)$.
It is called {\it fundamental} if it is the discriminant of a quadratic number field.
Every quadratic discriminant $\Delta$ is a unique square multiple of a fundamental discriminant and we write $\Delta=f^2d$, where $f$ is a positive integer and $d$ is the fundamental discriminant of the field $K=\mathbb{Q}(\sqrt{d})$.
By $h(\Delta)$ (respectively, $h^+(\Delta)$) we denote the {\it wide} (respectively, {\it narrow})
{\it class number}  of $\mathcal{O}_{\Delta}$ the quadratic order with {\it conductor} $f$ in $K$.
Let $\omega_\Delta:=(\sigma_\Delta+\sqrt{\Delta})/2$, where $\sigma_\Delta=0$ if $\Delta$ is even and $\sigma_\Delta=1$ if $\Delta$ is odd.
For $\Delta>0$, the {\it fundamental unit} of the quadratic order $\mathcal{O}_{\Delta}$ is denoted by $\varepsilon_{\Delta}$. We have
\begin{equation*}
\varepsilon_\Delta=\frac{t+u\sqrt{\Delta}}{2},
\end{equation*}
where $t$ and $u$ are positive integers satisfying $t\equiv u$
$(\text{mod }2)$.

Any real quadratic irrational $\xi$ has a continued fraction expansion of the form
{\scriptsize 
$$\xi=
\hat{v}_1+\frac{1}{\displaystyle
\hat{v}_2+\frac{1}{\displaystyle \,\,\,\,
\stackunder{}{\ddots\stackunder{}{
\,\,\displaystyle+\frac{1}{\displaystyle \hat{v}_k+\frac{1}{\displaystyle v_0
+\frac{1}{\displaystyle \,\,\,\,
\stackunder{}{\ddots\stackunder{}{
\,\,\displaystyle+\frac{1}{\displaystyle v_{l-1}+\frac{1}{\displaystyle v_0+
\,\,\,\,
\stackunder{}{\ddots\stackunder{}{}}
}}
}}}}}
}}}}.
$$}
\vspace{5pt}

\noindent
with the {\it basic period} $\{v_0,\cdots,v_{l-1}\}$.
We write
$\xi=[\hat{v}_1,\cdots,\hat{v}_k;\overline{v_0,\cdots,v_{l-1}}],$
and let
$$\Psi(\xi):=\left\{
\begin{array}{ll}\displaystyle\sum_{i=0}^{l-1}(-1)^{k+i}v_i &\text{if }l\text{ is even},\\
\displaystyle 0 &\text{if }l\text{ is odd}.
\end{array}\right.
$$
This number $\Psi(\xi)$ is said to be the {\it Hirzebruch sum} of $\xi$.
When the {\it pre-period} $\{\hat{v}_1,\cdots \hat{v}_k\}$ is empty, we understand $k=0$. 
In the 1970s, Hirzebruch and Zagier \cite{Hir73,Zag75} obtained a beautiful formula on the class number of an imaginary quadratic field in terms of the Hirzebruch sum\footnote{In \cite{CGPY15} and \cite{Miz21}, the authors used the Zagier sum, which is defined by a {\it negative continued fraction}. We note that the Hirzebruch sum is three times the Zagier sum, $3\mid \Psi(\omega_{4p})$ in Theorem \ref{CGPY-thm} and $3\mid \Psi(\omega_{16p})$ in Theorem \ref{Miz-thm} (for detail, see \cite[p. 312]{CG19}, \cite[p.353]{Miz21} and \cite[Satz 2]{Lan76}).} (for the formula, see Remark \ref{Remark 2.17}).
As an application of Zagier's formula, Chua et al. 
solved Richard Guy's conjecture (cf. \cite[p. 1347]{CGPY15}), which is equivalent to the following theorem.
\begin{thm}[{\cite[Theorem 1.3]{CGPY15}}]\label{CGPY-thm}
For any prime $p\equiv 3$ $(\text{mod }4)$ with $p>3$,
$$h(-p)\equiv h(4p)\frac{\Psi(\omega_{4p})}{3}\quad (\text{mod } 16).$$
\end{thm}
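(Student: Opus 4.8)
The plan is to derive the congruence from the Hirzebruch--Zagier class number formula, Gauss's genus theory, and a $2$-adic analysis of the relevant continued fraction periods. Since $\omega_{4p}=\sqrt{p}$, the quantity $\Psi(\omega_{4p})$ is the Hirzebruch sum attached to the principal ideal class of the order $\cO_{4p}$, and the Hirzebruch--Zagier formula (recalled in Remark~\ref{Remark 2.17}; see \cite{Hir73,Zag75}) reads
\begin{equation}\label{HZ-plan}
3\,h(-p)=\sum_{A}\Psi(\xi_A),
\end{equation}
the sum running over the wide ideal classes $A$ of $\cO_{4p}$, where $\xi_A$ is the purely periodic continued fraction of a reduced form in $A$, normalised so that $\Psi(\xi_{[\cO]})=\Psi(\omega_{4p})$ and so that a class and its conjugate $\overline A=A^{-1}$ contribute equal terms. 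It therefore suffices to show
\begin{equation}\label{goal-plan}
\sum_{A}\Psi(\xi_A)\equiv h(4p)\,\Psi(\omega_{4p})\pmod{16}.
\end{equation}
Indeed, \eqref{HZ-plan} and \eqref{goal-plan} give $3h(-p)\equiv h(4p)\Psi(\omega_{4p})\pmod{16}$, and since $3\mid\Psi(\omega_{4p})$ for $p>3$ (recall the footnote above) we may cancel the unit $3$ modulo $16$ to reach $h(-p)\equiv h(4p)\,\Psi(\omega_{4p})/3\pmod{16}$.

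Next I would assemble the genus-theoretic input, all standard for a prime $p\equiv 3\ (\mathrm{mod}\ 4)$: the discriminant $-p$ is prime, so the class group of $\QQ(\sqrt{-p})$ has odd order and $h(-p)$ is odd; we have $N(\varepsilon_{4p})=+1$, and $4p$ has exactly two prime discriminant factors, so the narrow class group of $\cO_{4p}$ has cyclic $2$-Sylow subgroup, $h^{+}(4p)=2h(4p)$, and $h(4p)$ is odd. In particular the wide class group of $\cO_{4p}$ has no element of order $2$; hence in \eqref{HZ-plan} the principal class contributes $\Psi(\omega_{4p})$ and stands alone, while the remaining $h(4p)-1$ classes fall into $(h(4p)-1)/2$ conjugate pairs $\{A,A^{-1}\}$, within each of which $\Psi(\xi_A)=\Psi(\xi_{A^{-1}})$.

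The heart of the matter is then the congruence
\begin{equation}\label{key-plan}
\Psi(\xi_A)\equiv\Psi(\omega_{4p})\pmod{8}\qquad\text{for every ideal class }A.
\end{equation}
Granting \eqref{key-plan}, each conjugate pair contributes $\Psi(\xi_A)+\Psi(\xi_{A^{-1}})=2\,\Psi(\xi_A)\equiv 2\,\Psi(\omega_{4p})\pmod{16}$, so
\begin{equation*}
\sum_{A}\Psi(\xi_A)\equiv\Psi(\omega_{4p})+\frac{h(4p)-1}{2}\cdot 2\,\Psi(\omega_{4p})=h(4p)\,\Psi(\omega_{4p})\pmod{16},
\end{equation*}
which is \eqref{goal-plan}. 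To establish \eqref{key-plan} I would express $\Psi(\xi_A)$ via the Rademacher $\Phi$-function (the Dedekind $\eta$-multiplier cocycle) evaluated at the automorph $\gamma_A\in\SL_2(\ZZ)$ of a reduced form in $A$; all such $\gamma_A$ are hyperbolic with the common trace $t$, where $\varepsilon_{4p}=(t+u\sqrt{4p})/2$ and $t^2-4p\,u^{2}=4$. Writing $\Phi(\gamma_A)$ in the standard way as a combination of $t/c$ and a Dedekind sum and invoking the classical mod-$8$ congruences for Dedekind sums, one should find that $\Psi(\xi_A)$ is determined modulo $8$ by the discriminant $4p$ alone, which gives \eqref{key-plan}. (Alternatively, \eqref{key-plan} might be obtained by a direct induction comparing the periods of $\xi_A$ and $\omega_{4p}$ term by term.)

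I expect \eqref{key-plan} to be the genuine obstacle: it calls for careful $2$-adic bookkeeping with Dedekind sums made uniform over all ideal classes, and one must check both that the modulus $8$ --- rather than a higher power of $2$ --- is what survives and that the class-by-class normalisation built into \eqref{HZ-plan} is consistent (so that conjugate classes really do contribute equal terms). The hypothesis $p>3$ is used here and is necessary, since for $p=3$ one has $3\nmid\Psi(\omega_{12})$ and the statement is vacuous. As a safeguard I would also check the shortest cases, those of period length $2$, directly, since there the pre-period and sign conventions in the definition of $\Psi$ are the most delicate.
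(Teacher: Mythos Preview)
This theorem is not proved in the present paper at all: it is quoted from \cite{CGPY15} as background, and the paper's own contribution is the analogous Theorems~\ref{main-thm} and~\ref{main-thm-f2}. There is therefore no proof here to compare against directly. That said, your outline follows precisely the template the paper uses for those theorems (and which \cite{CGPY15} used originally): apply the Zagier formula, exploit the odd class number to pair each non-principal class with its inverse, and reduce everything to a mod-$8$ congruence valid class by class.

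There is, however, a genuine gap in your formulation. The Hirzebruch--Zagier identity is not $3h(-p)=\sum_A\Psi(\xi_A)$; it is (see \eqref{KM-equality-2} and Remark~\ref{Remark 2.17})
\[
3\,h(-p)\;=\;\sum_{[\eta]_\sim\in\fX_{4p}}\chi^{(4p)}_{-4,-p}\bigl([I(\eta)]^+\bigr)\,\Psi(\eta),
\]
with the genus character $\chi^{(4p)}_{-4,-p}$ weighting each term. This is not a matter of ``normalisation'': the character genuinely takes both values $\pm1$, and the correct key congruence is $\chi(A)\,\Psi(\xi_A)\equiv\Psi(\omega_{4p})\pmod 8$, not your \eqref{key-plan}. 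When $\chi(A)=-1$ one has $\Psi(\xi_A)\equiv-\Psi(\omega_{4p})\pmod 8$, so \eqref{key-plan} as stated is false, and your proposed conclusion that ``$\Psi(\xi_A)$ is determined modulo $8$ by the discriminant $4p$ alone'' cannot hold. What actually happens in the Dedekind-sum computation (compare the proof of Lemma~\ref{pf-lem-3}) is that quadratic reciprocity produces exactly the sign $\chi_{-4}(a)$ for the leading coefficient $a$ of a representing form, and this sign \emph{is} the genus character value. Once you insert the character, the pairing step is unchanged (Lemma~\ref{pf-lem-2} gives $\chi(A)\Psi(\xi_A)=\chi(A^{-1})\Psi(\xi_{A^{-1}})$), and the rest of your outline goes through.
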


The result obtained by Zagier \cite{Zag75}, however, does not work in the case of primes congruent
to 1 modulo 4.
Cheng and Guo \cite{CG19} used results obtained by Lu \cite{Lu91}, and they proved similar congruences modulo $8$ (respectively, modulo $4$) if $p\equiv 1$ $(\text{mod 8})$ (respectively, if $p\equiv 5$ $(\text{mod 8})$ and $\varepsilon_{p}$ has integral coefficients).

The second named author and Kaneko \cite{KM20}
presented an explicit form of genus character $L$-functions of quadratic orders, and
they generalized Zagier's results (for detail, see Section \ref{sec-2}).
As a consequence, the second named author showed a reasonable counterpart of Theorem \ref{CGPY-thm}.

\begin{thm}[{\cite[Theorem 1]{Miz21}}]\label{Miz-thm}
For any prime $p\equiv 1$ $(\text{mod }4)$,
$$h(-4p)\equiv h(16p) \frac{\Psi(\omega_{16p})}{3}\quad (\text{mod } 16).$$
\end{thm}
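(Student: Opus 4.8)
The plan is to obtain Theorem~\ref{Miz-thm} from an exact class number identity of Hirzebruch--Zagier type and then to extract the congruence modulo $16$ by a careful $2$-adic analysis, running parallel to the proof of Theorem~\ref{CGPY-thm} but with the genus character $L$-functions of quadratic \emph{orders} from \cite{KM20} in place of Zagier's formula for fundamental discriminants. The set-up is as follows: for a prime $p\equiv 1\pmod 4$ one has $16p=4^{2}p$, so $\mathcal{O}_{16p}$ is the order of conductor $4$ in $K=\mathbb{Q}(\sqrt{p})$, and $16p$ is the product of the two negative fundamental discriminants $-4$ and $-4p$ (the latter being fundamental since $-p\equiv 3\pmod 4$). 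Although this factorization is not coprime in the classical sense, it is exactly of the type handled by the genus character formalism for orders in \cite{KM20}, and it singles out a genus character $\chi$ of $\mathcal{O}_{16p}$ whose $L$-function factors, up to an explicit Euler factor at $2$ recording the non-maximality of $\mathcal{O}_{16p}$, as $L(s,\chi_{-4})L(s,\chi_{-4p})$.

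First I would evaluate this identity at a special point and compare the two descriptions. On one side, the factorization together with Dirichlet's class number formula expresses the special value in terms of $h(-4)=1$ and $h(-4p)$; since $-4p\le -20$, the associated field has only $\pm1$ as roots of unity, so no small-prime exception arises (this is the structural reason the theorem needs no analogue of the condition $p>3$ in Theorem~\ref{CGPY-thm}). On the other side, the continued-fraction evaluation of the genus character $L$-value of the real quadratic order $\mathcal{O}_{16p}$ supplied by \cite{KM20} gives a $\chi$-weighted sum, over the narrow ideal classes $\mathcal{A}$ of $\mathcal{O}_{16p}$, of Hirzebruch sums $\Psi(\omega_{\mathcal{A}})$ (with the appropriate sign convention). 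Equating the two and clearing the transcendental factors yields an integer identity of the shape
\begin{equation*}
\sum_{\mathcal{A}}\chi(\mathcal{A})\,\Psi(\omega_{\mathcal{A}})=c\,h(-4p)\,\lambda_{2},
\end{equation*}
where $c$ is an explicit small constant and $\lambda_{2}$ an explicit local factor at $2$ depending only on $p\bmod 8$. Using the behaviour of $\Psi$ under the standard involutions of the class group (inversion and Galois conjugation), the left-hand side collapses to a short sum over the ambiguous narrow classes of $\mathcal{O}_{16p}$, which one enumerates explicitly by genus theory; expressing each surviving term through $\Psi(\omega_{16p})$ and $h(16p)$ then produces an exact relation tying together $h(16p)\,\Psi(\omega_{16p})$, $h(-4p)$, the unit index $[\mathcal{O}_{K}^{\times}:\mathcal{O}_{16p}^{\times}]$, the passage from the narrow to the wide class number, and the factor $1-(\tfrac{p}{2})\tfrac{1}{2}$. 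In particular this exhibits the divisibility $3\mid\Psi(\omega_{16p})$ noted in the footnote, and reduces the theorem to proving that, for the resulting correction factor $C$ in $h(16p)\,\Psi(\omega_{16p})/3=C\,h(-4p)$, one has $h(-4p)\,(C-1)\equiv 0\pmod{16}$.

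The $2$-adic bookkeeping in this last step is what I expect to be the main obstacle. It demands sharp information on the $2$-adic valuation of $h(-4p)$: genus theory for the discriminant $-4p$, which has precisely the two prime discriminant divisors $-4$ and $p$, gives $2\mid h(-4p)$ unconditionally, while $4\mid h(-4p)$ and $8\mid h(-4p)$ are governed by R\'edei--Reichardt symbols and quartic residue criteria and depend on $p$ modulo $16$. These must be balanced against the $2$-adic valuation of $C-1$, which itself depends on whether $\varepsilon_{p}$ (of norm $-1$) already lies in $\mathcal{O}_{16p}$, on $h(16p)\bmod 4$, and on $p\bmod 8$. I would organize the argument as a case split on $p\bmod 8$, refined by the unit index and the parity data of $h(16p)$, and I anticipate the case $p\equiv 5\pmod 8$ to be the delicate one: there the factor $\lambda_{2}$ contributes an extra $3$ that must be absorbed against the $3$ already dividing $\Psi(\omega_{16p})$, and $h(-4p)$ is only guaranteed to be divisible by $2$, so the congruence has little slack and one is forced to invoke finer continued-fraction congruences in the spirit of Lu~\cite{Lu91} and \cite{CGPY15}.
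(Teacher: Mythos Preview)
This theorem is quoted from \cite{Miz21} rather than proved in the present paper, but the paper's own proofs of Theorems~\ref{main-thm} and~\ref{main-thm-f2} are explicitly modelled on \cite{Miz21} (see the opening line of Section~\ref{sec-4}), so the intended mechanism can be read off from Lemmas~\ref{pf-lem-1}--\ref{pf-lem-4} together with the final assembly there.

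Your outline diverges from that mechanism at the decisive step, and the divergence is a genuine gap rather than an alternative route. You claim that the $\chi$-weighted sum $\sum_{\mathcal A}\chi(\mathcal A)\Psi(\omega_{\mathcal A})$ ``collapses to a short sum over the ambiguous narrow classes'' under inversion and conjugation. It does not: as in Lemma~\ref{pf-lem-2} (and the corresponding lemma of \cite{Miz21}), the involution $\mathcal A\mapsto\mathcal A^{-1}$ sends $\chi(\mathcal A)\Psi(\omega_{\mathcal A})$ to the \emph{same} value, not to its negative, so inverse pairs contribute equally and nothing cancels. There is therefore no exact identity of the shape $h(16p)\Psi(\omega_{16p})/3=C\,h(-4p)$ to which one could afterwards apply R\'edei--Reichardt information on $v_2(h(-4p))$; for a non-principal class $\mathcal A$ the number $\Psi(\omega_{\mathcal A})$ simply cannot be expressed exactly through $\Psi(\omega_{16p})$.

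What replaces this in the actual argument is a \emph{termwise congruence}: for every class one proves $\chi(\mathcal A)\Psi(\omega_{\mathcal A})\equiv \Psi(\omega_{16p})$ modulo a fixed power of~$2$. This is the role of Lemmas~\ref{pf-lem-3}--\ref{pf-lem-4} here (and of the analogous lemma in \cite{Miz21}), and it is obtained by rewriting $\Psi$ via the Dedekind-sum invariant $n(\xi)=n_{M_\xi}$ of the automorph matrix (Lemmas~\ref{via-cal}--\ref{not-reduced}), then comparing the two Dedekind sums through Proposition~\ref{Zag75-prop}~\eqref{Chu-prop-2} and quadratic reciprocity, together with structural information on the fundamental unit of the order (the analogue of Lemma~\ref{ZY-thm}). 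Once this is known, the pairing $\mathcal A\leftrightarrow\mathcal A^{-1}$ --- using only that $h(16p)$ is odd --- supplies the extra factor of $2$ and yields the congruence modulo~$16$. No R\'edei--Reichardt symbols, quartic residue criteria, or case analysis of $v_2(h(-4p))$ enter at any point; your proposed $2$-adic bookkeeping on $h(-4p)$ is aimed at the wrong quantity, and the ``delicate'' case you anticipate for $p\equiv 5\pmod 8$ is handled uniformly by the Dedekind-sum computation, not by squeezing extra $2$-divisibility out of $h(-4p)$.
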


\vspace{1pt}

We remark that for a prime $p\equiv 1$ $(\text{mod }4)$, $h(16p)$ the class number of the quadratic order $\mathcal{O}_{16p}$ equals to $3h(p)$ (respectively, $h(p)$) if $p\equiv 5$ $(\text{mod }8)$ and $\varepsilon_{p}$ has integral coefficients (respectively, otherwise) (cf. Lemma \ref{lem-cor}).
The odd class number of $\mathcal{O}_\Delta$ $(\Delta>0)$ deserves special emphasis in both proofs of Theorems \ref{CGPY-thm} and \ref{Miz-thm}.
For a fundamental discriminant $d>0$, it is well-known that $h(d)$ is odd if and only if  
\begin{enumerate}[\hspace{0.7cm}(i)] 
\item either $d=8$, 
\item or $d=p$ for some prime $p\equiv1$ $(\text{mod }4)$,
\item or $d\in\{4p,8p\}$ for some prime $p\equiv3$ $(\text{mod }4)$,
\item or $d=p_1p_2$ for some distinct primes $p_1\equiv p_2\equiv 3$ $(\text{mod }4)$
\end{enumerate}
(cf. \cite[Theorem 5.6.13]{Hal13}).
In this paper, we study a similar problem regarding 
the odd class number of $\mathbb{Q}(\sqrt{p_1p_2})$ for distinct primes $p_1\equiv p_2\equiv 3$ $(\text{mod }4)$.

We state the main results of this paper as follows.
\begin{thm}\label{main-thm}
For distinct primes $p_1\equiv p_2\equiv 3$ $(\text{mod }4)$,
$$h(-p_1)h(-p_2)\equiv 
h(p_1p_2)\frac{\Psi(\omega_{p_1p_2})}{n} \quad(\text{mod } 8),$$
where $n=6$ (respectively, $n=2$) if $\min\{p_1,p_2\}>3$ (respectively, otherwise).
\end{thm}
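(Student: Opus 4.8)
The plan is to follow the strategy behind Theorems~\ref{CGPY-thm} and~\ref{Miz-thm}: express $h(-p_1)h(-p_2)$ as a product of two imaginary quadratic $L$-values, identify this product with a value of the genus character $L$-function of the real quadratic order $\mathcal O_{p_1p_2}$, feed that value into the explicit formula of Section~\ref{sec-2}, and then carry out a $2$-adic comparison. To set up, note that $p_1\equiv p_2\equiv3$ $(\text{mod }4)$ forces $D:=p_1p_2\equiv1$ $(\text{mod }4)$, so $D$ is a fundamental discriminant, $\mathcal O_D$ is the maximal order of $K=\QQ(\sqrt{p_1p_2})$, and $\omega_{p_1p_2}=(1+\sqrt{p_1p_2})/2$. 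The factorization $D=(-p_1)\cdot(-p_2)$ into coprime prime discriminants determines a genus character $\chi$ of $\mathcal O_D$, which is the \emph{unique} nontrivial one because $D$ has exactly two prime discriminant divisors. On the one hand, $\chi=\chi_{-p_1}\chi_{-p_2}$ as a character on ideals, so the associated genus character $L$-function factors and, evaluating at $s=0$,
\[
L_{\mathcal O_D}(0,\chi)=L(0,\chi_{-p_1})\,L(0,\chi_{-p_2})=\frac{2h(-p_1)}{w_{-p_1}}\cdot\frac{2h(-p_2)}{w_{-p_2}},
\]
which equals $h(-p_1)h(-p_2)$ when $\min\{p_1,p_2\}>3$ and equals $\tfrac13h(-p_2)$ when $p_1=3$ (using $w_{-3}=6$, $w_{-p}=2$ for $p>3$, and $h(-3)=1$). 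On the other hand, the explicit formula of Section~\ref{sec-2} (cf.\ Remark~\ref{Remark 2.17}) expresses the same $L$-value as a finite sum over the reduced forms of discriminant $D$, weighted by $\chi$ and by partial quotients of the associated continued fraction cycles.

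The core of the argument is to convert this $\chi$-twisted finite sum into the untwisted Hirzebruch sum $\Psi(\omega_{p_1p_2})$ of the principal cycle together with the wide class number $h(p_1p_2)$. I would first record the structural facts needed: every odd prime divisor of $D$ is $\equiv3$ $(\text{mod }4)$, so $N(\varepsilon_D)=+1$, whence $h^+(p_1p_2)=2h(p_1p_2)$ and the period $l=l(\omega_{p_1p_2})$ is even, so that $\Psi(\omega_{p_1p_2})=\sum_{i=0}^{l-1}(-1)^{k+i}v_i$ is the genuine, generically nonzero, alternating sum; and $h(-p_1),h(-p_2),h(p_1p_2)$ are all odd, by the classification~(i)--(iv) recalled above, a fact used repeatedly. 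Writing a reduced form in a cycle as $(Q,P,R)$ with $Q$ prime to $D$, the genus character value is $\chi((Q,P,R))=\bigl(\tfrac{-p_1}{Q}\bigr)$, a function of $Q\bmod D$ only; by tracking $Q$ and this symbol along a continued fraction cycle I would show that the $\chi$-twisted contribution of a cycle is a constant sign times its untwisted alternating sum, and that in the sum over all $h^+(p_1p_2)$ cycles the non-principal contributions either cancel in inverse pairs or collapse onto the ambiguous forms. Incorporating the factor $3$ relating the Hirzebruch sum to the Zagier sum --- present precisely when $3\nmid D$ (cf.\ the footnote above and \cite{Lan76}) --- this should yield an exact identity
\[
n\,h(-p_1)h(-p_2)=h(p_1p_2)\,\Psi(\omega_{p_1p_2})+E,
\]
where $n=6$ if $3\nmid D$ and $n=2$ if $3\mid D$, and $E$ is an explicit integer; the theorem is then equivalent to $E\equiv0$ $(\text{mod }8n)$, i.e.\ modulo $48$ (respectively $16$).

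Proving this congruence for $E$ is where I expect the real work to lie, exactly as it does in \cite{CGPY15} and \cite{CG19}. Since $N(\varepsilon_D)=1$ and $D$ has two genera, the ambiguous reduced forms of discriminant $D$ are few and are governed by the splitting $D=p_1\cdot p_2$, so $E$ can be written in closed form in terms of the middle coefficients of those forms, the period length, and the coefficients $t,u$ of $\varepsilon_D=(t+u\sqrt D)/2$. One would then verify $E\equiv0$ $(\text{mod }8n)$ by a case analysis according to $p_1,p_2$ $(\text{mod }8)$ --- equivalently, according to whether $\bigl(\tfrac{p_1}{p_2}\bigr)=\pm1$ and whether $p_1p_2\equiv1$ or $5$ $(\text{mod }8)$ --- using elementary quadratic-symbol identities and the $2$-adic structure of $\varepsilon_D$, together with the oddness of the three class numbers. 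Finally, the case $\min\{p_1,p_2\}=3$, say $p_1=3$, requires only one change, made already in the first step: $L_{\mathcal O_D}(0,\chi)=\tfrac13h(-p_2)$ because $w_{-3}=6$, while simultaneously $3\mid D$ removes the factor $3$ in the Hirzebruch--Zagier comparison; these two effects together turn $n=6$ into $n=2$, and the remainder of the argument goes through verbatim with $h(-3)=1$, completing the proof.
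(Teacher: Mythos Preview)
Your overall framework is correct and matches the paper: apply the Kaneko--Mizuno--Zagier formula with $d_1=-p_1$, $d_2=-p_2$, $f=1$ to get
\[
h(-p_1)h(-p_2)=\frac{1}{n}\sum_{[\eta]_\sim\in\fX_{p_1p_2}}\chi^{(p_1p_2)}_{-p_1,-p_2}([I(\eta)]^+)\Psi(\eta),
\]
then pair inverse wide ideal classes. You also correctly note the structural facts ($\cN(\varepsilon_D)=1$, even period, odd class numbers, the role of $w_{-3}$). One small correction: inverse classes contribute \emph{equally}, not with opposite signs (this is Lemma~\ref{pf-lem-2}), so the pairing produces a factor of $2$, not a cancellation; and since $h(p_1p_2)$ is odd there is a single ambiguous wide class, the principal one.

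The genuine gap is in your treatment of the error term $E$. You propose to write $E$ ``in closed form in terms of the middle coefficients of those forms, the period length, and the coefficients $t,u$'' and then do a case analysis on $p_1,p_2\pmod 8$. But $E$ is a sum of $h(p_1p_2)-1$ terms, one for each non-principal class, and $h(p_1p_2)$ is unbounded; there is no finite closed form for $E$ depending only on ambiguous-form data and $\varepsilon_D$. What the paper does instead---and this is the heart of the argument---is prove a \emph{termwise} congruence (Lemmas~\ref{pf-lem-3} and~\ref{pf-lem-4}):
\[
\chi^{(p_1p_2)}_{-p_1,-p_2}([I(\eta)]^+)\Psi(\eta)\equiv\Psi(\omega_{p_1p_2})\pmod 8\qquad\text{for every }\eta.
\]
This is established by expressing both Hirzebruch sums through Dedekind sums via the matrix invariant $n_M$ of Section~\ref{sec-3.2}, and then invoking Rademacher's theorem linking $6ks(h,k)\bmod 4$ to the Jacobi symbol $\bigl(\tfrac{h}{k}\bigr)$ (Proposition~\ref{Zag75-prop}\,\eqref{Chu-prop-2}). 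The comparison of Jacobi symbols in turn relies on the arithmetic input that $p_1\varepsilon_{p_1p_2}$ is a perfect square in $\cO_{p_1p_2}$ (Lemma~\ref{ZY-thm}\,\eqref{ZY-2}) together with the $2$-adic structure of $\varepsilon_{p_1p_2}$ (Lemmas~\ref{ZY-thm}\,\eqref{ZY-1} and~\ref{William-thm}). None of these ingredients appear in your sketch, and they are not replaceable by a parity-of-$p_i\bmod 8$ case split. Once the termwise congruence is in hand, the pairing gives $\frac{2}{n}\sum(\text{terms}\equiv0\pmod 8)$, and since $2\parallel n$ and $n\mid\Psi(\eta)$ by \cite{Lan76}, the conclusion follows.
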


\begin{thm}\label{main-thm-f2}
For distinct primes $p_1\equiv p_2\equiv 3$ $(\text{mod }4)$,
$$h(-p_1)h(-p_2)\theta(-p_1,-p_2,2)\equiv h(4p_1p_2) \frac{\Psi(\omega_{4p_1p_2})}{n} \quad(\text{mod } 8).$$
Here $n$ is defined as in Theorem \ref{main-thm} and $\theta(-p_1,-p_2,2)$ is defined by
$$\theta(-p_1,-p_2,2):=(2-\chi_{-p_1}(2))(2-\chi_{-p_2}(2))-(1-\chi_{-p_1}(2))(1-\chi_{-p_2}(2))$$ 
with the Kronecker symbols $\displaystyle\chi_{-p_i}:=\big(\frac{-p_i}{\cdot}\big)$.
\end{thm}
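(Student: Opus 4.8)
The plan is to run, for the conductor-$2$ order $\cO_{4p_1p_2}$ in $\QQ(\sqrt{p_1p_2})$, the same chain of identities that proves Theorem \ref{main-thm}, and then to bootstrap from that theorem. Since $4p_1p_2$ is even we have $\omega_{4p_1p_2}=\sqrt{p_1p_2}$, so this is genuinely the conductor-$2$ case and not a disguised copy of the fundamental one. First I would apply the Kaneko--Mizuno explicit genus character $L$-function formula recalled in Section 2 (the refinement of Zagier's identity) with $\Delta=4p_1p_2$, writing $h(4p_1p_2)\,\Psi(\omega_{4p_1p_2})$ as a fixed rational multiple of a $\ZZ$-linear combination of values at $s=0$ of the genus character $L$-functions attached to $\cO_{4p_1p_2}$.

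Next I would identify those genus characters and factor them. The nontrivial ones correspond to distributing the conductor $2$ over the unique decomposition $p_1p_2=(-p_1)(-p_2)$ of the fundamental discriminant, i.e.\ to the factorizations $4p_1p_2=(-p_1)(-4p_2)$ and $4p_1p_2=(-4p_1)(-p_2)$ into quadratic discriminants; each corresponding $L$-function splits as a product of two Dirichlet $L$-functions, one attached to an imprimitive character $\chi_{-4p_i}$ with $L(s,\chi_{-4p_i})=(1-\chi_{-p_i}(2)2^{-s})\,L(s,\chi_{-p_i})$, and I would also need the imaginary \emph{order} class numbers $h(\cO_{-4p_i})=\tfrac{1}{u_i}(2-\chi_{-p_i}(2))\,h(-p_i)$, where $u_i=[\,\cO_{\QQ(\sqrt{-p_i})}^{\times}:\cO_{-4p_i}^{\times}\,]$ equals $3$ if $p_i=3$ and $1$ otherwise. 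Feeding the class number formula $L(0,\chi_D)=2h(D)/w_D$ for $D<0$ together with these two relations into the Section-2 formula turns $h(4p_1p_2)\,\Psi(\omega_{4p_1p_2})$ into $h(-p_1)h(-p_2)$ times an elementary function of $\chi_{-p_1}(2),\chi_{-p_2}(2)$ and $1/n$; here $n$ is forced to be $6$ when $\min\{p_1,p_2\}>3$ and $2$ otherwise, since $w_{-3}=6$ is three times $w_{-p}=2$. Writing $\chi_i:=\chi_{-p_i}(2)$, the combinatorial heart of the matter is the elementary identity $(2-\chi_1)(2-\chi_2)-(1-\chi_1)(1-\chi_2)=3-\chi_1-\chi_2$: the first product collects the ``order'' normalizations $(2-\chi_i)$ of the conductor-$2$ local factor at $2$ for the two split pieces, while the subtracted product is the correction forced by the imprimitive Euler factors $(1-\chi_i 2^{-s})$, and together they yield $\theta(-p_1,-p_2,2)\,h(-p_1)h(-p_2)$.

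Finally I would reduce modulo $8$. The cleanest route is to compare directly with the conductor-$1$ computation: one writes $h(4p_1p_2)\,\Psi(\omega_{4p_1p_2})/n=\big(h(p_1p_2)\,\Psi(\omega_{p_1p_2})/n\big)\,\theta(-p_1,-p_2,2)\,\epsilon$, where $\epsilon$ is the quotient of the conductor-$2$ unit-index factor on the class-number side by the conductor-$2$ Euler-factor contribution on the $L$-function side, and one checks $\epsilon\equiv1\pmod 8$ using that $h(-p_1),h(-p_2),h(p_1p_2),h(4p_1p_2)$ are all odd and that $\chi_{p_1p_2}(2)=\chi_{-p_1}(2)\chi_{-p_2}(2)$; then Theorem \ref{main-thm} replaces $h(p_1p_2)\,\Psi(\omega_{p_1p_2})/n$ by $h(-p_1)h(-p_2)$ modulo $8$, giving the claim. (Alternatively one may avoid Theorem \ref{main-thm} and rerun its $2$-adic analysis verbatim with $\Delta=4p_1p_2$ throughout.)

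The main obstacle is the middle step: determining precisely which $\ZZ$-linear combination of genus character $L$-values the Section-2 formula attaches to $\Psi(\omega_{4p_1p_2})$ --- in particular getting the sign and weight of the conductor-$2$ correction right so that the combination is $(2-\chi_1)(2-\chi_2)-(1-\chi_1)(1-\chi_2)$ rather than, say, $(2-\chi_1)+(2-\chi_2)$ --- and carefully tracking the unit indices $u_i$ of the orders $\cO_{-4p_i}$, which is where both the dichotomy $n\in\{2,6\}$ and the value of $\theta$ when $p_i=3$ ultimately originate. Once that combination is pinned down, the remaining congruence bookkeeping is of the same $2$-adic flavour already carried out for Theorem \ref{main-thm}.
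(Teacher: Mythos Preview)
Your proposal misidentifies both where $\theta(-p_1,-p_2,2)$ comes from and where the real difficulty lies. The Kaneko--Mizuno--Zagier formula (Theorem~\ref{thm-KM} and \eqref{KM-equality-2}) already delivers, for $\Delta=4p_1p_2$, the \emph{exact} identity
\[
h(-p_1)h(-p_2)\,\theta(-p_1,-p_2,2)=\frac{1}{n}\sum_{[\eta]_\sim\in\fX_{4p_1p_2}}\chi^{(4p_1p_2)}_{-p_1,-p_2}([I(\eta)]^+)\,\Psi(\eta),
\]
with $\theta(-p_1,-p_2,2)$ given verbatim by the product formula in Theorem~\ref{thm-KM}; there is only one genus character in play and nothing to ``distribute'' over factorizations $(-p_1)(-4p_2)$, $(-4p_1)(-p_2)$. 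So your ``main obstacle'' is a non-issue. What the formula does \emph{not} give is $h(4p_1p_2)\Psi(\omega_{4p_1p_2})/n$ itself---that is only one summand times the number of classes, and the whole content of the theorem is that the full sum is congruent to this single contribution modulo~$8$.

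Your ``cleanest route'' fails: there is no identity $h(4p_1p_2)\Psi(\omega_{4p_1p_2})/n=\bigl(h(p_1p_2)\Psi(\omega_{p_1p_2})/n\bigr)\theta\,\epsilon$ with an $\epsilon$ built from unit indices and Euler factors, because $\Psi(\omega_{4p_1p_2})$ and $\Psi(\omega_{p_1p_2})$ come from unrelated continued fractions and are not linked by any Euler-type relation. For instance at $p_1p_2=3\cdot107$ one has $h(p_1p_2)\Psi(\omega_{p_1p_2})/n=27$, $h(4p_1p_2)\Psi(\omega_{4p_1p_2})/n=63$, $\theta=5$, forcing $\epsilon=7/15$. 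The paper instead proves Theorems~\ref{main-thm} and~\ref{main-thm-f2} simultaneously by the approach you relegate to a parenthetical: one shows, via a Dedekind-sum computation (Lemmas~\ref{pf-lem-3} and~\ref{pf-lem-4}), that \emph{every} term $\chi^{(p_1p_2f^2)}_{-p_1,-p_2}([I(\eta)]^+)\Psi(\eta)$ is $\equiv\Psi(\omega_{p_1p_2f^2})\pmod 8$; then pairing each class with its inverse (Lemma~\ref{pf-lem-2}) and using that $h(p_1p_2f^2)$ is odd turns the sum minus $h(p_1p_2f^2)\Psi(\omega_{p_1p_2f^2})$ into twice a sum of terms each divisible by~$8$. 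This Dedekind-sum step, which you do not mention, is where essentially all the work is, and it requires the arithmetic input on $\varepsilon_{p_1p_2f^2}$ from Lemmas~\ref{ZY-thm} and~\ref{William-thm}.
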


\begin{rmk}{\rm
With the same notation as above,
we see that $h(-p_1)h(-p_2)$ is odd by Gauss' genus theory. We also note that $n\mid \Psi(\omega_{p_1p_2})$ and $n\mid\Psi(\omega_{4p_1p_2})$, which can be inferred from \cite[Satz 2]{Lan76} (see also Lemma \ref{unit-str} \eqref{HK-1}).
It is well-known that $\displaystyle\chi_{m}(2)=(-1)^{(m^2-1)/8}$ for an odd integer $m$.}  
\end{rmk}

\vspace{10pt}

The outline of this paper is as follows.
In Section \ref{sec-2}, we recall the Kaneko-Mizuno-Zagier formula and the related notions.
In Section \ref{sec-3}, we review a calculation method of Hirzebruch sums via Dedekinds sums, and we show some facts on the  fundamental units of $\mathcal{O}_{p_1p_2}$ and $\mathcal{O}_{4p_1p_2}$.
In Section \ref{sec-4}, we prove Theorems \ref{main-thm} and \ref{main-thm-f2} according to the cases whether $\varepsilon_{p_1p_2}$ has half-integral coefficients and $f=1$ simultaneously or not.
\vspace{10pt}

\section{The Kaneko-Mizuno-Zagier formula}\label{sec-2}
To state the results of the second named author and Kaneko \cite{KM20}, we recall the genus character on the narrow class group for a general discriminant (for detail, we refer to \cite[Section 3]{BS96} and \cite[Appendix A.2]{KM20}).

By $F=[a,b,c]$ we denote the {\it primitive quadratic form} $F(x,y)=ax^2+bxy+cy^2$
with $a,b,c \in \mathbb{Z}$ such that $\gcd(a,b,c)=1$.
We call $\Delta:=b^2-4ac$ the {\it discriminant} of $F$.
As mentioned in Section \ref{sec-1}, we assume that $\Delta$ is not a square.
The action $F\mapsto AF$ of $A={\footnotesize \begin{pmatrix}\alpha& \beta \\
\gamma &\delta \end{pmatrix}}\in\mathrm{SL}_2(\mathbb{Z})$ is defined by
$(AF)(x,y):=F(\alpha x +\gamma y, \beta x +\delta y)$.
We denote by $[F]$ the {\it equivalence class} of $F$ with respect to the action $F\mapsto AF$ of
$A\in\mathrm{SL}_2(\mathbb{Z})$.
Let $\mathfrak{F}_\Delta$ be the set of all equivalence classes on ``not negative-definite''\footnote{It
means ``positive-definite'' (respectively, ``indefinite'') for $\Delta<0$ (respectively, $\Delta>0$).
}
primitive forms of discriminant $\Delta$.
By $\mathcal{C}_\Delta^+$ (respectively, $\mathcal{C}_\Delta$) we denote the {\it narrow} 
(respectively, {\it wide}) {\it class group} of a quadratic order with discriminant $\Delta$.
We denote by $[\mathfrak{a}]^+$ (respectively, $[\mathfrak{a}]$) the {\it narrow} (respectively, {\it wide}) 
{\it ideal class} of an ideal $\mathfrak{a}$ in $\mathcal{O}_\Delta$.

\begin{prop}[{\cite[Theorem 6.4.2]{Hal13}}]\label{form to narrow}
For a primitive form $F=[a,b,c]$ of discriminant $\Delta$,
let $I(F):=(|a|,\frac{b+\sqrt{\Delta}}{2})\sqrt{\Delta}^{(1-\mathrm{sign}(a))/2}$ be an ideal
of $\mathcal{O}_\Delta$.
The map $\Phi_\Delta: \mathfrak{F}_\Delta \to \mathcal{C}^+_\Delta$ defined by
$$\Phi_\Delta([F]):=[I(F)]^+$$
is bijective.
We define the composition $[F_1]\ast [F_2] \in \mathfrak{F}_\Delta$ by
$$[F_1]\ast [F_2]:=\Phi_\Delta^{-1}(\Phi_\Delta([F_1])\Phi_\Delta([F_2])).$$
With this composition,
$\mathfrak{F}_\Delta$ is an abelian group and $\Phi_\Delta$ is an isomorphism.
\end{prop}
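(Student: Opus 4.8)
The plan is to prove directly that $\Phi_\Delta$ is a bijection; the group assertion is then purely formal, because the composition $\ast$ is \emph{defined} by transporting the group law of $\mathcal{C}_\Delta^+$ through $\Phi_\Delta^{-1}$, so that $\Phi_\Delta$ is an isomorphism by construction once it is known to be bijective. Throughout write $\mathcal{O}_\Delta = \mathbb{Z} + \mathbb{Z}\omega_\Delta$, and note that $\Delta = b^2 - 4ac \equiv b^2 \pmod{4}$ forces $b \equiv \sigma_\Delta \pmod{2}$, so $\tfrac{b+\sqrt{\Delta}}{2} \in \mathcal{O}_\Delta$. The first step is to check that the $\mathbb{Z}$-lattice $M(F) := \mathbb{Z}\,|a| + \mathbb{Z}\,\tfrac{b+\sqrt{\Delta}}{2}$ is an $\mathcal{O}_\Delta$-submodule of $\mathcal{O}_\Delta$: this rests on $\tfrac{b+\sqrt{\Delta}}{2}\cdot\tfrac{b-\sqrt{\Delta}}{2} = ac$, which gives $\bigl(\tfrac{b+\sqrt{\Delta}}{2}\bigr)^{2} = -ac + b\,\tfrac{b+\sqrt{\Delta}}{2} \in M(F)$ since $|a| \mid ac$, together with the rewriting $\omega_\Delta = \tfrac{\sigma_\Delta - b}{2} + \tfrac{b+\sqrt{\Delta}}{2}$, which reduces closure under $\omega_\Delta$ to closure under multiplication by $\tfrac{b+\sqrt{\Delta}}{2}$. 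One then checks that $M(F)$ has index $|a|$ in $\mathcal{O}_\Delta$, and --- the one place primitivity is used --- that $\gcd(a,b,c) = 1$ is exactly the condition making $M(F)$ an \emph{invertible} (proper) ideal of the possibly non-maximal order $\mathcal{O}_\Delta$, rather than of a strictly larger order. Finally, the orientation factor $\sqrt{\Delta}^{(1-\mathrm{sign}(a))/2}$ equals $1$ when $a > 0$ and equals $\sqrt{\Delta} \in \mathcal{O}_\Delta$ (an element of norm $-\Delta < 0$ when $\Delta > 0$) when $a < 0$; in the definite case $\Delta < 0$ it is always $1$, since positive-definiteness forces $a > 0$, so $I(F) = M(F)$ there.

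Next I would prove well-definedness, i.e.\ that $F \mapsto [I(F)]^+$ is constant on $\mathrm{SL}_2(\mathbb{Z})$-orbits. It suffices to test on the generators $S = \bigl(\begin{smallmatrix}0&-1\\1&0\end{smallmatrix}\bigr)$ and $T = \bigl(\begin{smallmatrix}1&1\\0&1\end{smallmatrix}\bigr)$ of $\mathrm{SL}_2(\mathbb{Z})$: in each case one exhibits a nonzero $\lambda \in \mathbb{Q}(\sqrt{\Delta})$ with $M(AF) = \lambda\,M(F)$, computes the sign of the norm $\mathrm{N}(\lambda)$, and checks that, once combined with the change of $\mathrm{sign}(a)$ recorded by the orientation factor, the ratio $I(AF)/I(F)$ is totally positive, whence $[I(AF)]^+ = [I(F)]^+$. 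For $\Delta > 0$ this is precisely where the factor $\sqrt{\Delta}^{(1-\mathrm{sign}(a))/2}$ supplies the needed correction; for $\Delta < 0$ the norm $\mathrm{N}(\lambda)$ is automatically positive and there is nothing to correct.

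For bijectivity I would exhibit the inverse map, which yields surjectivity and injectivity together. Given a class in $\mathcal{C}_\Delta^+$, choose an invertible integral representative $\mathfrak{a}$ and write it with its standard $\mathbb{Z}$-basis $\mathfrak{a} = \mathbb{Z}\,a_0 + \mathbb{Z}\,\tfrac{b_0 + \sqrt{\Delta}}{2}$, with $a_0 = [\mathcal{O}_\Delta : \mathfrak{a}] > 0$; then $c_0 := \tfrac{b_0^{2} - \Delta}{4a_0} \in \mathbb{Z}$, and invertibility of $\mathfrak{a}$ forces $\gcd(a_0, b_0, c_0) = 1$, so $F := [a_0, b_0, c_0]$ is a primitive form of discriminant $\Delta$ with $I(F) = M(F) = \mathfrak{a}$. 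When $\Delta > 0$ and the opposite orientation is wanted, one applies this instead to $\sqrt{\Delta}\,\mathfrak{a}$, which lies in the same wide class but the other narrow class and corresponds to $[-a_0, -b_0, -c_0]$; this gives surjectivity. Uniqueness of the standard basis modulo the $\mathrm{SL}_2(\mathbb{Z})$-action on forms --- equivalently, the fact that two primitive forms with narrowly equivalent ideals are $\mathrm{SL}_2(\mathbb{Z})$-equivalent, obtained by reducing a narrow equivalence $I(F_1) = \mu\,I(F_2)$ with $\mu$ totally positive to an integral change of basis of determinant $+1$ --- gives injectivity. With $\Phi_\Delta$ bijective, the group statement is then immediate.

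The hard part will be the orientation bookkeeping when $\Delta > 0$: one must verify carefully that $\sqrt{\Delta}^{(1-\mathrm{sign}(a))/2}$ makes $\Phi_\Delta$ land in, and biject onto, the \emph{narrow} class group rather than the wide one. This amounts to matching the two narrow classes lying over each wide class against the two forms $[a,b,c]$ and $[-a,-b,-c]$, which are $\mathrm{SL}_2(\mathbb{Z})$-equivalent precisely when $\mathrm{N}(\varepsilon_\Delta) = -1$, i.e.\ precisely when $\mathcal{C}_\Delta^+ = \mathcal{C}_\Delta$. A second, more routine point is the equivalence between primitivity of $F$ and invertibility of $I(F)$ in $\mathcal{O}_\Delta$, which becomes a genuine condition only for conductor $f > 1$ and there relies on the full strength of $\gcd(a,b,c) = 1$.
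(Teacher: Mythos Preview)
The paper does not prove this proposition at all: it is stated with a direct citation to \cite[Theorem~6.4.2]{Hal13} and used as background. There is therefore no ``paper's own proof'' to compare against; the authors simply import the result from Halter-Koch's textbook.

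Your sketch follows the standard textbook route (essentially the one in Halter-Koch): verify that $M(F)$ is a proper $\mathcal{O}_\Delta$-ideal using $\bigl(\tfrac{b+\sqrt{\Delta}}{2}\bigr)^2 = -ac + b\cdot\tfrac{b+\sqrt{\Delta}}{2}$ and primitivity, check $\mathrm{SL}_2(\mathbb{Z})$-invariance on generators, then build the inverse from the standard basis of an invertible ideal. The identification of the orientation factor as the device matching narrow classes to the sign of $a$ is correct and is indeed the main subtlety. One small point to tighten: when you pass to the other narrow class via $\sqrt{\Delta}\,\mathfrak{a}$, the form you recover is not literally $[-a_0,-b_0,-c_0]$ but rather a form $\mathrm{SL}_2(\mathbb{Z})$-equivalent to it (the lattice $(a_0,\tfrac{-b_0+\sqrt{\Delta}}{2})$ is the conjugate of $\mathfrak{a}$, not $\mathfrak{a}$ itself), so the bookkeeping there needs an extra line. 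Otherwise the outline is sound and would, if filled in, reproduce the cited theorem.
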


Using Proposition \ref{form to narrow}, we define the genus character as follows.
\begin{defn}[{\cite[Appendix A.2]{KM20}}]\label{def-genus-char}
Let $\Delta=d_1d_2f^2$ be a discriminant, where $d_1$ and $d_2$ are distinct fundamental discriminants and $f$ is a positive integer.
We define the genus character $\chi_{d_1,d_2}^{(\Delta)}:\mathfrak{F}_\Delta \to \{\pm1\}$ as follows.
We write $d_1=q^\ast_{1}q^\ast_{2}\cdots q^\ast_{m}$ with prime fundamental
discriminants\footnote{For
any odd prime $q_i>0$,
$q^\ast_i:=(-1)^{\frac{q_i-1}{2}}q_i$. If $d_1$ is even, $q_1^\ast=2^\ast\in\{-4,\pm8\}$ is defined by
$d_1=q^\ast_{1}q^\ast_{2}\cdots q^\ast_{m}$.
}
$q^\ast_{i}$'s.
For $[F]\in \mathfrak{F}_\Delta$, we define
$$\chi_{d_1,d_2}^{(\Delta)}([F]):=
\prod_{i=1}^{m}
\chi^{(q^\ast_{i})}([a,b,c]),$$
where $[a,b,c]$ is a representative of $[F]$ and
$$\chi^{(q^\ast_{i})}([a,b,c]):=\left\{
\begin{array}{ll}
\chi_{q^\ast_{i}}(a) &\text{ if }\, \gcd(a,q^\ast_i)=1,\\
\chi_{q^\ast_{i}}(c) &\text{ if }\, \gcd(c,q^\ast_i)=1,
\end{array}\right.$$
with $\chi_{q^\ast_{i}}$ the quadratic character of $\mathcal{O}_{q^\ast_{i}}$.
This map is a well-defined homomorphism
and gives the genus character $\chi_{d_1,d_2}^{(\Delta)}:\mathcal{C}^+_\Delta \to \{\pm 1\}$
through $\Phi_\Delta$ in Proposition \ref{form to narrow}.
\end{defn}

We review quadratic irrationals and their continued fractions (for detail, we refer to \cite[Chapter 1 and Chapter 5.5]{Hal13}).
Let $\mathbb{X}_\Delta:=\{\xi=\frac{b+\sqrt{\Delta}}{2a};\, a\neq 0,b,c,\in\mathbb{Z},\,
\gcd(a,b,c)=1,\,b^2-4ac=\Delta\}$
be the set of all {\it quadratic irrationals} of discriminant $\Delta$.
The action $\xi\mapsto A\xi$ of $A={\footnotesize \begin{pmatrix}\alpha& \beta \\
\gamma &\delta \end{pmatrix}}\in\mathrm{GL}_2(\mathbb{Z})$ is defined by
$A\xi:=(\alpha \xi+\beta)/(\gamma \xi + \delta)$.
Two quadratic irrationals $\xi_1$, $\xi_2\in\mathbb{X}_\Delta$ are {\it properly equivalent} 
($\xi_1\sim_+ \xi_2$) if there exists $M\in\mathrm{SL}_2(\mathbb{Z})$ such that $\xi_1=M\xi_2$.
Two quadratic irrationals $\xi_1$, $\xi_2\in\mathbb{X}_\Delta$ are {\it equivalent} 
($\xi_1\sim \xi_2$) if there exists $M\in\mathrm{GL}_2(\mathbb{Z})$ such that $\xi_1=M\xi_2$.
We denote by $\mathfrak{X}^+_\Delta$
(respectively, $\mathfrak{X}_\Delta$) the set of all proper equivalence (respectively, equivalence)
classes of $\mathbb{X}_\Delta$.

\begin{prop}[{\cite[Theorem 5.5.8]{Hal13}}]\label{irrational to narrow}
For a quadratic irrational $\xi=\frac{b+\sqrt{\Delta}}{2a}$, let
$I(\xi):=(|a|,\frac{b+\sqrt{\Delta}}{2})\sqrt{\Delta}^{(1-\mathrm{sign}(a))/2}$ be an ideal of $\mathcal{O}_\Delta$.
The map $\iota^+_\Delta : \mathfrak{X}^+_\Delta \to \mathcal{C}^+_\Delta$ defined by
$$\iota^+_\Delta ([\xi]_{\sim_+})= [I(\xi)]^+$$
is bijective.
Also, the map $\iota_\Delta:\mathfrak{X}_\Delta \to \mathcal{C}_\Delta$ defined by
$\iota_\Delta([\xi]_{\sim})=[I(\xi)]$ is bijective.
\end{prop}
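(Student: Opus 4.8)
The plan is to deduce the proposition from the bijection $\Phi_\Delta\colon\fF_\Delta\to\cC_\Delta^+$ of Proposition \ref{form to narrow} by inserting the classical dictionary between quadratic irrationals and binary quadratic forms. To $\xi=\frac{b+\sqrt\Delta}{2a}\in\XX_\Delta$ I would attach the form $F_\xi:=[a,b,c]$ with $c=\frac{b^2-\Delta}{4a}$; the two primitivity conditions coincide, so $\xi\mapsto F_\xi$ is a bijection from $\XX_\Delta$ onto the set of all primitive forms of discriminant $\Delta$. For $\Delta<0$ one reads $\XX_\Delta$ with the normalization $\mathrm{Im}\,\xi>0$ (i.e.\ $a>0$), so that the image of this bijection is precisely the set $\fF_\Delta$ of ``not negative-definite'' forms. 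The point that makes the bookkeeping collapse is that the formula for $I(\xi)$ in the proposition and the formula for $I(F_\xi)$ in Proposition \ref{form to narrow} are the same expression in $a$ and $b$; hence $I(\xi)=I(F_\xi)$ and $[I(\xi)]^{+}=\Phi_\Delta([F_\xi])$.

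Next I would check that $\xi\mapsto F_\xi$ is equivariant for the two group actions. Writing $A\xi=\frac{\alpha\xi+\beta}{\gamma\xi+\delta}$ for $A=\left(\begin{smallmatrix}\alpha&\beta\\\gamma&\delta\end{smallmatrix}\right)\in\SL_2(\ZZ)$ and rationalizing, one finds $A\xi=\frac{b'+\sqrt\Delta}{2a'}$ with $a'=F_\xi(\delta,\gamma)$ and a matching $b'$; equivalently $F_{A\xi}=\widetilde A\,F_\xi$, where $\widetilde A$ is the image of $A$ under the automorphism $A\mapsto D(A^{\mathrm{t}})^{-1}D$ of $\SL_2(\ZZ)$ with $D=\mathrm{diag}(1,-1)$. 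Since $A\mapsto\widetilde A$ is a bijection of $\SL_2(\ZZ)$, the correspondence $\xi\mapsto F_\xi$ descends to a bijection $\fX^+_\Delta\to\fF_\Delta$; composing with $\Phi_\Delta$ identifies it with $\iota^+_\Delta$, which proves the first assertion.

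For the wide assertion I would pass from $\SL_2(\ZZ)$ to $\GL_2(\ZZ)$. A determinant $-1$ matrix, say $D$ itself, sends $\xi$ to $-\xi$, hence $F_\xi$ to $-F_\xi=[-a,b,-c]$, so it corresponds to passing to a $\GL_2(\ZZ)$-equivalent form; the resulting identification on $\cC^+_\Delta$ is multiplication by $[(\sqrt\Delta)]^{+}$, the class of a principal ideal whose generator has norm $-\Delta$, and this class generates the kernel of the natural surjection $\cC^+_\Delta\twoheadrightarrow\cC_\Delta$ (it is trivial exactly when $\Delta<0$, consistently with $\cC^+_\Delta=\cC_\Delta$ there). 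Thus $\iota_\Delta$ sits in a commutative square with $\iota^+_\Delta$ along the top, $\iota_\Delta$ along the bottom, and the surjections $\fX^+_\Delta\twoheadrightarrow\fX_\Delta$ and $\cC^+_\Delta\twoheadrightarrow\cC_\Delta$ as the vertical maps, and a short diagram chase upgrades bijectivity of $\iota^+_\Delta$ to bijectivity of $\iota_\Delta$.

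I expect the only delicate point to be the sign bookkeeping: one must fix the conventions (the sign of the middle coefficient of $F_\xi$, left versus right actions, the twist $A\mapsto\widetilde A$) so that they are consistent with those in Proposition \ref{form to narrow}, and one must verify that the factor $\sqrt{\Delta}^{(1-\sign(a))/2}$ behaves correctly in each case --- for $\Delta>0$ it genuinely changes the narrow class because $N(\sqrt\Delta)=-\Delta<0$, while for $\Delta<0$ it is absorbed by the half-plane normalization. An alternative that avoids forms altogether is to prove well-definedness, injectivity and surjectivity of $\iota^+_\Delta$ by hand, writing an ideal in its standard $\ZZ$-basis $\{a,\frac{b+\sqrt\Delta}{2}\}$ and reading off the attached quadratic irrational; this is more self-contained but essentially re-derives Proposition \ref{form to narrow} and is longer.
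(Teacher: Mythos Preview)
The paper does not give its own proof of this proposition; it is stated with a citation to \cite[Theorem 5.5.8]{Hal13} and used as a black box. Your plan---passing through the dictionary $\xi\mapsto F_\xi$ between quadratic irrationals and primitive forms, noting that $I(\xi)$ and $I(F_\xi)$ are literally the same expression, and then invoking Proposition~\ref{form to narrow}---is a standard and correct way to recover the result, and is essentially how Halter--Koch organizes it as well. Your caveat about sign bookkeeping is on point: with the paper's convention $F_\xi=[a,b,c]$ the root of $F_\xi(X,1)$ is $\frac{-b+\sqrt\Delta}{2a}$, not $\xi$, so the equivariance comes out with the twist you describe (or, equivalently, one attaches $[a,-b,c]$ instead); either choice works once fixed consistently. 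The only place to be a little more careful than your sketch is the $\Delta<0$ case, since the paper's $\XX_\Delta$ as written does not impose $a>0$, whereas $\fF_\Delta$ consists of positive-definite forms; but this is harmless here because the paper only uses the proposition for $\Delta>0$.
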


By $\xi'$ we denote the {\it conjugate} of $\xi\in\mathbb{X}_\Delta$.
For $\Delta>0$, let $\mathbb{X}^0_\Delta:=
\{\xi\in\mathbb{X}_\Delta;-1<\xi'<0,\,1<\xi\}$ be the set of all {\it reduced quadratic irrationals}
of discriminant $\Delta$. 
We denote the {\it continued fraction} of $\xi\in\mathbb{R}$ by
$\xi=[u_0,u_1,u_2,\cdots]$ with $u_0\in\mathbb{Z}$ and $u_i\in\mathbb{N}$ for $i\ge1$.
We define the number $\xi_n:=[u_n,u_{n+1},u_{n+2},\cdots]$ for $n\ge0$.
We denote by $\mathcal{N}:\mathbb{Q}(\sqrt{\Delta})\to \mathbb{Q}$ the field norm and 
by $\varepsilon_{\Delta}$ the fundamental unit of $\mathcal{O}_{\Delta}$.
The following proposition provides some properties of continued fractions.
\begin{prop}[{\cite[Theorems 2.2.2, 2.2.9 and 2.3.3]{Hal13}}]\label{prop-cfrac}
Let $\xi=[u_0,u_1,u_2,\cdots]$ be a continued fraction of an irrational $\xi\in\mathbb{R}\setminus \mathbb{Q}$.
\begin{enumerate}[(i)]
\item The sequence $(u_n)_{n\ge 0}$ is ultimately periodic if and only if $\xi$ is a quadratic irrational, and it is periodic if and only if $\xi$ is a reduced quadratic irrational.
\item\label{exist-red} If $\xi\in\mathbb{X}_{\Delta}$, then $\xi_n\in\mathbb{X}_{\Delta}$ and $\xi_n\sim_+ (-1)^n\xi$.
\item\label{all-red-equiv}
Let $\xi=[\hat{v_1},\cdots,\hat{v_k};\overline{v_0,\cdots,v_{l-1}}]\in\mathbb{X}_{\Delta}$.
The $l$ distinct numbers $\xi_k$, $\xi_{k+1}$, $\cdots$, $\xi_{k+l-1}$ are all reduced quadratic irrationals which are equivalent to $\xi$.
\item\label{minus-proper-euqiv}
In the notation above, the period length $l$ is odd
if and only if $\mathcal{N}(\varepsilon_{\Delta})=-1$.
\item\label{inverse-cfrac}(The inverse period) If $\xi=[\overline{v_0,v_1,\cdots,v_{l-1}}]$, then
$-\xi'^{-1}=[\overline{v_{l-1},v_{l-2},\cdots,v_0}]$.
\end{enumerate}
\end{prop}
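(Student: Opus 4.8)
The plan is to reduce every part to the matrix calculus of convergents. For $\xi=[u_0,u_1,\dots]$ I set $p_n=u_np_{n-1}+p_{n-2}$ and $q_n=u_nq_{n-1}+q_{n-2}$ with the usual initial data, and I will use throughout the two identities
\[
\xi=\frac{p_{n-1}\xi_n+p_{n-2}}{q_{n-1}\xi_n+q_{n-2}},\qquad p_{n-1}q_{n-2}-p_{n-2}q_{n-1}=(-1)^n,
\]
so that $\xi=A_n\xi_n$ with $A_n=\begin{pmatrix}p_{n-1}&p_{n-2}\\ q_{n-1}&q_{n-2}\end{pmatrix}\in\GL_2(\ZZ)$ and $\det A_n=(-1)^n$, together with the elementary fact that the $\GL_2(\ZZ)$-action preserves the discriminant of a quadratic irrational. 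Part (ii) is then immediate: discriminant-invariance gives $\xi_n\in\XX_\Delta$, and $A_n\in\SL_2(\ZZ)$ for even $n$ yields $\xi\sim_+\xi_n$, while for odd $n$ I conjugate $A_n^{-1}$ by $\mathrm{diag}(1,-1)$ to exhibit an explicit matrix of determinant $+1$ carrying $(-1)^n\xi=-\xi$ to $\xi_n$, which is a short determinant computation.

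For part (i) I argue the two equivalences separately. If the expansion is eventually periodic, its purely periodic tail $\eta=[\overline{v_0,\dots,v_{l-1}}]$ is a fixed point of an integer M\"obius transformation, hence a quadratic irrational, and $\xi$, being an integer M\"obius image of $\eta$, is one as well. Conversely, if $\xi\in\XX_\Delta$ then by part (ii) every $\xi_n$ lies in $\XX_\Delta$; since $\xi_n>1$ for $n\ge1$ and a short estimate via the displayed formula shows $\xi_n'\in(-1,0)$ for all large $n$, eventually $\xi_n$ is reduced, i.e. $\xi_n\in\XX^0_\Delta$. The inequalities $1<\xi_n$ and $-1<\xi_n'<0$ bound the coefficients of $\xi_n$, so $\XX^0_\Delta$ is finite and pigeonhole forces $\xi_m=\xi_{m'}$ for some $m<m'$, giving eventual periodicity. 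The sharper \emph{purely periodic $\Leftrightarrow$ reduced} dichotomy rests on two lemmas: (A) if $\xi$ is reduced then so is $\xi_1$, immediate from $u_0=\lfloor\xi\rfloor\ge1$ and $\xi_1'=1/(\xi'-u_0)\in(-1,0)$, so reducedness propagates forward; and (B) the map $\xi_n\mapsto\xi_{n+1}$ is injective on reduced irrationals, since the predecessor is recovered by $u_n=\lfloor-1/\xi_{n+1}'\rfloor$ and $\xi_n=u_n+1/\xi_{n+1}$. Eventual periodicity together with the backward injectivity (B) forces pure periodicity; conversely a purely periodic $\xi$ is reduced because $v_0\ge1$ gives $\xi>1$, while part (v) gives $-1/\xi'=[\overline{v_{l-1},\dots,v_0}]>1$, i.e. $-1<\xi'<0$.

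Part (iii) assembles the previous items. The tail $\xi_k$ is purely periodic, hence reduced by the dichotomy of part (i), and each cyclic shift $\xi_{k+i}$ is again purely periodic, hence reduced; the $l$ numbers are pairwise distinct because $\{v_0,\dots,v_{l-1}\}$ is the \emph{basic} period, so the shifted periodic sequences differ and so do their values; and each is equivalent to $\xi$ by part (ii), using $-\xi\sim\xi$ via $\mathrm{diag}(1,-1)$. For part (iv) I take the period matrix $P=\begin{pmatrix}p_{l-1}&p_{l-2}\\ q_{l-1}&q_{l-2}\end{pmatrix}$ attached to the reduced $\xi_k$, which fixes $\xi_k$ and has $\det P=(-1)^l$. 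The stabiliser of $\xi_k$ in $\GL_2(\ZZ)$ is identified with the unit group of $\cO_\Delta$, the minimality of the basic period making $P$ (up to sign) a \emph{generator} rather than a proper power, so $P$ corresponds to $\varepsilon_\Delta$ and $\cN(\varepsilon_\Delta)=\det P=(-1)^l$; thus $l$ is odd iff $\cN(\varepsilon_\Delta)=-1$.

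Part (v) is purely algebraic. Because every partial-quotient matrix $\begin{pmatrix}v&1\\1&0\end{pmatrix}$ is symmetric, the matrix of the reversed period is the transpose $P^{\mathrm{T}}=\begin{pmatrix}p_{l-1}&q_{l-1}\\ p_{l-2}&q_{l-2}\end{pmatrix}$. The fixed-point relation for $P$ yields $q_{l-1}X^2+(q_{l-2}-p_{l-1})X-p_{l-2}=0$ with roots $\xi,\xi'$, while that for $P^{\mathrm{T}}$ yields $p_{l-2}Y^2+(q_{l-2}-p_{l-1})Y-q_{l-1}=0$; the substitution $Y=-1/X$ carries the first equation to the second, so the latter has roots $-1/\xi,-1/\xi'$. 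Since $[\overline{v_{l-1},\dots,v_0}]>1$ while $-1/\xi<0$, it must equal $-1/\xi'$. The one genuinely non-formal step, which I regard as the main obstacle and which is also what makes the stabiliser/generator claim in part (iv) precise, is the completeness in part (iii): that the reduced irrationals in a single equivalence class form \emph{exactly one} cycle. I would prove this confluence by running the reduction from an arbitrary reduced $\eta$ equivalent to $\xi$ and from $\xi_k$ and using (A) and (B) to show the two cycles coincide; everything else is determinant bookkeeping, the finiteness of $\XX^0_\Delta$, and the transpose trick.
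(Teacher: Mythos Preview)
The paper does not prove this proposition; it is quoted from Halter-Koch's textbook (Theorems 2.2.2, 2.2.9 and 2.3.3) with no argument given, so there is no in-paper proof to compare against. Your outline is the standard textbook route and is essentially correct.

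Two small remarks. In part (ii) the word ``conjugate'' is a slip: conjugating $A_n^{-1}$ by $D=\mathrm{diag}(1,-1)$ does not change the determinant. What you actually need is the one-sided product $A_n^{-1}D$, which sends $-\xi$ to $\xi_n$ and has determinant $(-1)^{n+1}=+1$ for odd $n$; your parenthetical ``short determinant computation'' would have caught this immediately.

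Second, you are right to isolate the completeness in (iii) --- that \emph{every} reduced irrational $\GL_2(\ZZ)$-equivalent to $\xi$ already occurs among $\xi_k,\dots,\xi_{k+l-1}$ --- as the only step requiring a genuine idea. Your confluence sketch is on the right track but, as written, it presumes that two $\GL_2(\ZZ)$-equivalent irrationals have continued-fraction tails that eventually coincide (a form of Serret's theorem); once you have that, pure periodicity forces the two cycles to be cyclic shifts of each other, and backward uniqueness (your lemma (B)) finishes. This is precisely what Halter-Koch's Theorem 2.3.3 packages, and it is also what makes the ``$P$ corresponds to $\varepsilon_\Delta$ rather than a proper power'' claim in your part (iv) rigorous.
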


Now we introduce the main results of \cite{KM20}.
\begin{thm}[{\cite[Lemma 10 (2) and Theorem 3]{KM20}}] \label{thm-KM}
Let $\Delta=d_1d_2f^2$ be a discriminant
with distinct negative fundamental discriminants $d_1$ and $d_2$, and a positive integer $f$.
Let $\varepsilon_\Delta$ be the fundamental unit of $\mathcal{O}_\Delta$ and
$\omega(d_i)$ the number of the roots of unity in $\mathcal{O}_{d_i}$.
We define
$$\displaystyle
\theta(d_1,d_2,f):=\prod_{\substack{p\mid f\\p:\text{ prime}}}
\frac{(1-\chi_{d_1}(p))(1-\chi_{d_2}(p)) - p^{m_p-1}(p-\chi_{d_1}(p))(p-\chi_{d_2}(p))}{
1-p},$$
where $m_p$ is a positive integer such that $p^{m_p}$ 
is the highest power of $p$ dividing $f$ if $f>1$.
The empty product is understood as being $1$ if $f=1$.
Then we have $\mathcal{N}(\varepsilon_\Delta)=1$ and 
\begin{eqnarray}\label{KM-equality}
24\frac{h(d_1)h(d_2)}{\omega(d_1)\omega(d_2)} \theta(d_1,d_2,f)
= \sum_{\xi \in\mathbb{X}_\Delta^0}\chi_{d_1,d_2}^{(\Delta)}([I(\xi)]^+)\lfloor \xi \rfloor,
\end{eqnarray}
where $\chi_{d_1,d_2}^{(\Delta)}$ is defined in
Definition \ref{def-genus-char} and $[I(\xi)]^+$ is defined in Proposition \ref{irrational to narrow}.
\end{thm}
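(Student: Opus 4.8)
The plan is to obtain \eqref{KM-equality} from the factorization of the genus character $L$-function of $\mathcal{O}_\Delta$ together with the continued-fraction evaluation of its value at $s=0$, in the style of Zagier \cite{Zag75}. Put $K=\mathbb{Q}(\sqrt\Delta)$; note $\Delta=d_1d_2f^2>0$ because $d_1,d_2<0$. For $\mathfrak{A}\in\mathcal{C}_\Delta^+$ write $\zeta(s,\mathfrak{A}):=\sum_{\mathfrak{a}}(N\mathfrak{a})^{-s}$, the sum running over integral invertible ideals $\mathfrak{a}$ of $\mathcal{O}_\Delta$ with $[\mathfrak{a}]^+=\mathfrak{A}$, so that
$$L(s,\chi_{d_1,d_2}^{(\Delta)}):=\sum_{\mathfrak{A}\in\mathcal{C}_\Delta^+}\chi_{d_1,d_2}^{(\Delta)}(\mathfrak{A})\,\zeta(s,\mathfrak{A}).$$

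\emph{Step 1: the fundamental unit.} First I would show $\mathcal{N}(\varepsilon_\Delta)=1$. If $\mathcal{N}(\varepsilon_\Delta)=-1$, the narrow and wide class groups of $\mathcal{O}_\Delta$ coincide, so the form $[-1,\sigma_\Delta,c]$ of discriminant $\Delta$ (primitive, representing $-1$), which always lies in the same wide class as the principal form, would also lie in the same narrow class; but by Definition \ref{def-genus-char}, since $\gcd(-1,q_i^\ast)=1$ for every prime discriminant $q_i^\ast\mid d_1$, the genus character takes on it the value $\prod_i\chi_{q_i^\ast}(-1)=\chi_{d_1}(-1)=-1$ (as $d_1<0$, the Kronecker character $\chi_{d_1}$ is odd), whereas it is $1$ on the principal form — a contradiction. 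Hence $\mathcal{N}(\varepsilon_\Delta)=1$, and I record two consequences: (a) every period length $l$ occurring among the reduced $\xi\in\mathbb{X}_\Delta^0$ is even, by Proposition \ref{prop-cfrac}\eqref{minus-proper-euqiv}; and (b) writing $\mathfrak{J}$ for the narrow class of $[-1,\sigma_\Delta,c]$, we have $\mathcal{C}_\Delta^+=\mathcal{C}_\Delta\sqcup\mathcal{C}_\Delta\mathfrak{J}$, and the same computation gives $\chi_{d_1,d_2}^{(\Delta)}(\mathfrak{J})=-1$, so the involution $\mathfrak{A}\mapsto\mathfrak{A}\mathfrak{J}$ negates $\chi_{d_1,d_2}^{(\Delta)}$.

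\emph{Step 2: the arithmetic side.} Expanding $L(s,\chi_{d_1,d_2}^{(\Delta)})$ as a Dirichlet series, its coefficients are multiplicative, and at primes $p\nmid f$ they agree with those of $L(s,\chi_{d_1})L(s,\chi_{d_2})$ — the classical factorization of the $L$-function of the genus character of the maximal order $\mathcal{O}_d$ (equivalently, of the induced Galois representation $\chi_{d_1}\oplus\chi_{d_2}$). Comparing the Euler factors at the primes $p\mid f$, where $\mathcal{O}_\Delta$ differs from $\mathcal{O}_d$, produces at each such $p$ a rational function of $p^{-s}$ whose value at $s=0$ is exactly the $p$th factor of $\theta(d_1,d_2,f)$; carrying out this local count is the one genuinely new piece of bookkeeping beyond the classical case $f=1$. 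Using the analytic class number formula and the functional equation for imaginary quadratic characters, $L(0,\chi_{d_i})=2h(d_i)/\omega(d_i)$, whence
$$L(0,\chi_{d_1,d_2}^{(\Delta)})=\theta(d_1,d_2,f)\,\frac{4\,h(d_1)\,h(d_2)}{\omega(d_1)\,\omega(d_2)}.$$

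\emph{Step 3: the continued-fraction side, and conclusion.} For a reduced $\xi\in\mathbb{X}_\Delta^0$ with basic period $\{v_0,\dots,v_{l-1}\}$ (necessarily $l$ even) and $\mathfrak{A}=[I(\xi)]^+$, one evaluates $\zeta(0,\mathfrak{A})$ by writing $\zeta(s,\mathfrak{A})$ as a lattice sum over a fundamental cone for multiplication by $\varepsilon_\Delta$ on $I(\xi)$, subdividing the cone along the continued fraction of $\xi$ (the Hirzebruch resolution of the cusp), and evaluating at $s=0$ by Shintani's method; the resulting Dedekind-sum corrections cancel in the difference of the two narrow classes above a fixed wide class, yielding
$$\zeta(0,\mathfrak{A})-\zeta(0,\mathfrak{A}\mathfrak{J})=\tfrac{1}{6}\sum_{i=0}^{l-1}(-1)^{i}v_i$$
(this is Zagier's theorem \cite{Zag75} for $f=1$, and the argument is uniform in the order). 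By Proposition \ref{prop-cfrac}\eqref{exist-red},\eqref{all-red-equiv} and $\mathcal{N}(\varepsilon_\Delta)=1$, the set $\mathbb{X}_\Delta^0$ is the disjoint union of cycles, one per wide class, and within each cycle the reduced irrationals at even shifts lie in $\mathfrak{A}$ while those at odd shifts lie in $\mathfrak{A}\mathfrak{J}$; since $\chi_{d_1,d_2}^{(\Delta)}(\mathfrak{J})=-1$, the contribution of one cycle to $\sum_{\xi}\chi_{d_1,d_2}^{(\Delta)}([I(\xi)]^+)\lfloor\xi\rfloor$ equals $\chi_{d_1,d_2}^{(\Delta)}(\mathfrak{A})\sum_{i=0}^{l-1}(-1)^{i}v_i$. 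Summing over cycles, using $\mathcal{C}_\Delta^+=\mathcal{C}_\Delta\sqcup\mathcal{C}_\Delta\mathfrak{J}$ and the displayed difference formula, gives
$$L(0,\chi_{d_1,d_2}^{(\Delta)})=\tfrac{1}{6}\sum_{\xi\in\mathbb{X}_\Delta^0}\chi_{d_1,d_2}^{(\Delta)}([I(\xi)]^+)\,\lfloor\xi\rfloor,$$
and equating with Step 2 and multiplying by $6$ yields \eqref{KM-equality}. I expect the main obstacle to be the partial-zeta evaluation in Step 3 — in particular the cancellation of Dedekind-sum terms that makes the difference formula clean; the local determination of the $\theta$-factor in Step 2 is the secondary technical point.
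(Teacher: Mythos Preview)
The paper does not supply its own proof of this theorem: it is quoted verbatim as \cite[Lemma~10~(2) and Theorem~3]{KM20} and used as a black box, with the subsequent text merely rewriting the right-hand side of \eqref{KM-equality} in terms of Hirzebruch sums to obtain \eqref{KM-equality-2}. So there is nothing in the present paper to compare your argument against.

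That said, your sketch is a faithful outline of the approach in the cited reference \cite{KM20} (which in turn extends Zagier \cite{Zag75}): the genus-character argument for $\mathcal{N}(\varepsilon_\Delta)=1$, the factorization $L(s,\chi_{d_1,d_2}^{(\Delta)})=L(s,\chi_{d_1})L(s,\chi_{d_2})\cdot(\text{local correction at }p\mid f)$ evaluated at $s=0$, and the Shintani/Hirzebruch evaluation of the partial zeta values via continued fractions are exactly the three ingredients used there. Your identification of the two genuinely technical points --- the local bookkeeping that produces the $\theta(d_1,d_2,f)$ factor and the Dedekind-sum cancellation in the difference $\zeta(0,\mathfrak{A})-\zeta(0,\mathfrak{A}\mathfrak{J})$ --- is accurate; these are precisely what \cite{KM20} works out beyond the classical $f=1$ case.
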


Under the same assumption as in Theorem \ref{thm-KM},
we will restate the equality \eqref{KM-equality}
in terms of $\mathfrak{X}_\Delta$ and the Hirzebruch sum $\Psi$.
Let $\eta:=\frac{b+\sqrt{\Delta}}{2a}\in \mathbb{X}_\Delta$. 
By Propositions \ref{irrational to narrow}, \ref{form to narrow} and $d_1<0$,
we have
\begin{eqnarray}\label{chi-minus-eq}
\chi_{d_1,d_2}^{(\Delta)}([I(-\eta)]^+)&=&\chi_{d_1,d_2}^{(\Delta)}([[-a,b,-c]]) \nonumber\\
&=&\chi_{d_1}(-1)\chi_{d_1,d_2}^{(\Delta)}([[a,b,c]]) \nonumber\\
&=&-\chi_{d_1,d_2}^{(\Delta)}([I(\eta)]^+).
\end{eqnarray}
By Proposition \ref{prop-cfrac} \eqref{exist-red}, \eqref{all-red-equiv} and \eqref{minus-proper-euqiv},
for any $\eta=[\hat{v_1},\cdots,\hat{v_k};\overline{v_0,\cdots,v_{2m-1}}]\in \mathbb{X}_\Delta$, we have
$\{\xi \in \mathbb{X}^0_\Delta \mid \xi \sim_+ (-1)^k\eta \}=\{\eta_{k},\eta_{k+2},\cdots,\eta_{k+2m-4},\eta_{k+2m-2}\}$
and
$\{\xi \in \mathbb{X}^0_\Delta \mid \xi \sim_+ (-1)^{k+1}\eta \}=\{\eta_{k+1},\eta_{k+3},\cdots,\eta_{k+2m-3},\eta_{k+2m-1}\}$.
Hence we have
\begin{eqnarray*}
\sum_{\substack{{\xi \in\mathbb{X}_\Delta^0}\\{\xi\sim \eta}}}
\chi_{d_1,d_2}^{(\Delta)}([I(\xi)]^+)\lfloor \xi \rfloor
&=&\chi_{d_1,d_2}^{(\Delta)}([I(\eta_k)]^+)\Psi(\eta_{k})\\
&=&\left((-1)^{k}\chi_{d_1,d_2}^{(\Delta)}([I(\eta)]^+)\right)
\left((-1)^{k}\Psi(\eta)\right)
\end{eqnarray*}
since $\lfloor\eta_{k+i}\rfloor=v_i$ for $0\le i \le 2m-1$ and \eqref{chi-minus-eq}.
Therefore, we get
\begin{eqnarray}\label{KM-equality-2}
24\frac{h(d_1)h(d_2)}{\omega(d_1)\omega(d_2)} \theta(d_1,d_2,f)
=\sum_{[\eta]_\sim\in\mathfrak{X}_\Delta}\chi_{d_1,d_2}^{(\Delta)}([I(\eta)]^+)\Psi(\eta),
\end{eqnarray}
which we call the {\it Kaneko-Mizuno-Zagier formula}.

\begin{rmk}\label{Remark 2.17}
{\rm
Zagier \cite[(23)]{Zag75} proved
the equality \eqref{KM-equality-2} in the case where $\gcd(d_1,d_2)=1$ and $f=1$.
We remark that Lu \cite{Lu91} obtained a different kind of formula for $h(d_1)h(d_2)$
in the case where $d_1\mid d_2$ and $f=1$ (cf. \cite[Theorem 2.12]{CG19}).
}
\end{rmk}
\vspace{10pt}

\section{Preliminary Lemmas}\label{sec-3}

In this section, we show some lemmas required in the proofs of Theorems \ref{main-thm} and \ref{main-thm-f2}.
\subsection{Hirzebruch sums via Dedekind sums}\label{sec-3.2}

To introduce one of the calculation methods of Hirzebruch sums,
we review some properties of Dedekind sums (for detail, we refer to \cite{RG72}).

\begin{defn}\label{def-DD-sum}
Let $h$ and $k$ be integers such that $\gcd(h,k)=1$ and $k\ge1$.
Let
$$s(h,k):=\sum_{m=1}^{k}\Big(\Big(\frac{hm}{k}\Big)\Big) \Big(\Big(\frac{m}{k}\Big)\Big),$$
where
$$((x)):=\left\{\begin{array}{ll}
x-\lfloor x \rfloor -\tfrac{1}{2} &\text{if }\, x\not\in\mathbb{Z},\\
0 &\text{if }\, x\in\mathbb{Z}.
\end{array}\right.
$$
We call this number $s(h,k)$ the Dedekind sum.
\end{defn}

\begin{prop}\label{Zag75-prop}
Let $h$ and $k$ be integers such that $\gcd(h,k)=1$ and $k\ge1$.
\begin{enumerate}[(i)]
\item\label{h and -h} $s(-h,k)=-s(h,k)$.
\item\label{h-mod-k}$s(h,k)$ depends only on $h$ $(\text{mod }k)$.
\item\label{DD-reciprocity}(The reciprocity law)
If $h\ge1$, then
$\displaystyle s(h,k)+s(k,h)=\tfrac{h^2+k^2+1}{12hk}-\tfrac{1}{4}$.
\item\label{Chu-prop-1}
For an integer $h'$ such that $hh'\equiv 1$ $(\text{mod }k)$,
$s(h,k)=s(h',k)$.
\item\label{Zag75-prop-ii} $6ks(h,k)$ is an integer.
\item\label{Chu-prop-2}
If $k$ is odd, then
$\Big(\frac{h}{k}\Big)=(-1)^{\frac{1}{2}\left(\frac{k-1}{2}-6ks(h,k)\right)},$ where
$\Big(\frac{\phantom{a}}{\phantom{a}}\Big)$ is the {\it Jacobi symbol}.
\end{enumerate}
\end{prop}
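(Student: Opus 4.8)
The plan is to obtain (i), (ii) and (iv) by direct manipulation of the defining sum, to quote the classical reciprocity law for (iii), to deduce (v) from an elementary computation modulo $k$, and to reduce (vi) to quadratic reciprocity for prime moduli.

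Parts (i) and (ii) are immediate from the two elementary properties $((-x))=-((x))$ and $((x+1))=((x))$ of the sawtooth function: sending $h\mapsto-h$ negates each factor $((hm/k))$ while fixing $((m/k))$, and sending $h\mapsto h+k$ replaces $((hm/k))$ by $((hm/k+m))=((hm/k))$. For (iv), the substitution $m=h'n$ permutes the residue classes modulo $k$ (as $\gcd(h',k)=1$); then $((hm/k))=((hh'n/k))=((n/k))$ since $hh'\equiv1\pmod k$, while $((m/k))=((h'n/k))$, so the sum defining $s(h,k)$ becomes $\sum_n((n/k))((h'n/k))=s(h',k)$.

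Part (iii) is the classical Dedekind reciprocity law, for which I would cite \cite{RG72}; the shortest self-contained proofs are Rademacher's finite-Fourier (cotangent) identity or the count of lattice points under the segment from the origin to $(h,k)$, and this is the only genuinely non-formal input. For (v), let $a_m$ be the least positive residue of $hm$ modulo $k$, so that for $1\le m\le k-1$ we have $((hm/k))=(2a_m-k)/(2k)$ and $((m/k))=(2m-k)/(2k)$, while the term $m=k$ vanishes. Since $m\mapsto a_m$ permutes $\{1,\dots,k-1\}$ we have $\sum_m a_m=\sum_m m=k(k-1)/2$, and expanding the product gives $4k^2 s(h,k)=4\sum_{m=1}^{k-1}a_m m-k^2(k-1)$. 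From $a_m\equiv hm\pmod k$ we get $6\sum_m a_m m\equiv 6h\sum_{m=1}^{k-1}m^2=h(k-1)k(2k-1)\equiv0\pmod k$, so $k\mid6\sum_m a_m m$; since $3k(k-1)/2$ is an integer, it follows that $6ks(h,k)=\tfrac{6}{k}\sum_m a_m m-\tfrac{3k(k-1)}{2}\in\mathbb{Z}$.

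Finally (vi). By (v) the exponent involves the integer $N:=6ks(h,k)$, and a short unwinding shows the assertion is equivalent to the single congruence $N\equiv\tfrac{k-1}{2}-1+\big(\tfrac hk\big)\pmod 4$ (which already forces $\tfrac{k-1}{2}-N$ to be even, so the exponent is an integer). I would prove this congruence by checking that it is compatible with the Euclidean descent furnished by the reciprocity law (iii) together with the base value $s(h,1)=0$, thereby reducing to the case $k=p$ an odd prime; for such $p$ one evaluates $s(h,p)$ to the required $2$-adic accuracy from $s(h,p)=\frac{1}{4p}\sum_{j=1}^{p-1}\cot\frac{\pi j}{p}\cot\frac{\pi hj}{p}$, in which $\big(\tfrac hp\big)$ enters through the quadratic Gauss sum, and quadratic reciprocity then closes the induction; alternatively (iv) and (vi) are recorded in \cite{CGPY15} and may be cited directly. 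I expect this last step — carrying the $2$-adic bookkeeping through the recursion so that it reproduces the Jacobi symbol — to be the main obstacle.
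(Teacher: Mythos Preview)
Your direct arguments for (i), (ii), (iv) and (v) are correct, and your sketch for (vi) via Euclidean descent and quadratic reciprocity is a legitimate route (this is in fact how Rademacher--Grosswald prove it). However, you should know that the paper's own proof of this proposition consists entirely of pointers to the Rademacher--Grosswald monograph \cite{RG72}: (i) and (ii) are said to follow from the definition, (iii) is Theorem~1 of Chapter~2 there, and (iv), (v), (vi) are respectively equation~(33c), Theorem~2, and Theorem~4 of Chapter~3. So you are doing considerably more work than the paper does.

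The practical difference is this: your approach is self-contained for (iv) and (v), which is pleasant, but for (vi) you correctly identify the $2$-adic bookkeeping through the reciprocity recursion as the main obstacle and leave it unfinished. Since this result is exactly Theorem~4 in \cite[Chapter~3]{RG72}, the cleanest fix is simply to cite that (rather than \cite{CGPY15}, which is a secondary source for this fact). If you do want to carry out the induction yourself, note that the key identity from (iii) gives, for odd coprime $h,k\ge1$,
\[
6ks(h,k)+6hs(k,h)=\frac{h^2+k^2+1}{2hk}-\frac{3hk}{2}+\frac{h^2+k^2}{2}\cdot\frac{1}{hk}\cdot\text{(integer correction)},
\]
and one must track this modulo~$4$ against the reciprocity law $\bigl(\tfrac{h}{k}\bigr)\bigl(\tfrac{k}{h}\bigr)=(-1)^{\frac{h-1}{2}\cdot\frac{k-1}{2}}$; the case of even $h$ requires separate handling via (i), (ii) and the supplementary law for $\bigl(\tfrac{2}{k}\bigr)$. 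This is all in \cite{RG72}, and reproducing it here would be disproportionate to the role the proposition plays in the paper.
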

\begin{proof}\,
\eqref{h and -h} and \eqref{h-mod-k} follow from Definition \ref{def-DD-sum}.
We refer \eqref{DD-reciprocity} to Theorem 1 in \cite[Chapter 2]{RG72}.
We refer \eqref{Chu-prop-1}, \eqref{Zag75-prop-ii} and \eqref{Chu-prop-2}
to the equation (33c), Theorem 2 and Theorem 4 in \cite[Chapter 3]{RG72}, respectively.
\end{proof}

We define a subset $\fM$ of $\SL_2(\mathbb{Z})$ by
$$\fM:=\left\{M :
M={\footnotesize \begin{pmatrix}x& y \\ z &w\end{pmatrix}} \in\SL_2(\mathbb{Z})
\,\text{ such that }\, z\neq 0  \right\}.$$
For any $M={\footnotesize \begin{pmatrix}x& y \\ z &w\end{pmatrix}} \in\fM$, we define
$$n_M:=\frac{x+w}{z}-\sign(z)(3+12s(w,|z|)),$$
where $\sign(z):=z/|z|$.
The following three lemmas will be used to prove Lemmas \ref{pf-lem-3} and \ref{pf-lem-4} in Section \ref{sec-4}.
\begin{lem}[{\cite[Lemma 8]{KM20}}]\label{KM-lem-8}
Let $M={\footnotesize \begin{pmatrix}x&y\\z&w\end{pmatrix}}\in\fM$
such that $x\neq 0$, and let
$M'={\footnotesize \begin{pmatrix}k&-1\\1&0\end{pmatrix}} M$
for some integer $k$.
Then it follows that
$$
n_{M'}=\left\{
\begin{array}{ll}
k+9+n_M &\text{ if }\, x<0 \,\text{ and }\, z>0,\\
k-3+n_M &\text{ if }\, x>0 \,\text{ or }\, z<0.
\end{array}
\right.
$$
\end{lem}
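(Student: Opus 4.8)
The plan is to unwind the definition of $n_{M'}$ directly in terms of the entries of $M'$, and then reduce everything to $n_M$ using the sign relations among the entries of $M$ together with the basic properties of Dedekind sums from Proposition \ref{Zag75-prop}. First I would write $M={\footnotesize\begin{pmatrix}x&y\\z&w\end{pmatrix}}$ and compute
$$M'={\footnotesize\begin{pmatrix}k&-1\\1&0\end{pmatrix}}{\footnotesize\begin{pmatrix}x&y\\z&w\end{pmatrix}}={\footnotesize\begin{pmatrix}kx-z&ky-w\\x&y\end{pmatrix}}.$$
So the lower-left entry of $M'$ is $x$ (hence nonzero, by hypothesis), the lower-right entry is $y$, and the trace-type numerator $x'+w'$ for $M'$ equals $(kx-z)+y$. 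Thus
$$n_{M'}=\frac{kx-z+y}{x}-\sign(x)\bigl(3+12\,s(y,|x|)\bigr)=k+\frac{y-z}{x}-\sign(x)\bigl(3+12\,s(y,|x|)\bigr).$$
Since $\det M'=1$ forces $(kx-z)y-(ky-w)x=1$, i.e. $wx-zy=1$, we get $y-z = y - z$; more usefully $zy \equiv -1 \pmod{|x|}$ and $wx\equiv 1$, which will let me relate $s(y,|x|)$ to $s(w,|x|)$.

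The key arithmetic input is the comparison of $s(y,|x|)$ with $s(w,|x|)$. From $wx - zy = 1$ I would like to express the needed Dedekind sum in terms of data that already appears in $n_M = \frac{x+w}{z} - \sign(z)(3+12\,s(w,|z|))$. The cleaner route is to invoke the reciprocity law (Proposition \ref{Zag75-prop}\eqref{DD-reciprocity}) for $s(w,|z|)$ and for $s(y,|x|)$, or alternatively to use that $s(h,k)$ depends only on $h \bmod k$ (part \eqref{h-mod-k}), the antisymmetry $s(-h,k)=-s(h,k)$ (part \eqref{h and -h}), and the inversion invariance $s(h,k)=s(h',k)$ when $hh'\equiv 1 \pmod k$ (part \eqref{Chu-prop-1}). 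Concretely: modulo $|x|$ we have $zy\equiv -1$, so $y \equiv -z^{-1} \pmod{|x|}$, giving $s(y,|x|) = s(-z^{-1},|x|) = -s(z^{-1},|x|) = -s(z,|x|)$. Then reciprocity applied to the pair $(|z|,|x|)$ converts $s(z,|x|)$ into $s(w,\cdot)$-type terms plus an explicit rational function of $x$ and $z$, and after multiplying through by $12$ and collecting, the fractional pieces $\frac{y-z}{x}$ and the reciprocity remainder $\frac{\text{(squares)}+1}{12\,|x||z|}$ must combine, using $wx-zy=1$, into exactly $\frac{x+w}{z}$ plus an integer. The sign bookkeeping — tracking $\sign(x)$ versus $\sign(z)$, and the $\pm\tfrac14$ and $\pm3$ constants through the two cases $x<0,z>0$ and ($x>0$ or $z<0$) — is where the two different outputs ($k+9+n_M$ versus $k-3+n_M$) come from; the difference $9-(-3)=12$ should emerge precisely as the jump $-2\cdot\sign(\cdot)\cdot 3$ type term when the relative sign of $x$ and $z$ flips.

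The main obstacle I anticipate is the sign-and-constant bookkeeping in the reciprocity step, not any conceptual difficulty: one has to be careful that $|x|$ and $|z|$ (rather than $x,z$) enter the Dedekind sums, that $\sign(x)$ in $n_{M'}$ is handled consistently in each of the two cases, and that the $-\tfrac14$ terms from reciprocity contribute the right multiple of $12$. I would organize the computation as: (1) record $M'$ and the identity $wx-zy=1$; (2) reduce $s(y,|x|)$ to $\pm s(z,|x|)$ via parts \eqref{h and -h}, \eqref{h-mod-k}, \eqref{Chu-prop-1}; (3) apply reciprocity \eqref{DD-reciprocity} to rewrite $12\,s(z,|x|)$ in terms of $12\,s(w,|z|)$-compatible data (using $w \equiv$ something mod $|z|$ from $wx\equiv 1 \pmod{|z|}$, i.e. $w \equiv x^{-1}$, so $s(x,|z|)=s(w,|z|)$); (4) substitute into $n_{M'}$, clear denominators, and verify the fractional parts telescope to $\frac{x+w}{z}$; (5) split into the two sign cases and read off the constants $+9$ versus $-3$. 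Since this is the lemma feeding the later computations of $n_M$ along continued-fraction steps (Lemmas \ref{pf-lem-3} and \ref{pf-lem-4}), I would state it with the normalization exactly as above so that iterating it adds the partial quotients $v_i$ with the correct signs.
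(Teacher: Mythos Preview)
The paper does not give its own proof of this lemma: it is quoted verbatim from \cite[Lemma~8]{KM20} and used as a black box. So there is nothing in the present paper to compare your argument against.

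That said, your plan is correct and will go through cleanly. Carrying out your steps (2)--(4): from $xw-zy=1$ one gets $s(y,|x|)=-s(z,|x|)$ via Proposition~\ref{Zag75-prop}\,\eqref{h and -h},\eqref{h-mod-k},\eqref{Chu-prop-1}; writing $s(z,|x|)=\sign(z)\,s(|z|,|x|)$, applying reciprocity \eqref{DD-reciprocity} to the pair $(|z|,|x|)$, and then using $s(|x|,|z|)=\sign(x)\,s(x,|z|)=\sign(x)\,s(w,|z|)$, one finds after substitution that the rational pieces $\frac{y-z}{x}+\frac{x^2+z^2+1}{xz}-\frac{x+w}{z}$ collapse to $0$ (precisely because $xw-zy=1$), leaving
\[
n_{M'}-n_M \;=\; k \;-\; 3\,\sign(x)\;-\;3\,\sign(x)\sign(z)\;+\;3\,\sign(z),
\]
which equals $k-3$ when $x>0$ (independent of $\sign(z)$), equals $k+9$ when $x<0$ and $z>0$, and equals $k-3$ when $x<0$ and $z<0$. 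This matches the statement exactly, so your sign bookkeeping in step~(5) will come out as you expect.
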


\begin{lem}\label{translation-matrix}
Let $M={\footnotesize \begin{pmatrix}x&y\\z&w\end{pmatrix}}\in\fM$, and let
$M'={\footnotesize \begin{pmatrix}1&k\\0&1\end{pmatrix}}
M
{\footnotesize \begin{pmatrix}1&-k\\0&1\end{pmatrix}}$\,
for some integer $k$.
Then one has\, $n_{M'}=n_M$.
\end{lem}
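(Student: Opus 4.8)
\textbf{Proof proposal for Lemma \ref{translation-matrix}.}

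The plan is to compute $M'$ explicitly and then apply the definition of $n_{(\cdot)}$ together with the elementary properties of Dedekind sums from Proposition \ref{Zag75-prop}. First I would carry out the matrix multiplication: writing $T^k={\footnotesize\begin{pmatrix}1&k\\0&1\end{pmatrix}}$, conjugation by $T^k$ sends $M={\footnotesize\begin{pmatrix}x&y\\z&w\end{pmatrix}}$ to
$$M'=T^k M T^{-k}={\footnotesize\begin{pmatrix}x+kz & \ast \\ z & w-kz\end{pmatrix}},$$
where the upper-right entry will not matter. The key observations are that the lower-left entry $z$ is unchanged (so $M'\in\fM$, $\sign(z)$ is the same, and $|z|$ is the same), the trace is preserved since $x+kz+w-kz=x+w$, and the lower-right entry changes from $w$ to $w-kz\equiv w\ (\text{mod }|z|)$.

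Now I would plug into the definition. We have
$$n_{M'}=\frac{(x+kz)+(w-kz)}{z}-\sign(z)\bigl(3+12\,s(w-kz,|z|)\bigr)=\frac{x+w}{z}-\sign(z)\bigl(3+12\,s(w-kz,|z|)\bigr).$$
By Proposition \ref{Zag75-prop}\eqref{h-mod-k}, $s(h,k)$ depends only on $h$ modulo $k$; since $w-kz\equiv w\ (\text{mod }|z|)$, we get $s(w-kz,|z|)=s(w,|z|)$. Therefore $n_{M'}=\frac{x+w}{z}-\sign(z)\bigl(3+12\,s(w,|z|)\bigr)=n_M$, as claimed.

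There is essentially no obstacle here: the only mild point of care is confirming that the argument of the Dedekind sum still satisfies the coprimality hypothesis $\gcd(w-kz,|z|)=1$ needed for $s(w-kz,|z|)$ to be defined, but this is immediate since $M'\in\SL_2(\ZZ)$ forces its entries to satisfy $\det M'=1$, hence $\gcd(z,w-kz)=1$ (equivalently one may note $\gcd(w-kz,z)=\gcd(w,z)=1$ because $\det M=xw-yz=1$). Thus the lemma is a direct computation once the conjugation is written out.
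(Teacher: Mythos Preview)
Your argument is correct and is essentially identical to the paper's proof: both compute $M'$ explicitly, note that the lower-left entry $z$ and the trace are unchanged, and then invoke Proposition~\ref{Zag75-prop}\eqref{h-mod-k} to conclude $s(w-kz,|z|)=s(w,|z|)$. Your extra remark on the coprimality of $w-kz$ and $|z|$ is a nice clarification that the paper omits.
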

\begin{proof}\,
We have $M'={\footnotesize
\begin{pmatrix}x+kz&-kx+y-k^2z+kw\\z&-kz+w\end{pmatrix}}\in\fM$ and
$n_{M'}=\frac{x+w}{z}-\sign(z)(3+12s(-kz+w,|z|))$.
By Proposition \ref{Zag75-prop} \eqref{h-mod-k} we have $n_{M'}=n_M$.
\end{proof}

\begin{lem}\label{minus-n_M}
Let $M={\footnotesize \begin{pmatrix}x&y\\z&w\end{pmatrix}}\in\fM$
such that $x>0$ and $y\neq0$. Let
$M'={\footnotesize \begin{pmatrix}0&1\\1&0\end{pmatrix}}
M
{\footnotesize \begin{pmatrix}0&1\\1&0\end{pmatrix}}$.
Then we have $n_{M'}=-n_M$.
\end{lem}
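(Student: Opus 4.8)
The plan is to compute $M'$ explicitly and then match the terms in the definition of $n_{M'}$ against those of $-n_M$, using the properties of Dedekind sums collected in Proposition \ref{Zag75-prop}. Writing $J={\footnotesize\begin{pmatrix}0&1\\1&0\end{pmatrix}}$, we have $JMJ={\footnotesize\begin{pmatrix}w&z\\y&x\end{pmatrix}}$, so with $M'={\footnotesize\begin{pmatrix}x'&y'\\z'&w'\end{pmatrix}}$ the entries are $x'=w$, $y'=z$, $z'=y$, $w'=x$; note $z'=y\neq0$ by hypothesis, so $M'\in\fM$. Then
$$n_{M'}=\frac{x'+w'}{z'}-\sign(z')\bigl(3+12s(w',|z'|)\bigr)=\frac{w+x}{y}-\sign(y)\bigl(3+12s(x,|y|)\bigr).$$

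The first task is to relate $\frac{x+w}{y}$ to $\frac{x+w}{z}$. Since $M\in\SL_2(\ZZ)$ we have $xw-yz=1$, hence $xw\equiv 1\pmod{|y|}$ and $xw\equiv1\pmod{|z|}$; in particular $w\equiv x^{-1}\pmod{|y|}$ and similarly modulo $|z|$. The key identity I would extract is that $(x+w)/z$ and $(x+w)/y$ differ in a way controlled by Dedekind-sum reciprocity. Concretely, I would apply Proposition \ref{Zag75-prop}\eqref{DD-reciprocity} to the pair $(x,|y|)$ — using that $x>0$ — to write $s(x,|y|)+s(|y|,x)=\tfrac{x^2+y^2+1}{12x|y|}-\tfrac14$, and likewise handle $s(w,|z|)$ via the relation $wx\equiv1\pmod{|z|}$ together with Proposition \ref{Zag75-prop}\eqref{Chu-prop-1}, which gives $s(w,|z|)=s(x,|z|)$ (or, going the other way, $s(x,|y|)=s(w,|y|)$ after also using \eqref{h-mod-k}). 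Combining these, the rational pieces $\tfrac{x^2+y^2+1}{12x|y|}$ and the analogous term with $z$ should assemble, using $xw-yz=1$, into exactly the difference $\tfrac{x+w}{z}-\tfrac{x+w}{y}$, after which the $-\tfrac14$ terms and the $\pm3$ terms cancel in the right pattern to yield $n_{M'}=-n_M$. The bookkeeping of the sign factors $\sign(y)$ versus $\sign(z)$ is where care is needed: since $xw-yz=1>0$, when $|y|,|z|$ are both large the product $yz$ is positive, so $\sign(y)=\sign(z)$, and the two $3+12s(\cdot)$ blocks should come out as negatives of each other.

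The main obstacle I anticipate is precisely this reciprocity juggling: there are two Dedekind sums ($s(w,|z|)$ in $n_M$ and $s(x,|y|)$ in $n_{M'}$) attached to \emph{different} moduli, $|z|$ and $|y|$, and they must be linked through the $\SL_2$ relation $xw-yz=1$ rather than through a single application of the reciprocity law. The cleanest route is probably to reduce to the case already handled in the paper: since $M'=JMJ$ and $J={\footnotesize\begin{pmatrix}k&-1\\1&0\end{pmatrix}}$ is not of the form appearing in Lemma \ref{KM-lem-8}, a direct reduction is unavailable, so instead I would factor $J$ through generators — e.g. $J={\footnotesize\begin{pmatrix}1&1\\0&1\end{pmatrix}}{\footnotesize\begin{pmatrix}0&-1\\1&0\end{pmatrix}}{\footnotesize\begin{pmatrix}1&1\\0&1\end{pmatrix}}{\footnotesize\begin{pmatrix}-1&0\\0&-1\end{pmatrix}}$ — and track $n$ through each factor using Lemmas \ref{KM-lem-8} and \ref{translation-matrix} together with the obvious invariance of $n_M$ under $M\mapsto -M$; but the hypotheses $x>0$, $y\neq0$ must be checked to persist (or be appropriately modified) at each intermediate step, which is delicate. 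If that approach proves awkward, I fall back on the direct computation above, where the one genuinely nontrivial point is verifying the rational identity
$$\frac{x+w}{z}+\frac{x+w}{y}=12\,\sign(y)\bigl(s(x,|y|)+s(w,|z|)\bigr)+(\text{explicit correction}),$$
which follows from two applications of Proposition \ref{Zag75-prop}\eqref{DD-reciprocity} and \eqref{Chu-prop-1} and the relation $xw-yz=1$.
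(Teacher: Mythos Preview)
Your direct-computation route is essentially the paper's proof, and it goes through cleanly once you see the one move you have not yet pinned down. After writing $s(w,|z|)=s(x,|z|)$ via Proposition~\ref{Zag75-prop}\eqref{Chu-prop-1}, apply reciprocity \emph{to both} pairs $(x,|z|)$ and $(x,|y|)$ --- this is exactly where the hypothesis $x>0$ is used --- so that each of $n_M$ and $n_{M'}$ is expressed with a Dedekind sum of common modulus $x$:
\[
n_M=\frac{x+w}{z}+12\,s(z,x)-\frac{x^2+z^2+1}{xz},\qquad
n_{M'}=\frac{x+w}{y}+12\,s(y,x)-\frac{x^2+y^2+1}{xy}.
\]
(Here Proposition~\ref{Zag75-prop}\eqref{h and -h} absorbs the sign $\sign(z)$, resp.\ $\sign(y)$, into the change $|z|\mapsto z$, resp.\ $|y|\mapsto y$, so your worry about comparing $\sign(y)$ with $\sign(z)$ disappears entirely.) Now the $\SL_2$ relation $xw-yz=1$ gives $z\equiv -y^{-1}\pmod{x}$, so by \eqref{Chu-prop-1} and \eqref{h and -h} one has $s(z,x)=-s(y,x)$; adding the two displayed expressions and using $xw-yz=1$ once more yields $n_M+n_{M'}=\frac{(y+z)(xw-yz-1)}{xyz}=0$. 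That is the whole argument.

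Your fallback route of factoring $J={\footnotesize\begin{pmatrix}0&1\\1&0\end{pmatrix}}$ through $\SL_2(\ZZ)$ generators cannot work as stated: $\det J=-1$, so $J$ is not a product of matrices in $\SL_2(\ZZ)$, and in particular your displayed factorization is incorrect. The direct computation is both shorter and avoids any hypothesis-tracking through intermediate steps.
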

\begin{proof}\,
We have $M'={\footnotesize \begin{pmatrix}w&z \\ y&x\end{pmatrix}}\in\fM$.
By Proposition \ref{Zag75-prop} \eqref{Chu-prop-1} we have $s(w,|z|)=s(x,|z|)$.
By Proposition \ref{Zag75-prop} \eqref{DD-reciprocity} we have
$s(x,|z|)=-s(|z|,x)+\frac{x^2+z^2+1}{12x|z|}-\frac{1}{4}$.
By Proposition \ref{Zag75-prop} \eqref{h and -h} we have
$n_{M}=\frac{x+w}{z}+12s(z,x)-\frac{x^2+z^2+1}{xz}$.
Similarly, by Proposition \ref{Zag75-prop} \eqref{DD-reciprocity} and \eqref{h and -h}, we have
$n_{M'}=\frac{x+w}{y}+12s(y,x)-\frac{x^2+y^2+1}{xy}$.
By Proposition \ref{Zag75-prop} \eqref{Chu-prop-1} and \eqref{h and -h},
we see that $s(z,x)=s(-y,x)=-s(y,x)$.
Therefore, we have $n_{M}+n_{M'}=\frac{(y+z)(xw-yz-1)}{xyz}=0$.
\end{proof}
\vspace{10pt}

Now we introduce a calculation method of Hirzebruch sums via Dedekind sums.
$\Delta$ always stands for a positive discriminant.
Let $\omega_\Delta:=(\sigma_\Delta+\sqrt{\Delta})/2$, where $\sigma_\Delta=0$ if $\Delta$ is even and $\sigma_\Delta=1$ if $\Delta$ is odd.
We write the fundamental unit
$\varepsilon_\Delta=q+r\omega_\Delta$ with positive integers $q$ and $r$.
For $\xi\in\mathbb{X}_\Delta$, we define $M_\xi$ by a unique integral matrix
$M_\xi:={\footnotesize \begin{pmatrix}x& y \\ z &w\end{pmatrix}}$ such that
$$\left\{ \begin{array}{rll}
\varepsilon_\Delta \xi&=&x\xi+y,\\
\varepsilon_\Delta&=&z\xi+w.
\end{array}\right.
$$
From the definition, we easily see that
\begin{equation}\label{omega-1}
M_{\omega_\Delta}=
{\footnotesize \begin{pmatrix} q+r\sigma_\Delta & r(\Delta-\sigma_\Delta)/4 \\ r & q\end{pmatrix}}.
\end{equation}
For $\xi=\frac{b+\sqrt{\Delta}}{2a}\in\mathbb{X}_\Delta$, we see that
$\xi=
{\footnotesize \begin{pmatrix} 1 & (b-\sigma_\Delta)/2 \\ 0 & a\end{pmatrix}}
\omega_\Delta
$ and
\begin{equation}\label{omega-2}
M_\xi=
{\footnotesize \begin{pmatrix} 1 & (b-\sigma_\Delta)/2 \\ 0 & a\end{pmatrix}}
M_{\omega_\Delta}
{\footnotesize \begin{pmatrix} 1 & (b-\sigma_\Delta)/2 \\ 0 & a\end{pmatrix}}^{-1}.
\end{equation}
We note that $M_\xi\in\fM$ for any $\xi\in\mathbb{X}_\Delta$.
For $\xi\in\mathbb{X}_\Delta$, we define $n(\xi):=n_{M_\xi}$.
\begin{lem}\label{via-cal}
Let $\Delta$ be a positive discriminant such that $\cN(\varepsilon_\Delta)=1$.
For any $\xi\in\mathbb{X}^0_\Delta$, we have
$$\Psi(\xi)=n(\xi).$$
\end{lem}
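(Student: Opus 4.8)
The idea is to reduce the statement to the combinatorial identity in Lemma~\ref{KM-lem-8} by expanding $\xi\in\mathbb{X}^0_\Delta$ as a \emph{purely periodic} continued fraction and tracking how the quantity $n(\xi)$ transforms along the period. First I would use that $\xi$ is reduced, so $\xi=[\overline{v_0,v_1,\dots,v_{l-1}}]$ with all $v_i\ge 1$, and moreover $l$ is even since $\mathcal N(\varepsilon_\Delta)=1$ by Proposition~\ref{prop-cfrac}~\eqref{minus-proper-euqiv}; write $l=2m$. The floor function gives $\lfloor\xi\rfloor=v_0$, and the shift operator sends $\xi_j\mapsto\xi_{j+1}$ via $\xi_j=v_j+1/\xi_{j+1}$, i.e. $\xi_j={\footnotesize\begin{pmatrix}v_j&1\\1&0\end{pmatrix}}\xi_{j+1}$. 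Since $\xi$ is purely periodic, $\xi_l=\xi_0=\xi$, so $\varepsilon_\Delta$ (which fixes $\xi$ and whose matrix $M_\xi$ is the relevant one) is, up to the correct power, the product $\prod_{j=0}^{l-1}{\footnotesize\begin{pmatrix}v_j&1\\1&0\end{pmatrix}}$. I would verify that this product is exactly $M_\xi$ (not a proper power), using that $\mathcal O_\Delta$'s fundamental unit corresponds to one full period.

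**Main steps.** (1) Set up the factorization $M_\xi=\prod_{j=0}^{l-1}{\footnotesize\begin{pmatrix}v_j&1\\1&0\end{pmatrix}}$ and observe ${\footnotesize\begin{pmatrix}v&1\\1&0\end{pmatrix}}={\footnotesize\begin{pmatrix}v&-1\\1&0\end{pmatrix}}{\footnotesize\begin{pmatrix}-1&0\\0&1\end{pmatrix}}$ is not quite the form in Lemma~\ref{KM-lem-8}; instead I would work directly with the partial products $P_i:=\prod_{j=0}^{i-1}{\footnotesize\begin{pmatrix}v_j&-1\\1&0\end{pmatrix}}$, which, since $l=2m$ is even, satisfies $P_{2m}=M_\xi$ (the two sign matrices ${\footnotesize\begin{pmatrix}-1&0\\0&1\end{pmatrix}}$ accumulated per step cancel in pairs). (2) Apply Lemma~\ref{KM-lem-8} iteratively: each left-multiplication by ${\footnotesize\begin{pmatrix}v_j&-1\\1&0\end{pmatrix}}$ changes $n_{P_i}$ by either $v_j+9$ or $v_j-3$ according to the sign pattern of the $(1,1)$- and $(2,1)$-entries of $P_i$. (3) The heart of the argument is a bookkeeping lemma: as $i$ runs over a full period, the number of steps of the "$+9$" type minus three times... — more precisely, I would show the total of the $-3$'s and $+9$'s telescopes so that $\sum_{i}(\text{correction}_i)=\sum_{j=0}^{2m-1}(-1)^{?}v_j$ with signs matching $\Psi(\xi)=\sum_{i=0}^{l-1}(-1)^i v_i$ (here $k=0$). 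The sign alternation $(-1)^i$ should emerge because reduced-ness forces the $(1,1)$- and $(2,1)$-entries of $P_i$ to alternate in sign as $i$ alternates in parity — this is where I expect to use $-1<\xi'<0<1<\xi$ and the fact that convergents $p_i/q_i$ straddle $\xi$ alternately. (4) Finally, $n_{M_\xi}=n_{P_{2m}}$ equals $n_{P_0}$ plus the accumulated corrections; since $P_0=I\notin\fM$ one must start the recursion from $P_1$ or $P_2$ and check the base case by hand, after which $n(\xi)=\Psi(\xi)$ falls out, the constant terms ($+9$'s and $-3$'s) having been arranged to cancel over the full even period.

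**Expected obstacle.** The main difficulty is step (3): establishing the precise sign pattern of the entries of the partial products $P_i$ around a full period, so that the case distinction in Lemma~\ref{KM-lem-8} produces the alternating signs of the Hirzebruch sum and so that the additive constants $9$ and $-3$ cancel exactly over $l=2m$ steps (one expects $m$ steps contributing $+9$ and $m$ steps... or some such balanced count, forced by $\det=1$ and periodicity). Handling the degenerate positions where an entry vanishes — which is exactly why Lemmas~\ref{translation-matrix} and \ref{minus-n_M} were proved, to massage $M_\xi$ into $\fM$ and into the hypotheses ($x\ne 0$, or $x>0$ and $y\ne 0$) of the step lemmas — is the fiddly part. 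I would also need the normalization $n_{\omega_\Delta}$ computed from \eqref{omega-1} as a sanity anchor, since by \eqref{omega-2} every $n(\xi)$ for $\xi\sim\omega_\Delta$ is conjugate-invariant (Lemma~\ref{translation-matrix}), which reduces the general reduced $\xi$ to a canonical period starting point and pins down the base case of the induction.
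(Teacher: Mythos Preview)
The paper does not actually prove this lemma: its ``proof'' is a pointer to the middle of the proof of \cite[Theorem~2]{KM20}, with the remark that the weaker hypothesis $\mathcal N(\varepsilon_\Delta)=1$ suffices. So there is no in-paper argument to compare against; your proposal is an attempt to reconstruct the cited argument using the tools the paper has imported (Lemma~\ref{KM-lem-8} in particular), which is a reasonable thing to do.

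That said, your step~(1) contains a real error. The identity $P_{2m}=M_\xi$ with $P_i=\prod_{j<i}{\footnotesize\begin{pmatrix}v_j&-1\\1&0\end{pmatrix}}$ is false: for $\xi=(1+\sqrt{3})/2=[\overline{1,2}]\in\mathbb X^0_{12}$ one computes $M_\xi={\footnotesize\begin{pmatrix}3&1\\2&1\end{pmatrix}}$ but $P_2={\footnotesize\begin{pmatrix}1&-1\\1&0\end{pmatrix}}{\footnotesize\begin{pmatrix}2&-1\\1&0\end{pmatrix}}={\footnotesize\begin{pmatrix}1&-1\\2&-1\end{pmatrix}}$. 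The reason your ``sign matrices cancel in pairs'' heuristic fails is that in $U_v:={\footnotesize\begin{pmatrix}v&1\\1&0\end{pmatrix}}=T_vS$ with $T_v:={\footnotesize\begin{pmatrix}v&-1\\1&0\end{pmatrix}}$ and $S=\mathrm{diag}(1,-1)$, the $S$'s are interleaved in $\prod U_{v_j}$ and do not commute past the $T$'s. What is true is $ST_vS=-T_{-v}$, whence
\[
M_\xi=\prod_{j=0}^{2m-1}U_{v_j}=(-1)^m\,T_{v_0}\,T_{-v_1}\,T_{v_2}\,T_{-v_3}\cdots T_{v_{2m-2}}\,T_{-v_{2m-1}}.
\]
This is the factorization you actually want: applying Lemma~\ref{KM-lem-8} step by step now contributes $k=(-1)^jv_j$ at the $j$-th stage, so the alternating sum $\Psi(\xi)=\sum_{j}(-1)^jv_j$ drops out of the $k$-terms directly, \emph{not} from the sign pattern of entries of the partial products as you conjectured in step~(3). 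What remains is to check that over a full period the additive constants from the two cases of Lemma~\ref{KM-lem-8} and the global $(-1)^m$ combine to zero (and to anchor the induction at a legitimate base matrix in $\fM$); this is genuine bookkeeping but no longer the conceptual obstacle you flagged.
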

\begin{proof}\,
The above equality was proved in the middle of the proof of Theorem 2 in \cite[Section 5]{KM20}.
The assumption of \cite[Theorem 2]{KM20} is that
$\Delta$ is of the form $d_1d_2f^2$,
where $d_1$ and $d_2$ are distinct negative fundamental discriminants,
and $f$ is a positive integer.
However, we can see that the above equality still holds under the weaker assumption, that is,
$\cN(\varepsilon_\Delta)=1$.
\end{proof}

\begin{lem}\label{not-reduced}
Let $\Delta$ be a positive discriminant such that $\cN(\varepsilon_\Delta)=1$.
Let $\omega_\Delta=(\sigma_\Delta+\sqrt{\Delta})/2$.
Then we have
$$\Psi(\omega_\Delta)=n(\omega_\Delta).$$
\end{lem}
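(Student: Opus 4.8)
The plan is to deduce the statement from Lemma~\ref{via-cal} by passing from the (generally non-reduced) number $\omega_\Delta$ to the first complete quotient of its continued fraction, which is reduced. Set $u:=\lfloor\omega_\Delta\rfloor$ and $\xi_1:=1/(\omega_\Delta-u)$. Since $\Delta$ is a non-square with $\Delta\ge5$ we have $\omega_\Delta=(\sigma_\Delta+\sqrt\Delta)/2>1$, so $u\ge1$, while $\omega_\Delta'=(\sigma_\Delta-\sqrt\Delta)/2<0$ gives $\omega_\Delta'-u<-1$; hence $\xi_1>1$ and $\xi_1'=1/(\omega_\Delta'-u)\in(-1,0)$, i.e. $\xi_1\in\XX_\Delta^0$ (that $\xi_1\in\XX_\Delta$ is Proposition~\ref{prop-cfrac}~\eqref{exist-red}). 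Because $\cN(\varepsilon_\Delta)=1$, the period length $l$ of $\omega_\Delta$ is even by Proposition~\ref{prop-cfrac}~\eqref{minus-proper-euqiv}, and then reading off the definition of $\Psi$ shows $\Psi(\xi_1)=-\Psi(\omega_\Delta)$: passing from $\omega_\Delta$ to $\xi_1$ either shortens the pre-period by one term or, when $\omega_\Delta$ is purely periodic, shifts the period cyclically by one, and in either case the parity of $l$ flips the alternating sum. By Lemma~\ref{via-cal}, $\Psi(\xi_1)=n(\xi_1)$, so it remains to prove $n(\omega_\Delta)=-n(\xi_1)$.

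For this I would use that multiplication by $\varepsilon_\Delta$ commutes with integral matrices: from $\xi_1=A\omega_\Delta$ with $A=\left(\begin{smallmatrix}0&1\\1&-u\end{smallmatrix}\right)\in\GL_2(\ZZ)$ and the uniqueness of $M_{\xi_1}$ one gets $M_{\xi_1}=AM_{\omega_\Delta}A^{-1}$. Factoring $A=\left(\begin{smallmatrix}0&1\\1&0\end{smallmatrix}\right)\left(\begin{smallmatrix}1&-u\\0&1\end{smallmatrix}\right)$ rewrites this as $M_{\xi_1}=\left(\begin{smallmatrix}0&1\\1&0\end{smallmatrix}\right)N\left(\begin{smallmatrix}0&1\\1&0\end{smallmatrix}\right)$ with $N:=\left(\begin{smallmatrix}1&-u\\0&1\end{smallmatrix}\right)M_{\omega_\Delta}\left(\begin{smallmatrix}1&u\\0&1\end{smallmatrix}\right)$. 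Lemma~\ref{translation-matrix} then gives $n_N=n_{M_{\omega_\Delta}}=n(\omega_\Delta)$, and Lemma~\ref{minus-n_M}, once its hypotheses are checked, gives $n(\xi_1)=n_{M_{\xi_1}}=-n_N=-n(\omega_\Delta)$, which together with the previous paragraph completes the proof. Using \eqref{omega-1} one finds $N=\left(\begin{smallmatrix}q+r\sigma_\Delta-ur & N_{12}\\ r & ur+q\end{smallmatrix}\right)$ with $N_{12}=r\bigl(\tfrac{\Delta-\sigma_\Delta}{4}+u\sigma_\Delta-u^2\bigr)$, where $\varepsilon_\Delta=q+r\omega_\Delta$ with positive integers $q,r$.

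The only genuine work is verifying the hypotheses of Lemma~\ref{minus-n_M} for $N$. That $N\in\fM$ is immediate since its lower-left entry $r$ is positive, so what is needed is $N_{11}=q+r\sigma_\Delta-ur>0$ and $N_{12}\neq0$. For positivity, $u\le\omega_\Delta=(\sigma_\Delta+\sqrt\Delta)/2$ gives $N_{11}\ge q+r\sigma_\Delta-r\tfrac{\sigma_\Delta+\sqrt\Delta}{2}=q+r\omega_\Delta'=\varepsilon_\Delta^{-1}>0$, hence $N_{11}\ge1$ since it is an integer. For $N_{12}\neq0$, substitute $\omega_\Delta=u+\xi_1^{-1}$ into $\omega_\Delta^2-\sigma_\Delta\omega_\Delta-\tfrac{\Delta-\sigma_\Delta}{4}=0$ and clear denominators; the coefficient of $\xi_1^2$ in the resulting quadratic for $\xi_1$ is exactly $-N_{12}/r$, and it is nonzero because $\xi_1$ is irrational. (Alternatively, $\cN(\varepsilon_\Delta)=1$ rules out $\Delta=5$, so $\omega_\Delta$ is never reduced and its continued fraction has pre-period of length exactly one; routing the argument through $\xi_1$ as above simply avoids this case distinction.) I expect this sign bookkeeping, together with the elementary but mildly fiddly check that $\Psi(\xi_1)=-\Psi(\omega_\Delta)$, to be the only real obstacle.
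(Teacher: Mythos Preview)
Your proposal is correct and follows essentially the same route as the paper's proof: both pass to the first complete quotient $(\omega_\Delta)_1$, apply Lemma~\ref{via-cal} there, and then use Lemmas~\ref{translation-matrix} and~\ref{minus-n_M} to transport $n$-values along the conjugation $M_{(\omega_\Delta)_1}=\left(\begin{smallmatrix}0&1\\1&0\end{smallmatrix}\right)\left(\begin{smallmatrix}1&-u\\0&1\end{smallmatrix}\right)M_{\omega_\Delta}\left(\begin{smallmatrix}1&u\\0&1\end{smallmatrix}\right)\left(\begin{smallmatrix}0&1\\1&0\end{smallmatrix}\right)$. Your verification of the hypotheses of Lemma~\ref{minus-n_M} (positivity of $N_{11}$ via $N_{11}\ge\varepsilon_\Delta'>0$, and $N_{12}\neq0$ via the minimal polynomial) matches the paper's, with only cosmetic differences in phrasing.
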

\begin{proof}\,
We have
$(\omega_{\Delta})_1=1/(\omega_\Delta-\lfloor \omega_\Delta \rfloor)\in\mathbb{X}_\Delta$, where $(\omega_{\Delta})_1$ denotes the shift by 1 as given in the notation of Proposition \ref{prop-cfrac}.
Since $0<\omega_\Delta-\lfloor \omega_\Delta \rfloor<1$ and
$\omega'_\Delta-\lfloor \omega_\Delta \rfloor<-1$, we see that $(\omega_{\Delta})_1\in\mathbb{X}^0_\Delta$.
By Lemma \ref{via-cal} we have $\Psi((\omega_{\Delta})_1)=n((\omega_{\Delta})_1)$.
Form the definition, $\Psi(\omega_\Delta)=-\Psi((\omega_{\Delta})_1)$.
Let $A:={\footnotesize \begin{pmatrix}0&1 \\ 1&0\end{pmatrix}}$ and
$B:={\footnotesize \begin{pmatrix}1&-\lfloor \omega_\Delta \rfloor \\ 0&1\end{pmatrix}}$.
We see that $(\omega_{\Delta})_1=AB\omega_\Delta$ and
$M_{(\omega_{\Delta})_1}=AB M_{\omega_\Delta} B^{-1} A^{-1}$.
By \eqref{omega-1} we have
$$
B M_{\omega_\Delta} B^{-1}=
{\footnotesize \begin{pmatrix}q+r\sigma_\Delta-r\lfloor \omega_\Delta \rfloor
& -\lfloor \omega_\Delta \rfloor^2 + \sigma_\Delta \lfloor \omega_\Delta \rfloor +(\Delta-\sigma_\Delta)/4
\\r
& q+r\lfloor \omega_\Delta \rfloor \end{pmatrix}}\in\fM
$$
and $q+r\sigma_\Delta-r\lfloor \omega_\Delta \rfloor>\varepsilon'_\Delta>0$.
Since $\omega_\Delta$ and $\omega'_\Delta$ are two distinct roots of
$X^2-\sigma_\Delta X +(\sigma_\Delta-\Delta)/4$,
the $(1,2)$-component of $B M_{\omega_\Delta} B^{-1}$ is not equal to $0$.
By Lemma \ref{translation-matrix} and Lemma \ref{minus-n_M},
we have $n_{M_{\omega_\Delta}}=n_{B M_{\omega_\Delta} B^{-1}}
=-n_{AB M_{\omega_\Delta} B^{-1}A^{-1}}=-n_{M_{(\omega_{\Delta})_1}}$.
Therefore, we have
$\Psi(\omega_\Delta)=-\Psi((\omega_{\Delta})_1)=-n((\omega_{\Delta})_1)=n(\omega_\Delta)$.
\end{proof}
\vspace{10pt}

\subsection{Fundamental units of quadratic orders}
The first two lemmas are standard facts on quadratic orders.
\begin{lem}[{\cite[p. 328]{Hal13}}]\label{lem-cor}
Let $\Delta=f^2d$ be a discriminant with a fundamental discriminant $d$ and a conductor $f$. Then the class numbers of $\cO_d$ and $\cO_{\Delta}$ have the following relation:
$$h(f^2d)=h(d)
\frac{f}{[\cO_d^\times:\cO_{f^2d}^\times]}
\displaystyle \prod_{\substack{q\mid f \\ q:\text{ prime}}}\big(1-\chi_d(q)q^{-1}\big).$$
\end{lem}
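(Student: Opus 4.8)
The plan is to deduce the formula from the standard four-term exact sequence relating the (wide) ideal class group $\cC_\Delta$ of the order $\cO_\Delta$ to the class group $\cC_d$ of the maximal order $\cO_d$ through the conductor. First I would record the elementary facts: the conductor of $\cO_\Delta$ in $\cO_d$ is the $\cO_d$-ideal $f\cO_d$; one has $\cO_\Delta=\ZZ+f\cO_d$, whence $\ZZ\cap f\cO_d=f\ZZ$ and $\cO_\Delta/f\cO_d\cong\ZZ/f\ZZ$; and consequently the natural map $(\ZZ/f\ZZ)^\times\hookrightarrow(\cO_d/f\cO_d)^\times$ is injective, with a unit of $\cO_d$ lying in $\cO_\Delta^\times$ if and only if its image lands in $(\ZZ/f\ZZ)^\times$. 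Then I would establish exactness of
\begin{equation*}
1\longrightarrow\cO_d^\times/\cO_\Delta^\times\longrightarrow(\cO_d/f\cO_d)^\times/(\ZZ/f\ZZ)^\times\longrightarrow\cC_\Delta\longrightarrow\cC_d\longrightarrow1,
\end{equation*}
where $\cC_\Delta\to\cC_d$ is $[\fa]\mapsto[\fa\cO_d]$ and the middle map sends the class of $\bar\alpha\in(\cO_d/f\cO_d)^\times$ to the class of the invertible $\cO_\Delta$-ideal $\alpha\cO_d\cap\cO_\Delta$, for any lift $\alpha\in\cO_d$ of $\bar\alpha$ coprime to $f$.

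Taking cardinalities and using $|\cC_\Delta|=h(f^2d)$, $|\cC_d|=h(d)$ yields
\begin{equation*}
h(f^2d)=h(d)\,\frac{\bigl|(\cO_d/f\cO_d)^\times\bigr|}{\varphi(f)\,[\cO_d^\times:\cO_\Delta^\times]},
\end{equation*}
with $\varphi$ the Euler totient function, so it remains to evaluate $\bigl|(\cO_d/f\cO_d)^\times\bigr|/\varphi(f)$. This is a purely local computation: both quantities are multiplicative in $f$ by the Chinese Remainder Theorem, so it suffices to treat a prime power $q^m\parallel f$. The reduction $(\cO_d/q^m\cO_d)^\times\to(\cO_d/q\cO_d)^\times$ is surjective with kernel $1+q\cO_d/q^m\cO_d$ of order $q^{2(m-1)}$, hence $\bigl|(\cO_d/q^m\cO_d)^\times\bigr|=q^{2(m-1)}\bigl|(\cO_d/q\cO_d)^\times\bigr|$, while $\varphi(q^m)=q^{m-1}(q-1)$. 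Since $\cO_d/q\cO_d$ is $\mathbb{F}_q\times\mathbb{F}_q$, $\mathbb{F}_{q^2}$, or $\mathbb{F}_q[t]/(t^2)$ according as $\chi_d(q)=1,-1,0$, in every case $\bigl|(\cO_d/q\cO_d)^\times\bigr|=(q-1)\bigl(q-\chi_d(q)\bigr)$, so
\begin{equation*}
\frac{\bigl|(\cO_d/q^m\cO_d)^\times\bigr|}{\varphi(q^m)}=q^{m-1}\bigl(q-\chi_d(q)\bigr)=q^m\bigl(1-\chi_d(q)q^{-1}\bigr).
\end{equation*}
Multiplying over the prime powers $q^m\parallel f$ and using $\prod_{q^m\parallel f}q^m=f$ gives $\bigl|(\cO_d/f\cO_d)^\times\bigr|/\varphi(f)=f\prod_{q\mid f}(1-\chi_d(q)q^{-1})$, which together with the cardinality identity above is exactly the asserted relation.

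The step I expect to be the main obstacle is the construction and exactness of the four-term sequence, specifically that $\bar\alpha\mapsto[\alpha\cO_d\cap\cO_\Delta]$ is well defined (independent of the choice of lift) and that its image coincides with the kernel of $\cC_\Delta\to\cC_d$. This rests on the classical facts that every invertible $\cO_\Delta$-ideal is $\cO_\Delta$-equivalent to one prime to $f$, and that extension $\fa\mapsto\fa\cO_d$ and contraction $\mathfrak{b}\mapsto\mathfrak{b}\cap\cO_\Delta$ are mutually inverse, product-preserving bijections on ideals prime to $f$. All of this is precisely what the cited reference \cite[p.~328]{Hal13} supplies, so within the paper I would simply invoke it. An alternative is to avoid the exact sequence and instead compare the residues at $s=1$ of the zeta functions of $\cO_\Delta$ and $\cO_d$, whose ratio of Euler factors at each $q\mid f$ is again $1-\chi_d(q)q^{-1}$; but that route invokes the analytic class number formula and reproduces the same local computation, so the algebraic argument is the cleaner one.
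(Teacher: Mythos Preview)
Your argument is correct and is the standard derivation of this classical formula. Note, however, that the paper does not actually prove this lemma: it is stated with a direct citation to \cite[p.~328]{Hal13} and no proof is given, as it is a well-known fact about orders in quadratic fields. Your exact-sequence approach is precisely the argument underlying the cited reference, so there is nothing to compare; within the paper it is simply quoted.
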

\begin{lem}\label{unit-str}  
Let $p_1$ and $p_2$ be distinct primes such that $p_1\equiv p_2\equiv 3$ $(\text{mod }4)$. 
Let $\varepsilon_{p_1p_2}=\frac{t+u\sqrt{p_1p_2}}{2}$ be the fundamental unit of $\cO_{p_1p_2}$ such that $t\equiv u$ $(\text{mod }2)$.
Then we have 
\begin{enumerate}[(i)]
\item\label{HK-1} 
$h(p_1p_2)$ is odd and $\cN(\varepsilon_{p_1p_2})=1$. 
\item\label{HK-2}
If $p_1p_2\equiv1$ $(\text{mod }8)$, then $t\equiv u\equiv 0$ $(\text{mod }2)$.
\item\label{HK-3} $\displaystyle \big(h(4p_1p_2),\,\varepsilon_{4p_1p_2}\big)$ is equal to
$$\left\{\begin{array}{rl}
\big(h(p_1p_2),\,\varepsilon_{p_1p_2}\big) & \,\text{ if }\, p_1p_2\equiv 1\,\,\,(\text{mod }8),\\
\big(3h(p_1p_2),\,\varepsilon_{p_1p_2}\big) & \,\text{ if }\, p_1p_2\equiv 5\,\,\,(\text{mod }8)
\,\text{ and }\, t\equiv u \equiv 0\,\,\,(\text{mod }2),\\
\big(h(p_1p_2),\,\varepsilon_{p_1p_2}^3\big) & \,\text{ if }\, p_1p_2\equiv 5\,\,\,(\text{mod }8)
\,\text{ and }\, t\equiv u \equiv 1\,\,\,(\text{mod }2).
\end{array}\right.$$
\end{enumerate}
\end{lem}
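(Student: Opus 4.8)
The plan is to establish the three assertions in order, drawing on Theorem~\ref{thm-KM}, Lemma~\ref{lem-cor}, and elementary manipulations of the norm equation coming from $\cN(\varepsilon_{p_1p_2})=1$.

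For \eqref{HK-1}, I would argue the oddness of $h(p_1p_2)$ by genus theory: since $p_1\equiv p_2\equiv 3$ $(\text{mod }4)$ the integer $p_1p_2\equiv 1$ $(\text{mod }4)$ is a fundamental discriminant that factors into exactly two prime discriminants, $p_1p_2=(-p_1)(-p_2)$, so $\cO_{p_1p_2}$ has exactly two genera and $h(p_1p_2)$ is odd; this is precisely case (iv) of the list recalled in Section~\ref{sec-1} (\cite[Theorem 5.6.13]{Hal13}). For $\cN(\varepsilon_{p_1p_2})=1$ the cleanest route is to invoke Theorem~\ref{thm-KM} with $(d_1,d_2,f)=(-p_1,-p_2,1)$, whose conclusion includes $\cN(\varepsilon_{p_1p_2})=1$. (Alternatively, if $\cN(\varepsilon_{p_1p_2})=-1$ then $t^2-u^2p_1p_2=-4$ would force $-4$, hence $-1$, to be a square modulo $p_1$, contradicting $p_1\equiv 3$ $(\text{mod }4)$.)

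For \eqref{HK-2}, I would write the norm equation as $t^2-u^2p_1p_2=4$. If $t$ and $u$ were both odd, then reducing modulo $8$ and using that odd squares are $\equiv 1$ $(\text{mod }8)$ together with the hypothesis $p_1p_2\equiv 1$ $(\text{mod }8)$ gives $t^2-u^2p_1p_2\equiv 1-1\equiv 0$ $(\text{mod }8)$, which is incompatible with $t^2-u^2p_1p_2=4$; hence $t\equiv u\equiv 0$ $(\text{mod }2)$. For \eqref{HK-3}, set $d:=p_1p_2$; then $d$ is fundamental, $4p_1p_2=2^2d$ has conductor $f=2$, and $\cO_{4p_1p_2}=\ZZ[\sqrt{p_1p_2}]$ since $d\equiv 1$ $(\text{mod }4)$. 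Lemma~\ref{lem-cor} then yields
\[
h(4p_1p_2)=h(p_1p_2)\,\frac{2}{[\cO_{p_1p_2}^\times:\cO_{4p_1p_2}^\times]}\bigl(1-\tfrac12\chi_{p_1p_2}(2)\bigr),
\]
where $\chi_{p_1p_2}(2)=1$ if $p_1p_2\equiv 1$ $(\text{mod }8)$ and $\chi_{p_1p_2}(2)=-1$ if $p_1p_2\equiv 5$ $(\text{mod }8)$. The index $[\cO_{p_1p_2}^\times:\cO_{4p_1p_2}^\times]$ equals the least $k\ge 1$ with $\varepsilon_{p_1p_2}^k\in\ZZ[\sqrt{p_1p_2}]$, and then $\varepsilon_{4p_1p_2}=\varepsilon_{p_1p_2}^k$. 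When $t\equiv u\equiv 0$ $(\text{mod }2)$ — automatic when $p_1p_2\equiv 1$ $(\text{mod }8)$ by \eqref{HK-2} — we have $\varepsilon_{p_1p_2}=\tfrac t2+\tfrac u2\sqrt{p_1p_2}\in\ZZ[\sqrt{p_1p_2}]$, so $k=1$. When $t\equiv u\equiv 1$ $(\text{mod }2)$ — which forces $p_1p_2\equiv 5$ $(\text{mod }8)$ by \eqref{HK-2} — one checks $\varepsilon_{p_1p_2}^2=\tfrac{t^2+u^2p_1p_2}{4}+\tfrac{tu}{2}\sqrt{p_1p_2}\notin\ZZ[\sqrt{p_1p_2}]$ because $tu$ is odd, whereas a short computation modulo $8$ (using $t^2\equiv u^2\equiv 1$ and $p_1p_2\equiv 5$) shows both coefficients of $\varepsilon_{p_1p_2}^3=\tfrac{t(t^2+3u^2p_1p_2)}{8}+\tfrac{u(3t^2+u^2p_1p_2)}{8}\sqrt{p_1p_2}$ are integers, so $k=3$. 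Substituting the three admissible pairs $(\chi_{p_1p_2}(2),k)=(1,1),(-1,1),(-1,3)$ into the displayed identity gives $h(4p_1p_2)=h(p_1p_2),\,3h(p_1p_2),\,h(p_1p_2)$, respectively, which is the asserted list.

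I expect the only delicate point to be the index computation in \eqref{HK-3} — namely verifying $[\cO_{p_1p_2}^\times:\cO_{4p_1p_2}^\times]=3$ in the half-integral case, i.e. that $\varepsilon_{p_1p_2}^3$ acquires integral coefficients while $\varepsilon_{p_1p_2}$ and $\varepsilon_{p_1p_2}^2$ do not — together with the bookkeeping that parts \eqref{HK-1}--\eqref{HK-2} make the three listed cases exhaustive; the rest is a direct substitution into Lemma~\ref{lem-cor}.
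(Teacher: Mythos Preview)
Your proof is correct and follows essentially the same route as the paper: genus theory for the oddness of $h(p_1p_2)$, the norm equation for $\cN(\varepsilon_{p_1p_2})=1$ and for \eqref{HK-2}, and Lemma~\ref{lem-cor} together with the index $[\cO_{p_1p_2}^\times:\cO_{4p_1p_2}^\times]$ for \eqref{HK-3}. The only difference is that the paper outsources these computations to \cite[Theorems 5.2.2, 5.2.3, 5.6.13]{Hal13}, whereas you carry them out explicitly; your mod~$8$ verifications (for \eqref{HK-2} and for $\varepsilon_{p_1p_2}^3\in\ZZ[\sqrt{p_1p_2}]$ when $t,u$ are odd and $p_1p_2\equiv 5$) are exactly the content behind those citations.
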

\begin{proof}\,
\eqref{HK-1} 
We refer to \cite[Theorem 5.6.13 and Theorem 5.2.2]{Hal13}.\\
\eqref{HK-2} We refer to \cite[Theorem 5.2.3]{Hal13}.\\
\eqref{HK-3} 
If $t\equiv u\equiv 0$ $(\text{mod }2)$, then $\varepsilon_{4p_1p_2}=\varepsilon_{p_1p_2}$.
Now suppose that $t\equiv u\equiv 1$ $(\text{mod }2)$. 
By \cite[Theorem 5.2.3]{Hal13}
we have $p_1p_2\equiv 5$ $(\text{mod }8)$ and $\varepsilon_{4p_1p_2}=\varepsilon_{p_1p_2}^3$.
The results on $h(4p_1p_2)$ follow from Lemma \ref{lem-cor} and $\chi_d(2)=(-1)^{(d^2-1)/8}$ for odd $d$.
\end{proof}

By modifying the results of \cite{Wil15} and \cite{ZY14}, we show the following properties on the fundamental integral units $\varepsilon_{p_1p_2}$ and $\varepsilon_{4p_1p_2}$.
\begin{lem}\label{ZY-thm} 
Let $p_1$ and $p_2$ be distinct primes such that $p_1\equiv p_2\equiv 3$ 
$(\text{mod }4)$. Let $f=2$ if $p_1p_2\equiv 5$ $(\text{mod }8)$ and $\varepsilon_{p_1p_2}$ has half-integral coefficients, and let $f=1$ or $2$ otherwise. 
Let $\Delta=p_1p_2f^2$ and $\varepsilon_{\Delta}=x+y\sqrt{p_1p_2}$ be the fundamental unit of $\cO_{\Delta}$. Then we have 
\begin{enumerate}[(i)]
\item\label{ZY-1} $x$, $y$ are integers such that $x\equiv 7$ $(\text{mod }8)$ and $y\equiv 0$ $(\text{mod }4)$. 
Moreover, $y\equiv 4$ $(\text{mod }8)$ if $(p_1,p_2)\equiv (3,3)$ $(\text{mod }8)$, and
$y\equiv 0$ $(\text{mod }8)$ if $(p_1,p_2)\equiv (7,7)$ $(\text{mod }8)$.
\item\label{ZY-2} $p_1\varepsilon_{\Delta}=(X+Y\sqrt{p_1p_2})^2$ for some integers $X$ and $Y$.
\end{enumerate}
\end{lem}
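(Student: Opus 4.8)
The plan is to reduce every case of the lemma to the single situation in which $\varepsilon_\Delta=x+y\sqrt{p_1p_2}$ with $x,y\in\ZZ_{>0}$, $x^2-p_1p_2y^2=1$, and $\varepsilon_\Delta$ is the fundamental unit of an order $\cO_\Delta\supseteq\ZZ[\sqrt{p_1p_2}]$. If $f=2$ then $\cO_\Delta=\ZZ[\sqrt{p_1p_2}]$, so $\varepsilon_\Delta\in\ZZ[\sqrt{p_1p_2}]$, and $\cN(\varepsilon_\Delta)=1$ because by Lemma \ref{unit-str}\eqref{HK-3} it is $\varepsilon_{p_1p_2}$ or $\varepsilon_{p_1p_2}^3$ and $\cN(\varepsilon_{p_1p_2})=1$ by Lemma \ref{unit-str}\eqref{HK-1}. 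If $f=1$, the hypothesis permits this only when $\varepsilon_{p_1p_2}$ has integral coefficients — automatic by Lemma \ref{unit-str}\eqref{HK-2} when $p_1p_2\equiv1\pmod8$, assumed when $p_1p_2\equiv5\pmod8$ — and then $\varepsilon_\Delta=\varepsilon_{p_1p_2}\in\ZZ[\sqrt{p_1p_2}]$ with norm $1$ by Lemma \ref{unit-str}\eqref{HK-1}. Now $x$ must be odd, since $x$ even together with $p_1p_2\equiv1\pmod4$ would give $p_1p_2y^2=x^2-1\equiv3\pmod4$, impossible; then $x^2\equiv1\pmod8$ forces $p_1p_2y^2\equiv0\pmod8$, so $4\mid y$. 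Put $A=(x-1)/2$ and $B=(x+1)/2$: then $A<B=A+1$, $\gcd(A,B)=1$ and $AB=p_1p_2(y/2)^2$.

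The heart of the argument is a dichotomy for this factorization. As $A,B$ are coprime while each of $p_1,p_2$ divides $AB=p_1p_2(y/2)^2$ to an odd power, the squarefree parts of $A$ and $B$ are coprime and multiply to $p_1p_2$; hence either (I) $\{A,B\}=\{s^2,\,p_1p_2t^2\}$, or (II) $\{A,B\}=\{p_1s^2,\,p_2t^2\}$, for suitable positive integers $s,t$ with $\gcd(s,t)=1$ and $st=y/2$. Alternative (I) is impossible: if $A=s^2$ and $B=p_1p_2t^2$ then $p_1p_2t^2-s^2=1$, so $s^2\equiv-1\pmod{p_1}$, contradicting $p_1\equiv3\pmod4$; and if $A=p_1p_2t^2$ and $B=s^2$ then $s+t\sqrt{p_1p_2}\in\cO_\Delta$ is a unit with $(s+t\sqrt{p_1p_2})^2=(A+B)+2st\sqrt{p_1p_2}=x+y\sqrt{p_1p_2}=\varepsilon_\Delta$, contradicting that $\varepsilon_\Delta$ is fundamental. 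So alternative (II) holds. Writing whichever of $A,B$ is divisible by $p_1$ as $p_1m^2$ and the other as $p_2n^2$ (so $\{p_1m^2,p_2n^2\}=\{A,B\}$ and $mn=y/2$), one computes
$$(p_1m+n\sqrt{p_1p_2})^2=p_1\bigl(p_1m^2+p_2n^2\bigr)+2p_1mn\sqrt{p_1p_2}=p_1(A+B)+p_1y\sqrt{p_1p_2}=p_1\varepsilon_\Delta,$$
which proves \eqref{ZY-2}.

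For \eqref{ZY-1}, since $A<B$ we may write case (II) as $A=qa^2$, $B=q'b^2$ with $\{q,q'\}=\{p_1,p_2\}$, $a,b>0$, $\gcd(a,b)=1$ and $ab=y/2$. Reducing $q'b^2-qa^2=B-A=1$ modulo $4$, with $q\equiv q'\equiv3$, gives $b^2-a^2\equiv3\pmod4$, so $a$ is odd and $b$ is even. Hence $x=2A+1=2qa^2+1\equiv2q+1\equiv7\pmod8$ (using $a^2\equiv1\pmod8$ and $2q\equiv6\pmod8$), and $y=2ab$ with $b$ even gives $4\mid y$. For the refinement, the power of $2$ dividing $B=q'b^2$ is even (as $q'$ is odd) and equals the power of $2$ dividing $A+1$, while $A=qa^2\equiv q\pmod8$. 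If $p_1\equiv p_2\equiv3\pmod8$ then $A\equiv3\pmod8$, so the power of $2$ dividing $A+1$ is exactly $2$, whence $b$ is exactly divisible by $2$ and $y=2ab\equiv4\pmod8$. If $p_1\equiv p_2\equiv7\pmod8$ then $A\equiv7\pmod8$, so $8\mid A+1$; being even and at least $3$, the power of $2$ dividing $A+1$ is at least $4$, so $4\mid b$ and $8\mid y$.

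The crux — and the step I expect to require the most care — is the dichotomy in the second paragraph: the whole proof rests on the fact that the coprimality of the consecutive integers $A$ and $B$, combined with the arithmetic of two primes $\equiv3\pmod4$ and with $\cN(\varepsilon_\Delta)=1$, forces the balanced factorization $\{A,B\}=\{p_1\cdot\square,\,p_2\cdot\square\}$ — the two rival possibilities being excluded, respectively, by the non-solvability of $X^2\equiv-1\pmod{p_1}$ and by the minimality of the fundamental unit (which in the order $\cO_\Delta$ plays the role normally played by the absence of a unit of norm $-1$). Everything else is bookkeeping: fitting $f$ to the cases of Lemma \ref{unit-str}, and the $2$-adic valuation count behind the $y\pmod8$ refinement, whose only delicate point is that the power of $2$ dividing $B$ must be even.
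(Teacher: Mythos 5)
Your proof is correct, and it takes a genuinely different (more self-contained) route than the paper, which simply cites \cite[Theorem 2]{Wil15} for part \eqref{ZY-1} and \cite[Lemma 3.2]{ZY14} for the existence of rational $X,Y$ in part \eqref{ZY-2}, adding only a short computation ($Y^2=(x+\alpha)/(2p_2)$, $\alpha\in\{\pm1\}$) to upgrade rationality to integrality. You instead reprove both parts from scratch via the Dirichlet-style factorization $(x-1)(x+1)=p_1p_2y^2$ applied to the coprime consecutive integers $A=(x-1)/2$, $B=(x+1)/2$: the dichotomy on squarefree parts, the exclusion of $\{s^2,p_1p_2t^2\}$ by non-solvability of $X^2\equiv-1\pmod{p_1}$ and by the minimality of $\varepsilon_\Delta$, and the resulting balanced form $\{A,B\}=\{p_1m^2,p_2n^2\}$ which yields $p_1\varepsilon_\Delta=(p_1m+n\sqrt{p_1p_2})^2$ directly with integral $X=p_1m$, $Y=n$. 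All steps check out, including the preliminary reductions (integrality of $x,y$ under the stated hypotheses on $f$ via Lemma \ref{unit-str}, and $\cN(\varepsilon_\Delta)=1$), the congruence $x\equiv 7\pmod 8$ from $x=2qa^2+1$ with $a$ odd, and the $2$-adic refinement of $y\pmod 8$ using that $v_2(B)=2v_2(b)$ is even. This is essentially the argument underlying the cited results of Williams and Zhang--Yue, and it is the integral-coefficient analogue of the paper's own proof of Lemma \ref{William-thm} (which factors $t^2-p_1p_2u^2=4$ the same way); what your version buys is a unified, reference-free treatment of \eqref{ZY-1} and \eqref{ZY-2} from a single dichotomy, at the cost of length on the page.
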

\begin{proof}\,
\eqref{ZY-1}  We refer to \cite[Theorem 2]{Wil15}.\\ 
\eqref{ZY-2} By \cite[Lemma 3.2]{ZY14} there exist $X$, $Y\in\mathbb{Q}$ such that 
$p_1(x+y\sqrt{p_1p_2})=(X+Y\sqrt{p_1p_2})^2$. 
We have $p_1x=X^2+p_1p_2Y^2$ and $p_1y=2XY$; so, we get 
$$p_1p_2Y^4-p_1xY^2+\frac{p_1^2y^2}{4}=0.$$
Then $Y^2=(x+\alpha)/(2p_2)$ with $\alpha\in\{\pm1\}$ since $x^2-p_1p_2y^2=1$. 
It follows that $2\mid (x+\alpha)$ and $p_2\mid (x+\alpha)$; thus we have that $Y^2\in\mathbb{Z}$ and $X^2=p_1x-p_1p_2Y^2\in\mathbb{Z}$. 
Therefore, $X$ and $Y$ are integers.
\end{proof}

We use Dirichlet's method \cite{Dir1834} (see also \cite{Wil15}) to prove the following lemma about the fundamental half-integral unit $\varepsilon_{p_1p_2}$.
\begin{lem}\label{William-thm}
Let $p_1$ and $p_2$ be distinct primes such that $p_1\equiv p_2\equiv 3$ 
$(\text{mod }4)$. Assume that $p_1p_2\equiv 5$ $(\text{mod }8)$ and 
$\varepsilon_{p_1p_2}=\frac{t+u\sqrt{p_1p_2}}{2}$ with $t\equiv u\equiv 1$ $(\text{mod }2)$. Then we have $t\equiv 1$ $(\text{mod }4)$.
\end{lem}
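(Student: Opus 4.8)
The plan is to carry out Dirichlet's classical descent on the norm equation satisfied by $\varepsilon_{p_1p_2}$. By Lemma~\ref{unit-str}\,\eqref{HK-1} we have $\cN(\varepsilon_{p_1p_2})=1$, so $t^2-p_1p_2u^2=4$, i.e. $(t-2)(t+2)=p_1p_2u^2$. Since $t$ is odd, both $t-2$ and $t+2$ are odd, and any common divisor would divide their difference $4$; hence $\gcd(t-2,t+2)=1$, and both factors are positive because $t^2=p_1p_2u^2+4>4$. Comparing $q$-adic valuations in $(t-2)(t+2)=p_1p_2u^2$ shows that any prime $q\notin\{p_1,p_2\}$ occurs to an even power in whichever of $t\pm2$ it divides, while each of $p_1,p_2$ divides exactly one of $t-2,t+2$, and to an odd power. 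Thus, according to how $p_1,p_2$ are split between the two coprime factors, exactly one of the following holds with $d_1,d_2$ odd positive integers: (A) $t-2=d_1^2$, $t+2=p_1p_2d_2^2$; (B) $t-2=p_1d_1^2$, $t+2=p_2d_2^2$; (C) $t-2=p_2d_1^2$, $t+2=p_1d_2^2$; or (D) $t-2=p_1p_2d_1^2$, $t+2=d_2^2$.

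First I would rule out (A) and (D). In (A), subtracting the two equations gives $d_1^2-p_1p_2d_2^2=-4$; as $d_1\equiv d_2\equiv1\pmod2$, the element $\tfrac12(d_1+d_2\sqrt{p_1p_2})$ lies in $\cO_{p_1p_2}$ and has norm $-1$, forcing $\cN(\varepsilon_{p_1p_2})=-1$ and contradicting Lemma~\ref{unit-str}\,\eqref{HK-1}. In (D), one gets $d_2^2-p_1p_2d_1^2=4$, so $\zeta:=\tfrac12(d_2+d_1\sqrt{p_1p_2})\in\cO_{p_1p_2}$ is a unit with $\zeta>1$; since $u=d_1d_2$ with $d_2=\sqrt{t+2}\ge3$, one has $d_1<u$ and $d_2<t$, hence $1<\zeta<\varepsilon_{p_1p_2}$, contradicting the minimality of the fundamental unit.

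It then remains to treat (B), case (C) being identical after interchanging $p_1$ and $p_2$. From $t-2=p_1d_1^2$ with $d_1$ odd and the fact that an odd square is $\equiv1\pmod8$, we obtain $t\equiv p_1+2\pmod8$. Since $p_1\equiv3\pmod4$, either $p_1\equiv3\pmod8$, giving $t\equiv5\pmod8$, or $p_1\equiv7\pmod8$, giving $t\equiv1\pmod8$; in both cases $t\equiv1\pmod4$, as claimed. (The companion relation $t+2=p_2d_2^2$ similarly yields $t\equiv p_2-2\pmod8$, in accordance with $p_1p_2\equiv5\pmod8$.)

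The step requiring the most care is the elimination of (A) and (D): one must check that the descended elements really lie in $\cO_{p_1p_2}$ — which is where the parities $d_1\equiv d_2\pmod2$ enter — and, in (D), that the produced unit is \emph{strictly} smaller than $\varepsilon_{p_1p_2}$, so that the contradiction is with the fundamental unit's minimality rather than with its norm (which equals $+1$ in this case). The remainder — the coprimality and valuation bookkeeping yielding the four-case dichotomy, and the final reduction modulo $8$ — is routine.
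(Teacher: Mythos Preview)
Your proof is correct and follows essentially the same Dirichlet-descent approach as the paper: factor $(t-2)(t+2)=p_1p_2u^2$ with $\gcd(t-2,t+2)=1$, list the four possible distributions of $p_1,p_2$ between the two factors, eliminate the two ``unsplit'' cases by producing a unit contradicting either $\cN(\varepsilon_{p_1p_2})=1$ or the minimality of $\varepsilon_{p_1p_2}$, and conclude $t=2+p_id_1^2\equiv1\pmod4$ from the remaining cases. The paper's write-up is terser (it phrases both eliminations as ``contradicts minimality'' and works only $\bmod\ 4$ at the end), but the substance is the same.
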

\begin{proof}\,
We note that $(t,u)\in\mathbb{N}^2$ is the smallest nontrivial solution of 
$x^2-p_1p_2y^2=\pm 4$, and $t^2-p_1p_2u^2=4$.
Since $(t-2)(t+2)=p_1p_2u^2$ and $(t-2,t+2)=1$, 
there exist odd positive integers $r$ and $s$ satisfying one of the following:
$$(t-2,t+2)=\left\{
\begin{array}{l}
(p_1r^2,p_2s^2),\\
(p_2r^2,p_1s^2),\\
(p_1p_2r^2,s^2),\\
(r^2,p_1p_2s^2).
\end{array}\right.$$
For the third case (resp., the fourth case) we have that $s^2-p_1p_2r^2=4$ (resp., 
$r^2-p_1p_2s^2=-4$), which is a contradiction with the minimality of $(t,u)$.
Therefore, we have that
$t=2+p_ir^2\equiv 1$ $(\text{mod }4)$,
where $i=1$ or $2$.
\end{proof}
\vspace{10pt}

\section{Proofs of Theorems \ref{main-thm} and \ref{main-thm-f2}}\label{sec-4}
Let $p_1$ and $p_2$ be distinct primes congruent to $3$ modulo $4$.
Under the same notation as in Theorem \ref{thm-KM}, let $d_1=-p_1$ and $d_2=-p_2$, and let $f=1$ or $2$. 
By the Kaneko-Mizuno-Zagier formula \eqref{KM-equality-2} we have
\begin{eqnarray}\label{app-KM}
&&h(-p_1)h(-p_2)\theta(-p_1,-p_2,f)\nonumber\\
&=&\left\{
\begin{array}{ll}
\displaystyle
 \sum_{[\eta]_\sim\in\fX_{p_1p_2f^2}} \chi^{(p_1p_2f^2)}_{-p_1,-p_2}([I(\eta)]^+)\frac{\Psi(\eta)}{6}
&\displaystyle \text{if }\min\{p_1,p_2\}>3,\\
\displaystyle
 \sum_{[\eta]_\sim\in\fX_{p_1p_2f^2}} \chi^{(p_1p_2f^2)}_{-p_1,-p_2}([I(\eta)]^+)\frac{\Psi(\eta)}{2}
&\displaystyle \text{if }\min\{p_1,p_2\}=3.
\end{array}\right. 
\end{eqnarray}
Moreover, we remark that 
if\, $\min\{p_1,p_2\}>3$ (resp., $\min\{p_1,p_2\}=3$), then
$\Psi(\eta)/6$ (resp., $\Psi(\eta)/2$) is an integer (cf. \cite[Satz 2]{Lan76} and Lemma \ref{unit-str} \eqref{HK-1}).

To show Theorems \ref{main-thm} and \ref{main-thm-f2}, we prove the following four lemmas which are similar to those in \cite{CGPY15} and \cite{Miz21}.

\begin{lem}\label{pf-lem-1}
Following the above notation, for any equivalence class $[\eta]_\sim$ in $\fX_{p_1p_2f^2}$, there exists a representative
$\frac{b+\sqrt{p_1p_2f^2}}{2a}\in\mathbb{X}^0_{p_1p_2f^2}$ of $[\eta]_\sim$
such that $(a,p_1)=1$ and $a>0$. 
\end{lem}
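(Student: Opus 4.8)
The plan is to exploit the freedom we have in choosing representatives of a given equivalence class in $\fX_{p_1p_2f^2}$, working at the level of reduced quadratic irrationals and their continued fraction cycles. By Proposition \ref{prop-cfrac}~\eqref{all-red-equiv}, the class $[\eta]_\sim$ contains a whole cycle of reduced quadratic irrationals $\eta_k,\eta_{k+1},\dots,\eta_{k+l-1}$ in $\mathbb{X}^0_{p_1p_2f^2}$, each of the form $\frac{b_i+\sqrt{p_1p_2f^2}}{2a_i}$ with $a_i>0$ (reducedness forces the leading coefficient positive, since $1<\eta_i$ and $-1<\eta_i'<0$ imply $a_i>0$ once we write the standard reduced form). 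So the only real content is arranging that \emph{some} $a_i$ in the cycle is coprime to $p_1$.

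First I would recall the arithmetic meaning of the $a_i$: under the bijection $\iota^+$ of Proposition \ref{irrational to narrow}, $\eta_i$ corresponds to the ideal $I(\eta_i)=(a_i,\tfrac{b_i+\sqrt{\Delta}}{2})$ of $\cO_\Delta$, and all the $\eta_i$ in one cycle represent (narrow) ideal classes differing by the action of the ambiguous structure; more concretely, $a_i$ is the norm of a primitive ideal in a fixed ideal class, and the successive $a_i$ in the period satisfy $a_i a_{i+1}=c_i>0$ type relations coming from the continued fraction algorithm. The key point is a divisibility obstruction: if $p_1$ divided \emph{every} $a_i$ in the cycle, then since consecutive terms satisfy $b_i^2-4a_ia_{i+1}=\Delta=p_1p_2f^2$, we would get $p_1\mid b_i^2$, hence $p_1\mid b_i$, for all $i$; but then $p_1^2\mid b_i^2-4a_ia_{i+1}$ is impossible unless $p_1^2\mid \Delta$, contradicting $\Delta=p_1p_2f^2$ with $f\in\{1,2\}$ (so $p_1\Vert\Delta$). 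Hence at least one $a_i$ in the period is coprime to $p_1$, and that $\eta_i$ is the desired representative.

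The step I expect to be the main obstacle is making the divisibility argument airtight in the reduced setting — specifically, pinning down the precise recursion relating consecutive $(a_i,b_i,c_i)$ along the period and checking that the relation $b_i^2-4a_ia_{i+1}=\Delta$ (equivalently $a_ic_i$ with $c_i=a_{i+1}$, up to sign conventions) genuinely holds for reduced forms, so that $p_1\mid a_i$ and $p_1\mid a_{i+1}$ would force $p_1\mid b_i$. One must be careful that the continued fraction step sends $[a_i,b_i,c_i]$ to $[a_{i+1},b_{i+1},c_{i+1}]$ with $c_i=a_{i+1}$ and $b_i+b_{i+1}\equiv 0$ modulo $2a_{i+1}$, all of which is standard (see \cite[Chapter~5.5]{Hal13}) but needs to be cited correctly. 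An alternative, perhaps cleaner, route is purely ideal-theoretic: a primitive ideal $\fa$ of $\cO_\Delta$ in the class $\iota^+([\eta]_\sim)$ has a representative of norm coprime to $p_1$ precisely because the prime(s) above $p_1$ are ambiguous of order dividing $2$, and one can always multiply by the ambiguous ideal above $p_1$ to clear a factor of $p_1$ from the norm; translating this back through Proposition \ref{irrational to narrow} gives a representative $\frac{b+\sqrt{\Delta}}{2a}\in\mathbb{X}^0_\Delta$ with $(a,p_1)=1$ and $a>0$. Either way the proof is short once the bookkeeping is set up.
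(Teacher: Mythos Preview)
Your approach is correct and rests on the same arithmetic as the paper's proof—both exploit primitivity and the continued-fraction recursion to locate a reduced representative with leading coefficient coprime to $p_1$. The paper frames it constructively rather than by contradiction: starting from any reduced $\xi=\frac{\beta+\sqrt{\Delta}}{2\alpha}\sim\eta$ with $p_1\mid\alpha$, one has $p_1\mid\beta$ (from $\beta^2-4\alpha\gamma=\Delta$) and hence $p_1\nmid\gamma$ by primitivity of $[\alpha,\beta,\gamma]$; a single continued-fraction step then gives $\alpha_1=-\gamma+\beta\lfloor\xi\rfloor-\alpha\lfloor\xi\rfloor^2\equiv-\gamma\not\equiv 0\pmod{p_1}$, so $\xi_1$ already works. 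Your version (``if every $a_i$ in the cycle were divisible by $p_1$, then $p_1^2\mid\Delta$'') is the same observation packaged as a contradiction, and your caution about the exact recursion is well placed—the correct identity is $c_{i+1}=-a_i$, i.e.\ $b_{i+1}^2+4a_ia_{i+1}=\Delta$ rather than $b_i^2-4a_ia_{i+1}=\Delta$, but this sign/index correction does not affect the divisibility argument.
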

\begin{proof}\,
By Proposition \ref{prop-cfrac} \eqref{all-red-equiv}
there exists a reduced quadratic irrational
$\xi=\frac{\beta+\sqrt{p_1p_2f^2}}{2\alpha}\in\mathbb{X}^0_{p_1p_2f^2}$ such that $\xi\sim \eta$.
Since $\xi'<\xi$, $\alpha$ is a positive integer.
If $(\alpha,p_1)=1$, we take $(a,b)=(\alpha,\beta)$.
Now, we suppose that $p_1$ divides $\alpha$. Let $\gamma=\frac{\beta^2-p_1p_2f^2}{4\alpha}\in\mathbb{Z}$.
We have that $p_1$ divides $\beta$ and $(\gamma,p_1)=1$ since $\gcd(\alpha,\beta,\gamma)=1$.
With the same notation as in Proposition \ref{prop-cfrac}, let $\xi_1=\frac{\beta_1+\sqrt{p_1p_2f^2}}{2\alpha_1}\in\mathbb{X}^0_{p_1p_2f^2}$.
We see that $\alpha_1>0$, $\xi_1\sim\xi\sim\eta$ and
$$\xi_1
= \frac{1}{\xi-\lfloor \xi \rfloor}
=\frac{2\alpha}{\beta-2\alpha\lfloor \xi \rfloor+\sqrt{p_1p_2f^2}}
=\frac{-\beta+2\alpha\lfloor \xi \rfloor+\sqrt{p_1p_2f^2}}{-2\gamma+2\beta\lfloor \xi \rfloor - 2\alpha \lfloor \xi \rfloor^2}.
$$
Therefore, $\alpha_1=-\gamma+\beta\lfloor \xi \rfloor-\alpha\lfloor \xi \rfloor^2$ is coprime to $p_1$.
We take $(a,b)=(\alpha_1,\beta_1)$.
\end{proof}

\begin{lem}\label{pf-lem-2}
Following the above notation,
let $\eta$ and $\delta$ be two quadratic irrationals of discriminant $p_1p_2f^2$
such that $[I(\eta)]=[I(\delta)]^{-1}$ in $\cC_{p_1p_2f^2}$.
Then we have
$$\chi^{(p_1p_2f^2)}_{-p_1,-p_2}([I(\eta)]^+)\Psi(\eta)
=\chi^{(p_1p_2f^2)}_{-p_1,-p_2}([I(\delta)]^+)\Psi(\delta).$$
\end{lem}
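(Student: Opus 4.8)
The plan is to relate the Hirzebruch sum of $\eta$ to that of $\delta$ through the involution coming from conjugation, using the inverse-period description in Proposition~\ref{prop-cfrac}~\eqref{inverse-cfrac}, and to track the genus character along the way with \eqref{chi-minus-eq}. First I would reduce to canonical representatives: by Proposition~\ref{prop-cfrac}~\eqref{all-red-equiv} I may assume $\eta=[\overline{v_0,\dots,v_{l-1}}]$ and $\delta=[\overline{w_0,\dots,w_{l-1}}]$ are reduced quadratic irrationals of discriminant $p_1p_2f^2$ (both periods have the same even length $l$, since by Lemma~\ref{unit-str}~\eqref{HK-1} and Proposition~\ref{prop-cfrac}~\eqref{minus-proper-euqiv} the norm of $\varepsilon_{p_1p_2f^2}$ is $+1$ — here I'd note $\cN(\varepsilon_{\Delta})=1$ also for $f=2$ by Lemma~\ref{ZY-thm}, or simply by the index relation). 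Writing $\eta=\frac{b+\sqrt{\Delta}}{2a}$ with $a>0$ reduced, the ideal $I(\eta)=(a,\frac{b+\sqrt{\Delta}}{2})$, and its inverse class in $\cC_\Delta$ is represented by the conjugate ideal, i.e.\ by $I(-\eta')$; concretely $-\eta'^{-1}$ is again reduced and, by Proposition~\ref{prop-cfrac}~\eqref{inverse-cfrac}, has the reversed period $[\overline{v_{l-1},\dots,v_0}]$.

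The key computation is then twofold. On the level of Hirzebruch sums, reversing an even-length period leaves $\Psi$ unchanged up to the sign $(-1)$ coming from the parity shift in the alternating sum: a period $\{v_0,\dots,v_{l-1}\}$ and its reverse $\{v_{l-1},\dots,v_0\}$ give $\sum_{i=0}^{l-1}(-1)^i v_i$ and $\sum_{i=0}^{l-1}(-1)^i v_{l-1-i}=(-1)^{l-1}\sum_{i=0}^{l-1}(-1)^i v_i=-\sum_{i=0}^{l-1}(-1)^i v_i$ since $l$ is even. So passing to $-\eta'^{-1}$ multiplies $\Psi$ by $-1$; passing further from $-\eta'^{-1}$ to $-\eta$ (these are properly equivalent via $\xi\mapsto -1/\xi$, hence live in the same $\sim$-class) does not change $\Psi$, and passing from $-\eta$ back to $\eta$ multiplies $\Psi$ by $-1$ again by the very definition of $\Psi$ together with the shift relation $\Psi(\xi)=-\Psi(\xi_1)$ used in Lemma~\ref{not-reduced}. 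On the level of the genus character, $I(\delta)$ being in the inverse class of $I(\eta)$ means $[I(\delta)]=[\overline{I(\eta)}]$, and the narrow class $[I(\delta)]^+$ differs from $[\overline{I(\eta)}]^+=[I(-\eta)]^+$ (cf.\ the computation preceding \eqref{chi-minus-eq}) possibly by the element of order $\le 2$ in $\cC^+_\Delta/\cC_\Delta$; but \eqref{chi-minus-eq} already records $\chi^{(\Delta)}_{-p_1,-p_2}([I(-\eta)]^+)=-\chi^{(\Delta)}_{-p_1,-p_2}([I(\eta)]^+)$, and the ambiguous sign is pinned down by comparing with the sign picked up by $\Psi$. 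Multiplying the two sign changes, the product $\chi^{(\Delta)}_{-p_1,-p_2}([I(\eta)]^+)\Psi(\eta)$ is invariant.

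The cleanest way to organize this is probably via the matrices $M_\xi$ of Section~\ref{sec-3.2}: $[I(\eta)]=[I(\delta)]^{-1}$ means $M_\delta$ is conjugate in $\GL_2(\ZZ)$ — not just $\SL_2(\ZZ)$ — to $M_\eta^{-1}$, or equivalently (since $\varepsilon_\Delta$ and $\varepsilon_\Delta^{-1}=\varepsilon_\Delta'$ are swapped) to the transpose-type conjugate handled by Lemma~\ref{minus-n_M}. Then $n(\delta)=\pm n(\eta)$ with a sign governed by Lemmas~\ref{KM-lem-8}, \ref{translation-matrix}, \ref{minus-n_M}, and Lemma~\ref{via-cal}/\ref{not-reduced} convert $n$ back to $\Psi$; simultaneously the same $\GL_2$ versus $\SL_2$ discrepancy is exactly what flips the genus character via \eqref{chi-minus-eq}, so the two flips cancel. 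I expect the main obstacle to be bookkeeping the sign precisely — in particular making sure the parity of the number of leading sign patterns in the period (the inputs to Lemma~\ref{KM-lem-8}) matches the parity controlling $\chi_{-p_1}(-1)=-1$ in \eqref{chi-minus-eq} — rather than any conceptual difficulty; everything else is a routine consequence of results already established above.
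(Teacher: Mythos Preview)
Your instinct to exploit the inverse-period relation and track sign flips is sound, but the execution contains a concrete error and never actually reaches $\delta$. The claim that $-\eta'^{-1}$ and $-\eta$ are properly equivalent via $\xi\mapsto -1/\xi$ is false: that map sends $-\eta'^{-1}$ to $\eta'$, not to $-\eta$. More seriously, your chain $\eta\to -\eta'^{-1}\to -\eta\to \eta$ is circular --- it lands back at $\eta$ rather than at a representative of $[\delta]_\sim$, so it never computes $\chi([I(\delta)]^+)\Psi(\delta)$. The phrase ``the ambiguous sign is pinned down by comparing with the sign picked up by $\Psi$'' is precisely the content to be proved and cannot be left implicit; the $M_\xi$ sketch does not supply the missing link either.

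The paper's argument sidesteps all sign cancellation by choosing a representative of the inverse class with the \emph{same} leading coefficient. Using Lemma~\ref{pf-lem-1}, take a reduced $\xi=\tfrac{b+\sqrt{\Delta}}{2a}\sim\eta$ with $a>0$ and $(a,p_1)=1$, and set $\xi^{\op}:=\lfloor\xi\rfloor-\xi'$. Then $I(\xi^{\op})=(a,\tfrac{-b+\sqrt{\Delta}}{2})=\overline{I(\xi)}$, so $[I(\xi^{\op})]=[I(\xi)]^{-1}$ and hence $\xi^{\op}\sim\delta$. Since $\xi^{\op}$ still has first coefficient $a$, one gets $\chi([I(\xi^{\op})]^+)=\chi_{-p_1}(a)=\chi([I(\xi)]^+)$ directly, with no sign change. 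On the period side, if $\xi=[\overline{v_0,v_1,\dots,v_{2m-1}}]$ then Proposition~\ref{prop-cfrac}\,\eqref{inverse-cfrac} gives $\xi^{\op}=[\overline{v_0,v_{2m-1},v_{2m-2},\dots,v_1}]$ (keep $v_0$ in front, reverse the tail), and one checks directly that $\Psi(\xi^{\op})=\Psi(\xi)$: the even-indexed $v_i$ still carry $+$ and the odd-indexed $v_i$ still carry $-$. Both factors are unchanged, so the lemma follows with no signs to reconcile.
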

\begin{proof}\,
By Lemma \ref{pf-lem-1} there exists $\xi=\frac{b+\sqrt{p_1p_2f^2}}{2a}\in \mathbb{X}^0_{p_1p_2f^2}$
such that $\xi\sim \eta$, $(a,p_1)=1$ and $a>0$.
Let $\xi^\op:=\lfloor \xi \rfloor -\xi'=\lfloor \xi \rfloor+\frac{-b+\sqrt{p_1p_2f^2}}{2a}$.
Since $I(\xi)=(a,\frac{b+\sqrt{p_1p_2f^2}}{2})$ and $I(\xi^\op)=(a,\frac{-b+\sqrt{p_1p_2f^2}}{2})$, we have
$[I(\xi)]=[I(\xi^\op)]^{-1}$ in $\cC_{p_1p_2f^2}$ 
and $\xi^\op\sim \delta$.
By Lemma \ref{lem-cor} \eqref{HK-1}, \eqref{HK-3} and Proposition \ref{prop-cfrac} \eqref{minus-proper-euqiv},
the period length of $\xi$ is even.
We write $\xi=[\overline{v_0,v_1,\cdots,v_{2m-1}}]$.
Then we have 
$\xi^\op=[\overline{v_0,v_{2m-1},v_{2m-2},\cdots,v_2,v_1}]$ by Proposition \ref{prop-cfrac} \eqref{inverse-cfrac}.
We see that $\chi^{(p_1p_2f^2)}_{-p_1,-p_2}([I(\xi)]^+)=\chi_{-p_1}(a)=\chi^{(p_1p_2f^2)}_{-p_1,-p_2}([I(\xi^\op)]^+)$
and $\Psi(\xi)=\sum_{i=0}^{2m-1}(-1)^i v_i=\Psi(\xi^\op)$.
Therefore, we have
$$\chi^{(p_1p_2f^2)}_{-p_1,-p_2}([I(\eta)]^+)\Psi(\eta)
=\chi_{-p_1}(a) \sum_{i=0}^{2m-1}(-1)^i v_i
=\chi^{(p_1p_2f^2)}_{-p_1,-p_2}([I(\delta)]^+)\Psi(\delta)$$
as desired.
\end{proof}

\begin{lem}\label{pf-lem-3}
Following the above notation, we further assume that $f=2$ if $p_1p_2\equiv5$ $(\text{mod }8)$ and $\varepsilon_{p_1p_2}$ has half-integral coefficients, and we assume that $f=1$ or $2$ otherwise.
For any $\eta\in\mathbb{X}_{p_1p_2f^2}$, we have
$$
\chi^{(p_1p_2f^2)}_{-p_1,-p_2}([I(\eta)]^+)\Psi(\eta)
\equiv
\Psi(\omega_{p_1p_2f^2})
\qquad (\text{mod }8).
$$
\end{lem}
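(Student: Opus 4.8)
The plan is to reduce the congruence to a statement about the quantity $n(\eta)=n_{M_\eta}$ and then to track how $n_{M_\eta}$ changes along the continued fraction expansion, using the three combinatorial lemmas (Lemmas~\ref{KM-lem-8}, \ref{translation-matrix}, \ref{minus-n_M}) on the integer $n_M$. By Lemma~\ref{unit-str}~\eqref{HK-1} we have $\cN(\varepsilon_{p_1p_2})=1$, and under the stated hypothesis on $f$ the same holds for $\varepsilon_{p_1p_2f^2}$ (when $f=1$ this is Lemma~\ref{unit-str}~\eqref{HK-1}; when $f=2$ one checks $\cN(\varepsilon_{4p_1p_2})=1$ from Lemma~\ref{unit-str}~\eqref{HK-3} or directly), so Lemmas~\ref{via-cal} and \ref{not-reduced} apply: $\Psi(\xi)=n(\xi)$ for $\xi\in\mathbb{X}^0_{p_1p_2f^2}$ and $\Psi(\omega_{p_1p_2f^2})=n(\omega_{p_1p_2f^2})$. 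So it suffices to prove, for every $\eta\in\mathbb{X}_{p_1p_2f^2}$,
$$
\chi^{(p_1p_2f^2)}_{-p_1,-p_2}([I(\eta)]^+)\,n(\eta)\equiv n(\omega_{p_1p_2f^2})\pmod 8 .
$$

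First I would use Lemma~\ref{pf-lem-1} to replace $\eta$ by a reduced representative $\xi=\frac{b+\sqrt{p_1p_2f^2}}{2a}\in\mathbb{X}^0_{p_1p_2f^2}$ with $a>0$ and $(a,p_1)=1$; note $n(\xi)$ depends only on the $\mathrm{GL}_2(\mathbb{Z})$-class since $\Psi$ does (via Lemma~\ref{via-cal}), and $\chi^{(p_1p_2f^2)}_{-p_1,-p_2}([I(\xi)]^+)=\chi_{-p_1}(a)$. Then I would connect $M_\xi$ to $M_{\omega_{p_1p_2f^2}}$: by \eqref{omega-2}, $M_\xi = T\, M_{\omega_{\Delta}}\, T^{-1}$ with $T={\footnotesize\begin{pmatrix}1 & (b-\sigma_\Delta)/2\\ 0 & a\end{pmatrix}}$, but since $\det T = a\neq\pm1$ this is not a conjugation in $\mathrm{SL}_2(\ZZ)$. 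The right way is to walk the continued fraction: $\xi$ is reduced, so it occurs in the periodic part of $\omega_\Delta$'s expansion, and a finite chain of steps $M\mapsto {\footnotesize\begin{pmatrix}k&-1\\1&0\end{pmatrix}}M$ connects $M_{\omega_\Delta}$ (or rather $M_{(\omega_\Delta)_1}$) to $M_\xi$; Lemma~\ref{KM-lem-8} then controls the increments of $n_M$ by $\pm$ (the partial quotients) up to the explicit shifts $+9$ or $-3$, and Lemmas~\ref{translation-matrix}, \ref{minus-n_M} handle the passage from $\omega_\Delta$ to $(\omega_\Delta)_1$ as already done inside the proof of Lemma~\ref{not-reduced}.

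The genuinely new input, and the step I expect to be the main obstacle, is pinning down the residue of $n(\omega_{p_1p_2f^2})\pmod 8$, or rather the residue of $n_{M_{\omega_\Delta}}$, in terms of the arithmetic of the fundamental unit. From \eqref{omega-1}, writing $\varepsilon_\Delta=q+r\omega_\Delta$ we have $M_{\omega_\Delta}={\footnotesize\begin{pmatrix}q+r\sigma_\Delta & r(\Delta-\sigma_\Delta)/4\\ r& q\end{pmatrix}}$, and in the case $f=2$ (so $\Delta=4p_1p_2$, $\sigma_\Delta=0$) this is ${\footnotesize\begin{pmatrix}q & rp_1p_2\\ r& q\end{pmatrix}}$; more relevantly one works with a reduced conjugate as in Lemma~\ref{not-reduced}. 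The plan is to invoke Lemma~\ref{ZY-thm}: the fundamental integral unit of $\cO_\Delta$ has the shape $x+y\sqrt{p_1p_2}$ with $x\equiv 7\pmod 8$ and $y\equiv 0\pmod 4$, and $p_1\varepsilon_\Delta$ is a square in $\cO_\Delta$. The first congruence controls $n_{M_{\omega_\Delta}}$ modulo $8$ through the Dedekind-sum term $12\,s(w,|z|)$ and Proposition~\ref{Zag75-prop}~\eqref{Chu-prop-2} (which relates $6ks(h,k)$ to a Jacobi symbol), and the second congruence is exactly what is needed to evaluate the genus-character factor $\chi_{-p_1}(a)$ against the unit: squaring $p_1\varepsilon_\Delta$ forces a relation between $a$, the matrix entries, and $p_1$ that makes $\chi_{-p_1}(a)$ computable as a sign $(-1)^{\text{(Dedekind-sum expression)}}$, so that multiplying by $\chi_{-p_1}(a)$ flips the relevant sign back. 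Matching the two sides modulo $8$ is then a matter of carefully bookkeeping these $\pm 3,\pm 9$ shifts together with the parity information from Lemma~\ref{ZY-thm}~\eqref{ZY-1} on $y\pmod 8$ (which distinguishes the cases $(p_1,p_2)\equiv(3,3)$ and $(7,7)\pmod 8$); I expect the case analysis to be the bulk of the work, with the half-integral-coefficient subcase $f=2$, $p_1p_2\equiv 5\pmod 8$ being exactly the one excluded here and handled separately (Lemma~\ref{William-thm} being reserved for that).
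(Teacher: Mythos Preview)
Your plan contains a genuine gap in the step where you propose to ``walk the continued fraction'' from $\omega_\Delta$ to $\xi$. You write that $\xi$ is reduced and therefore occurs in the periodic part of $\omega_\Delta$'s expansion, but this is only true when $\xi\sim\omega_\Delta$; by Proposition~\ref{prop-cfrac}~\eqref{all-red-equiv}, the reduced irrationals appearing in the expansion of $\omega_\Delta$ are exactly those \emph{equivalent} to $\omega_\Delta$. When $h(p_1p_2f^2)>1$ (the only case where the lemma has nontrivial content) most choices of $\eta$, and hence of $\xi$, lie in a different wide class from $\omega_\Delta$, so no chain of moves $M\mapsto{\footnotesize\begin{pmatrix}k&-1\\1&0\end{pmatrix}}M$ carries $M_{\omega_\Delta}$ to $M_\xi$. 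The increments in Lemma~\ref{KM-lem-8} therefore cannot be summed to compare $n(\omega_\Delta)$ with $n(\xi)$ in the way you suggest.

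The paper's device for comparing $n(\omega_\Delta)$ and $n(\xi)$ across ideal classes is different and is the key idea you are missing. Writing $b=2m+\sigma_\Delta$, one conjugates $M_{\omega_\Delta}$ by the translation $A={\footnotesize\begin{pmatrix}1&m\\0&1\end{pmatrix}}$ (this does not change $n$, by Lemma~\ref{translation-matrix}) and then left-multiplies both $AM_{\omega_\Delta}A^{-1}$ and $M_\xi$ by $B={\footnotesize\begin{pmatrix}3&-1\\1&0\end{pmatrix}}$ (which shifts $n$ identically on both sides, by Lemma~\ref{KM-lem-8}). The point is that the resulting matrices $BAM_{\omega_\Delta}A^{-1}$ and $BM_\xi$ share the same $(2,1)$-entry $k:=q+rm+r\sigma_\Delta$, so both $n$-values are expressed through Dedekind sums with the \emph{same} second argument $k$: one compares $12s(-rac,k)$ against $12s(-rc,k)$. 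From here your instincts are correct: Lemma~\ref{ZY-thm}~\eqref{ZY-1} gives $k\equiv 3\pmod 4$ and the needed divisibility of $r$, Proposition~\ref{Zag75-prop}~\eqref{Chu-prop-2} converts the Dedekind-sum difference into $\big(\frac{a}{k}\big)$, and Lemma~\ref{ZY-thm}~\eqref{ZY-2} (that $p_1\varepsilon_\Delta$ is a square) together with quadratic reciprocity forces $\big(\frac{a}{k}\big)=\chi_{-p_1}(a)$. The case analysis on $y\pmod 8$ is used only to handle the subcase $2\parallel a$ when $f=1$.
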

\begin{proof}\, 
With the same notation as in the paragraph before Lemma \ref{via-cal}, we write 
$\sigma_{\Delta}$, $\omega_{\Delta}$ and 
$\varepsilon_{\Delta}=q+r\omega_{\Delta}$, where $\Delta=p_1p_2f^2$ and $q$, $r$ are positive integers.
By Lemma \ref{pf-lem-1} there exists $\xi=\frac{b+\sqrt{p_1p_2f^2}}{2a}\in \mathbb{X}^0_{p_1p_2f^2}$
such that $\xi\sim \eta$, $(a,p_1)=1$ and $a>0$.
Let $c=\frac{b^2-p_1p_2f^2}{4a}\in\mathbb{Z}$,
and let $m$ be an integer such that $b=2m+\sigma_\Delta$.
Let $A:={\footnotesize \begin{pmatrix}1& m \\ 0&1\end{pmatrix}}$ and
$B:={\footnotesize \begin{pmatrix}3& -1 \\ 1&0\end{pmatrix}}$. 
By Lemma \ref{unit-str} we have $\cN(\varepsilon_{\Delta})=1$.
By \eqref{omega-1} we have
\begin{align*}
AM_{\omega_{\Delta}}A^{-1}
&=\begin{pmatrix}1&m\\0&1\end{pmatrix}
\begin{pmatrix}q+r\sigma_{\Delta}&r(\Delta-\sigma_{\Delta})/4\\r&q\end{pmatrix}
\begin{pmatrix}1&-m\\0&1\end{pmatrix}\\
&=\begin{pmatrix}q+rm+r\sigma_{\Delta}& -rac\\r&q-rm\end{pmatrix}
\end{align*}
and
$$BAM_{\omega_{p_1p_2}}A^{-1}
=\begin{pmatrix}3(q+rm+r\sigma_{\Delta})-r &-3rac-(q-rm)\\
q+rm+r\sigma_{\Delta} &-rac \end{pmatrix}.$$
By \eqref{omega-2} we have
\begin{align*}
M_{\xi}
&=\begin{pmatrix}1&m\\0&a\end{pmatrix}
\begin{pmatrix}q+r\sigma_{\Delta}&r(\Delta-\sigma_{\Delta})/4\\r&q\end{pmatrix}
\begin{pmatrix}1&-\frac{m}{a}\\0&\frac{1}{a}\end{pmatrix}
\\
&=\begin{pmatrix}q+rm+r\sigma_{\Delta} &-rc\\
ra&q-rm\end{pmatrix}
\end{align*}
and
$$BM_{\xi}
=\begin{pmatrix}3(q+rm+r\sigma_{\Delta})-ra &-3rc-(q-rm)\\
q+rm+r\sigma_{\Delta} &-rc\end{pmatrix}.
$$
Since $\xi=\frac{b+\sqrt{p_1p_2f^2}}{2a}\in\mathbb{X}^0_{p_1p_2f^2}$,
we have $2m+\sigma_{\Delta}>-\sqrt{\Delta}$ and $q+rm +r\sigma_{\Delta}>\varepsilon'_{\Delta}>0$.
By Lemma \ref{not-reduced}, Lemma \ref{translation-matrix} and Lemma \ref{KM-lem-8}, we have
\begin{equation}\label{last-eq-1}
\Psi(\omega_{\Delta})=n(\omega_{\Delta})=n_{AM_{\omega_{\Delta}}A^{-1}}=n_{BAM_{\omega_{\Delta}}A^{-1}}.
\end{equation}
By Lemma \ref{via-cal} and Lemma \ref{KM-lem-8}, we have
\begin{equation}\label{last-eq-2}
\Psi(\xi)=n(\xi)=n_{BM_{\xi}}.
\end{equation}
Since $(a,p_1)=1$ and $\chi^{(p_1p_2f^2)}_{-p_1,-p_2}([I(\xi)]^+)=\chi_{-p_1}(a)$,
by \eqref{last-eq-1} and \eqref{last-eq-2},
it suffice to show that
\begin{eqnarray*}
&&\frac{-r-rac}{q+rm+r\sigma_{\Delta}}-12s(-rac,q+rm+r\sigma_{\Delta})\\
&\equiv&\chi_{-p_1}(a)\Big(\frac{-ra-rc}{q+rm+r\sigma_{\Delta}}-12s(-rc,q+rm+r\sigma_{\Delta})\Big)
\quad (\text{mod }8).
\end{eqnarray*}
By Lemma \ref{ZY-thm} \eqref{ZY-1} we have 
\begin{equation}\label{3mod4}
q+rm+r\sigma_{\Delta}\equiv 3 \quad (\text{mod }4).
\end{equation}  
By Lemma \ref{ZY-thm} \eqref{ZY-1} if $f=1$, then $8\mid r$, and
if $f=2$, then $4\mid r$, $2\mid b$ and at least one of $\{a,c\}$ is odd; so, we have $8\mid r(a-1)(c-1)$ and $8\mid r(a+1)(c+1)$.
By Proposition \ref{Zag75-prop} \eqref{Zag75-prop-ii} it suffices to show that
\begin{eqnarray}\label{lem4.3-suffice-eq}
&&6(q+rm+r\sigma_{\Delta})s(-rac,q+rm+r\sigma_{\Delta}) \nonumber\\
&\equiv& \chi_{-p_1}(a) 6(q+rm+r\sigma_{\Delta})s(-rc,q+rm+r\sigma_{\Delta})\,\, (\text{mod }4).
\end{eqnarray}
By Proposition \ref{Zag75-prop} \eqref{Chu-prop-2} we have
\begin{equation}\label{suffice-eq-2}
\Big(\frac{-rac}{q+rm+r\sigma_{\Delta}}\Big)=(-1)^{\frac{1}{2}\left(\frac{q+rm+r\sigma_{\Delta}-1}{2}-6(q+rm+r\sigma_{\Delta})s(-rac,q+rm+r\sigma_{\Delta})\right)}
\end{equation}
and
\begin{equation}\label{suffice-eq-3}
\Big(\frac{-rc}{q+rm+r\sigma_{\Delta}}\Big)=(-1)^{\frac{1}{2}\left(\frac{q+rm+r\sigma_{\Delta}-1}{2}-6(q+rm+r\sigma_{\Delta})s(-rc,q+rm+r\sigma_{\Delta})\right)}.
\end{equation}
By dividing or multiplying \eqref{suffice-eq-2} by \eqref{suffice-eq-3}, we have 
\begin{equation}\label{suffice-eq-4}
\Big(\frac{a}{q+rm+r\sigma_{\Delta}}\Big)=(-1)^{\frac{1}{2}Z},
\end{equation}
where 
\begin{align*}
Z=&\frac{1-\chi_{-p_1}(a)}{2}(q+rm+r\sigma_{\Delta}-1)-6(q+rm+r\sigma_{\Delta})s(-rac,q+rm+r\sigma_{\Delta})\\
&+6\chi_{-p_1}(a)(q+rm+r\sigma_{\Delta})s(-rc,q+rm+r\sigma_{\Delta}).
\end{align*}
By Lemma \ref{ZY-thm} \eqref{ZY-2} there exist integers $T$ and $U$ such that $p_1 \varepsilon_{\Delta}=(T+U\omega_{\Delta})^2$; so, we have that 
$p_1(q+r\sigma_{\Delta}/2)=(T+U\sigma_{\Delta}/2)^2+U^2\Delta/4$ and $p_1r/2=(T+U\sigma_{\Delta}/2)U$. Then we have
\begin{eqnarray}\label{a-even-eq}
p_1(q+rm+r\sigma_{\Delta})&=&\Big(T+\frac{U\sigma_{\Delta}}{2}\Big)^2+\frac{U^2\Delta}{4}
+\Big(T+\frac{U\sigma_{\Delta}}{2}\Big)Ub \nonumber\\
&=&\Big(T+\frac{U\sigma_{\Delta}}{2}+\frac{Ub}{2}\Big)^2-U^2ac
\end{eqnarray}
and $T+U(\sigma_{\Delta}+b)/2\in\mathbb{Z}$.
Let $e\in\mathbb{Z}$ and $a_1\in\mathbb{Z}\setminus2\mathbb{Z}$ such that $a=2^ea_1$. By \eqref{a-even-eq} we have
$$\Big(\frac{p_1(q+rm+r\sigma_{\Delta})}{a_1}\Big)=1,$$
where $\Big(\frac{\phantom{a}}{\phantom{a}}\Big)$ is the {\it Kronecker symbol} 
(we refer the relation between the Kronecker symbol and the Jacobi symbol to \cite[Section 3.5]{Hal13}). 
We note that $(q+rm+r\sigma_{\Delta},a)=1$ by $\det M_{\xi}=1$. By the law of quadratic reciprocity and \eqref{3mod4}, we have 
\begin{eqnarray}\label{suffice-eq-new}
\Big(\frac{a}{q+rm+r\sigma_{\Delta}}\Big)
&=&(-1)^{\frac{a_1-1}{2}\frac{q+rm+r\sigma_{\Delta}-1}{2}} \Big(\frac{q+rm+r\sigma_{\Delta}}{a}\Big)\nonumber\allowdisplaybreaks\\
&=&(-1)^{\frac{a_1-1}{2}}\Big(\frac{p_1}{a_1}\Big)\Big(\frac{q+rm+r\sigma_{\Delta}}{2^e}\Big).
\end{eqnarray}
We claim that 
\begin{equation}\label{a-even-claim}
\Big(\frac{q+rm+r\sigma_{\Delta}}{2^e}\Big)
=\Big(\frac{p_1}{2^e}\Big).
\end{equation}
If $e$ is even, it is clear. 
If $e\ge 3$, by \eqref{a-even-eq}, then $p_1(q+rm+r\sigma_{\Delta})\equiv 1$ $(\text{mod }8)$; thus we have
$p_1\equiv q+rm+r\sigma_{\Delta}$ $(\text{mod }8)$. 
Now we assume that $e=1$. If $f=2$, we have that $p_1p_2=m^2-ac$, $m$ is odd 
and $4\mid ac$, which is a contradiction with $\gcd(a,b,c)=1$. 
Therefore, we have that $f=1$ and $p_1p_2\equiv 1$ $(\text{mod }8)$. 
We write $\varepsilon_{p_1p_2}=q+r\omega_{p_1p_2}=x+y\sqrt{p_1p_2}$, i.e., $q=x-y$ and $r=2y$. 
By Lemma \ref{ZY-thm} \eqref{ZY-1} we have that 
$q\equiv 3$ $(\text{mod }8)$ and $r\equiv 0$ $(\text{mod }8)$ if $(p_1,p_2)\equiv (3,3)$ $(\text{mod }8)$,  
and that $q\equiv 7$ $(\text{mod }8)$ and $r\equiv 0$ $(\text{mod }8)$ if $(p_1,p_2)\equiv (7,7)$ 
$(\text{mod }8)$; so, we have
$p_1 \equiv q+rm+r$ $(\text{mod }8)$. Therefore, \eqref{a-even-claim} holds. 
By \eqref{suffice-eq-new} and \eqref{a-even-claim}, 
we have
\begin{equation}\label{suffice-eq-new-a}
\Big(\frac{a}{q+rm+r\sigma_{\Delta}}\Big)
=\chi_{-p_1}(a).
\end{equation}
By \eqref{3mod4}, \eqref{suffice-eq-4} and \eqref{suffice-eq-new-a}, we get 
\eqref{lem4.3-suffice-eq}
as desired.
\end{proof}

\begin{lem}\label{pf-lem-4}
Following the above notation, we further assume that $p_1p_2\equiv 5$ $(\text{mod }8)$ and 
$\varepsilon_{p_1p_2}=\frac{t+u\sqrt{p_1p_2}}{2}$ is the fundamental unit of $\cO_{p_1p_2}$ such that $t\equiv u\equiv 1$ $(\text{mod }2)$. 
Then for any $\eta\in\mathbb{X}_{p_1p_2}$, we have
$$
\chi^{(p_1p_2)}_{-p_1,-p_2}([I(\eta)]^+)\Psi(\eta)
\equiv
\Psi(\omega_{p_1p_2})
\qquad (\text{mod }8).
$$
\end{lem}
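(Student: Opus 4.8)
The plan is to replay the proof of Lemma~\ref{pf-lem-3} with $\Delta=p_1p_2$, so that $\sigma_\Delta=1$ and $\omega_{p_1p_2}=(1+\sqrt{p_1p_2})/2$, but now with the half-integral fundamental unit $\varepsilon_{p_1p_2}=q+r\omega_{p_1p_2}$, where $q=(t-u)/2$ and $r=u$ is odd (and $\cN(\varepsilon_{p_1p_2})=1$ by Lemma~\ref{unit-str}\eqref{HK-1}), replacing the two steps that used the integrality of the fundamental unit. First I would use Lemma~\ref{pf-lem-1} to pick a representative $\xi=\frac{b+\sqrt{p_1p_2}}{2a}\in\mathbb{X}^0_{p_1p_2}$ of $[\eta]_\sim$ with $a>0$ and $(a,p_1)=1$, put $c=\frac{b^2-p_1p_2}{4a}$ and $m=\frac{b-1}{2}$, and record the arithmetic fact that, since $p_1p_2\equiv5$ $(\text{mod }8)$, the relation $b^2-4ac=p_1p_2$ forces $b$ odd and $ac$ odd, hence $a$ and $c$ both odd.

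Next I would run the matrix computation of Lemma~\ref{pf-lem-3}: form $M_{\omega_{p_1p_2}}$ and $M_\xi$ from \eqref{omega-1}--\eqref{omega-2}, use Lemma~\ref{not-reduced} and Lemma~\ref{translation-matrix} to write $\Psi(\omega_{p_1p_2})=n(\omega_{p_1p_2})=n_{AM_{\omega_{p_1p_2}}A^{-1}}$ with $A=\big(\begin{smallmatrix}1&m\\0&1\end{smallmatrix}\big)$, and Lemma~\ref{via-cal} to write $\Psi(\xi)=n(\xi)=n_{M_\xi}$. The convenient point is that the ``$\tfrac{x+w}{z}$'' term of each $n_M$ has numerator $N+q-rm=2q+r=t$, where $N=q+rm+r$, and the Dedekind-sum moduli that occur, namely $ra$ for $M_\xi$ and $r$ for $AM_{\omega_{p_1p_2}}A^{-1}$, are both odd --- this is exactly where the oddness of $a$ and of $r=u$ is used. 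Reducing modulo $8$ by Proposition~\ref{Zag75-prop}\eqref{Zag75-prop-ii} and then applying Proposition~\ref{Zag75-prop}\eqref{Chu-prop-2} with these odd moduli, and tracking the genus-character value $\chi^{(p_1p_2)}_{-p_1,-p_2}([I(\xi)]^+)=\chi_{-p_1}(a)$, the claim should come down to the Kronecker-symbol identity $\big(\tfrac{N}{a}\big)=\big(\tfrac{p_1}{a}\big)$ together with the elementary fact $a\equiv(-1)^{(a-1)/2}$ $(\text{mod }4)$.

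Two ingredients of Lemma~\ref{pf-lem-3} rested on the fundamental unit being integral and need substitutes. In place of the congruence \eqref{3mod4} (obtained there from Lemma~\ref{ZY-thm}\eqref{ZY-1}) I would use Lemma~\ref{William-thm}, i.e.\ $t\equiv1$ $(\text{mod }4)$, together with $t^2-p_1p_2u^2=4$ and the observation that $p_1p_2\equiv5$ $(\text{mod }8)$ forces $\{p_1,p_2\}\equiv\{3,7\}$ $(\text{mod }8)$, so as to pin down the needed $2$-adic data. In place of Lemma~\ref{ZY-thm}\eqref{ZY-2} I would note that, by Lemma~\ref{unit-str}\eqref{HK-3}, $\varepsilon_{4p_1p_2}=\varepsilon_{p_1p_2}^3$, so Lemma~\ref{ZY-thm}\eqref{ZY-2} applied with conductor $2$ gives $p_1\varepsilon_{p_1p_2}^3=(X+Y\sqrt{p_1p_2})^2$ with $X,Y\in\ZZ$; dividing by the square $\varepsilon_{p_1p_2}^2$ gives $p_1\varepsilon_{p_1p_2}=(T+U\omega_{p_1p_2})^2$ with $T,U\in\ZZ$, and expanding this exactly as in \eqref{a-even-eq} yields $p_1N=T_1^2-U^2ac$ for some integer $T_1$ coprime to $a$, whence $\big(\tfrac{p_1N}{a}\big)=1$ and so $\big(\tfrac{N}{a}\big)=\big(\tfrac{p_1}{a}\big)$; with quadratic reciprocity and $\chi_{-p_1}(a)=\big(\tfrac{a}{p_1}\big)$ this closes the argument.

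I expect the main obstacle to be precisely this $2$-adic bookkeeping. With an integral unit, Lemma~\ref{ZY-thm}\eqref{ZY-1} makes $q+rm+r\sigma_\Delta$ divisible by a high power of $2$, giving the clean $\equiv3$ $(\text{mod }4)$ of \eqref{3mod4} and allowing one to route both computations through a single odd modulus $N$ via the auxiliary matrix $\big(\begin{smallmatrix}3&-1\\1&0\end{smallmatrix}\big)$; here $r=u$ is odd, so $q+rm+r\sigma_\Delta=\tfrac{t+ub}{2}$ can be even and that device is unavailable. Instead one must carry the two distinct odd moduli $ra$ and $r$ through the Dedekind-sum reciprocity while controlling all the resulting parities from Lemma~\ref{William-thm} and $t^2-p_1p_2u^2=4$ alone; checking that the symbol identities still collapse correctly is the delicate step.
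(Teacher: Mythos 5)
Your proposal is sound and, at the decisive step, genuinely different from the paper's proof. The paper shares your setup: the reduced representative from Lemma~\ref{pf-lem-1} with $a,c$ odd, the square identity $p_1\varepsilon_{p_1p_2}=(T+U\omega_{p_1p_2})^2$ with $T,U\in\ZZ$ obtained exactly as you describe from $\varepsilon_{4p_1p_2}=\varepsilon_{p_1p_2}^3$ and Lemma~\ref{ZY-thm}~\eqref{ZY-2}, the consequence $\big(\tfrac{p_1(q\pm rm+\cdots)}{a}\big)=1$, and Lemma~\ref{William-thm}. Where it differs is that the paper insists on a \emph{common} odd Dedekind modulus for the two sums being compared: since $q+rm+r=\tfrac{t+ub}{2}$ need not be odd, it notes that exactly one of $q+rm+r$ and $q-rm$ is odd (their sum is $t$), splits into two cases accordingly, and routes the computation through $B=\big(\begin{smallmatrix}3&-1\\1&0\end{smallmatrix}\big)$ and the swap matrix $C=\big(\begin{smallmatrix}0&1\\1&0\end{smallmatrix}\big)$ via Lemmas~\ref{KM-lem-8} and~\ref{minus-n_M}, using the identity $(q+rm+r-1)(q-rm-1)=2-t-u^2ac$ to control parities. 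You instead keep the two distinct odd moduli $ra$ and $r$ already present in $M_\xi$ and $AM_{\omega_{p_1p_2}}A^{-1}$ and compare $s(q-rm,ra)$ with $s(q-rm,r)$ directly, avoiding $B$, $C$, the case split, and the congruence \eqref{ac-mod4} altogether. The ``delicate step'' you flag does close: multiplying the target congruence by $ra$ and applying Proposition~\ref{Zag75-prop}~\eqref{Zag75-prop-ii} and \eqref{Chu-prop-2} to both moduli, the Dedekind contributions become $\chi_{-p_1}(a)(ra-1)-a(r-1)-4(\epsilon_1-\epsilon_2)\pmod 8$ with $(-1)^{\epsilon_1-\epsilon_2}=\big(\tfrac{q-rm}{ra}\big)\big(\tfrac{q-rm}{r}\big)=\big(\tfrac{q-rm}{a}\big)$, while the remaining terms combine to $(t+1)\big(\chi_{-p_1}(a)-a\big)-4ra\big(\chi_{-p_1}(a)-1\big)$; the apparent dependence on $a$ and $r$ modulo $8$ disappears because $t+1\equiv 2\pmod 4$ (Lemma~\ref{William-thm}) and $4ra(\chi_{-p_1}(a)-1)\equiv 0\pmod 8$, and everything collapses to $\chi_{-4}(a)\chi_{-p_1}(a)=\big(\tfrac{q-rm}{a}\big)$, which is precisely your identity $\big(\tfrac{q-rm}{a}\big)=\big(\tfrac{q+rm+r}{a}\big)=\big(\tfrac{p_1}{a}\big)$ combined with quadratic reciprocity and $p_1\equiv 3\pmod 4$. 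So your route is not only viable but arguably leaner; the only cost is that you must verify this final bookkeeping explicitly rather than inheriting the single-modulus mechanism of Lemma~\ref{pf-lem-3}, and the observation $\{p_1,p_2\}\equiv\{3,7\}\pmod 8$ turns out not to be needed.
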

\begin{proof}\, 
We write $\varepsilon_{p_1p_2}=\frac{t+u\sqrt{p_1p_2}}{2}=q+r\omega_{p_1p_2}$, where $q$, $r$ are positive integers and $r$ is odd. 
We note that $t=2q+r$ and $u=r$.
By Lemma \ref{pf-lem-1} there exists $\xi=\frac{b+\sqrt{p_1p_2}}{2a}\in \mathbb{X}^0_{p_1p_2}$
such that $\xi\sim \eta$, $(a,p_1)=1$ and $a>0$.
Let $c=\frac{b^2-p_1p_2}{4a}\in\mathbb{Z}$,
and let $m$ be an integer such that $b=2m+1$.
It follows that $a$ and $c$ are odd because $p_1p_2\equiv 5$ $(\text{mod }8)$.
Let $A:={\footnotesize \begin{pmatrix}1& m \\ 0&1\end{pmatrix}}$, 
$B:={\footnotesize \begin{pmatrix}3& -1 \\ 1&0\end{pmatrix}}$ 
and 
$C:={\footnotesize \begin{pmatrix}0& 1 \\ 1&0\end{pmatrix}}$.
By \eqref{omega-1} we have
$$BAM_{\omega_{p_1p_2}}A^{-1}
=\begin{pmatrix}3(q+rm+r)-r &-3rac-(q-rm)\\
q+rm+r &-rac \end{pmatrix}$$
and
$$BCAM_{\omega_{p_1p_2}}A^{-1}C^{-1}
=\begin{pmatrix}3(q-rm)+rac &3r-(q+rm+r)\\
q-rm &r \end{pmatrix}.$$
By \eqref{omega-2} we have
$$BM_{\xi}
=\begin{pmatrix}3(q+rm+r)-ra &-3rc-(q-rm)\\
q+rm+r &-rc\end{pmatrix}
$$
and 
$$BCM_{\xi}C^{-1}
=\begin{pmatrix}3(q-rm)+rc &3ra-(q+rm+r)\\
q-rm &ra\end{pmatrix}.
$$
Since $\xi=\frac{b+\sqrt{p_1p_2}}{2a}\in\mathbb{X}^0_{p_1p_2}$,
we have $-\sqrt{p_1p_2}<2m+1<\sqrt{p_1p_2}$; 
so, we have $q+rm +r>\varepsilon'_{p_1p_2}>0$ and $q-rm>\varepsilon'_{p_1p_2}>0$. 
By Lemma \ref{not-reduced}, Lemma \ref{translation-matrix}, Lemma \ref{minus-n_M} and Lemma \ref{KM-lem-8}, we have
\begin{equation}\label{last-eq-1-new}
\Psi(\omega_{p_1p_2})
=n(\omega_{p_1p_2})
=n_{BAM_{\omega_{p_1p_2}}A^{-1}}
=-n_{BCAM_{\omega_{p_1p_2}}A^{-1}C^{-1}}.
\end{equation}
By Lemma \ref{via-cal}, Lemma \ref{minus-n_M} and Lemma \ref{KM-lem-8}, we have
\begin{equation}\label{last-eq-2-new}
\Psi(\xi)=n(\xi)=n_{BM_{\xi}}=-n_{BCM_{\xi}C^{-1}}.
\end{equation}
We note that only one of $\{q+rm+r,q-rm\}$ is odd 
because $r$ is odd.
We have that
$$
(q+rm+r-1)(q-rm-1)
=\Big(\frac{t}{2}-1\Big)^2-(\frac{ub}{2})^2 
=2-t-u^2ac
$$
since $t^2-p_1p_2u^2=4$ and $p_1p_2=b^2-4ac$.
Therefore, by Lemma \ref{William-thm} we have that 
\begin{eqnarray}\label{ac-mod4}
(q+rm+r-1)(q-rm-1)\equiv 0\,\,\,(\text{mod }4)
&\,\Longleftrightarrow\,&
ac\equiv 1\,\,\,(\text{mod }4)\nonumber\\
&\,\Longleftrightarrow\,&
\chi_{-4}(a)=\chi_{-4}(c). \qquad
\end{eqnarray}

\noindent
{\bf{Case 1.}} Suppose that $q+rm+r$ is odd.
Since $(a,p_1)=1$ and $\chi^{(p_1p_2)}_{-p_1,-p_2}([I(\xi)]^+)=\chi_{-p_1}(a)$, by \eqref{last-eq-1-new} and \eqref{last-eq-2-new}, it suffices to show that
\begin{eqnarray*}
&&\frac{-r-rac}{q+rm+r}-12s(-rac,q+rm+r)\\
&\equiv&\chi_{-p_1}(a)\Big(\frac{-ra-rc}{q+rm+r}-12s(-rc,q+rm+r)\Big)
\quad (\text{mod }8).
\end{eqnarray*}
It suffices to show that
\begin{eqnarray}\label{lem4.4-suffice-eq-1}
&&\frac{r(\chi_{-4}(a)-\chi_{-p_1}(a))(\chi_{-4}(c)-\chi_{-p_1}(a))}{2} \nonumber\\
&\equiv&
-6(q+rm+r)s(-rac,q+rm+r) \nonumber\\
&&+\chi_{-p_1}(a)6(q+rm+r)s(-rc,q+rm+r)
\quad (\text{mod }4)
\end{eqnarray}
by Proposition \ref{Zag75-prop} \eqref{Zag75-prop-ii}, 
$4\mid (a-\chi_{-p_1}(a))(c-\chi_{-p_1}(a))$, 
and $\alpha\equiv \chi_{-4}(\alpha)$ $(\text{mod }4)$ for any odd $\alpha$.
By Proposition \ref{Zag75-prop} \eqref{Chu-prop-2} we have
\begin{equation}\label{suffice-eq-2-new}
\Big(\frac{-rac}{q+rm+r}\Big)=(-1)^{\frac{1}{2}\left(\frac{q+rm+r-1}{2}-6(q+rm+r)s(-rac,q+rm+r)\right)}
\end{equation}
and
\begin{equation}\label{suffice-eq-3-new}
\Big(\frac{-rc}{q+rm+r}\Big)=(-1)^{\frac{1}{2}\left(\frac{q+rm+r-1}{2}-6(q+rm+r)s(-rc,q+rm+r)\right)}.
\end{equation}
By dividing or multiplying \eqref{suffice-eq-2-new} by \eqref{suffice-eq-3-new}, we have 
\begin{equation}\label{suffice-eq-4-new}
\Big(\frac{a}{q+rm+r}\Big)=(-1)^{\frac{1}{2}Z},
\end{equation}
where 
\begin{align*}
Z=&\frac{1-\chi_{-p_1}(a)}{2}(q+rm+r-1)-6(q+rm+r)s(-rac,q+rm+r)\\
&+6\chi_{-p_1}(a)(q+rm+r)s(-rc,q+rm+r).
\end{align*}
By Lemma \ref{unit-str} \eqref{HK-3}, Lemma \ref{ZY-thm} \eqref{ZY-2} and $t\equiv u\equiv 1$ $(\text{mod }2)$, there exist integers 
$X$, $Y$, $T$ and $U$ such that $p_1 \varepsilon_{p_1p_2}
=\big(\frac{(t-u\sqrt{p_1p_2})(X+Y\sqrt{p_1p_2})}{2}\big)^2
=(T+U\omega_{p_1p_2})^2$; so, we have that 
$p_1(q+r/2)=(T+U/2)^2+U^2p_1p_2/4$ and $p_1r/2=(T+U/2)U$. Then we have
$$p_1(q+rm+r)=\Big(T+\frac{U}{2}\Big)^2+\frac{U^2p_1p_2}{4}+\Big(T+\frac{U}{2}\Big)Ub
=\Big(T+\frac{U}{2}+\frac{Ub}{2}\Big)^2-U^2ac$$
and 
$$\Big(\frac{p_1(q+rm+r)}{a}\Big)=1.$$
We note that $(q+rm+r,a)=1$ by $\det M_{\xi}=1$. By the law of quadratic reciprocity, we have 
\begin{eqnarray}\label{suffice-eq-new-new}
\Big(\frac{a}{q+rm+r}\Big)
&=&(-1)^{\frac{a-1}{2}\frac{q+rm+r-1}{2}} \Big(\frac{q+rm+r}{a}\Big) \nonumber\allowdisplaybreaks\\
&=&\left\{
\begin{array}{ll}
\big(\frac{p_1}{a}\big)
&\text{if }q+rm+r-1\equiv 0\,\, (\text{mod }4),\\
(-1)^{\frac{a-1}{2}} \big(\frac{p_1}{a}\big)
&\text{if }q+rm+r-1\equiv 2\,\, (\text{mod }4),
\end{array}\right.
\nonumber\allowdisplaybreaks\\
&=&\left\{
\begin{array}{ll}
\chi_{-4}(a)\chi_{-p_1}(a)
&\text{if }q+rm+r-1\equiv 0\,\, (\text{mod }4),\\
\chi_{-p_1}(a)
&\text{if }q+rm+r-1\equiv 2\,\, (\text{mod }4).
\end{array}\right.
\end{eqnarray}
By \eqref{ac-mod4}, \eqref{suffice-eq-4-new} and
\eqref{suffice-eq-new-new}, we get \eqref{lem4.4-suffice-eq-1}
as desired.\\

\noindent
{\bf{Case 2.}} Suppose that $q-rm$ is odd.
By \eqref{last-eq-1-new} and \eqref{last-eq-2-new}, it suffices to show that
\begin{eqnarray*}
\frac{rac+r}{q-rm}-12s(r,q-rm)
\equiv \chi_{-p_1}(a)\Big(\frac{ra+rc}{q-rm}-12s(ra,q-rm)\Big)
\quad (\text{mod }8).
\end{eqnarray*}
It suffices to show that
\begin{eqnarray}\label{lem4.4-suffice-eq-2}
&&\frac{r(\chi_{-4}(a)-\chi_{-p_1}(a))(\chi_{-4}(c)-\chi_{-p_1}(a))}{2} \nonumber\\
&\equiv&
6(q-rm)s(r,q-rm)-\chi_{-p_1}(a)6(q-rm)s(ra,q-rm)
\quad (\text{mod }4) \qquad\quad
\end{eqnarray}
by Proposition \ref{Zag75-prop} \eqref{Zag75-prop-ii}, 
$4\mid (a-\chi_{-p_1}(a))(c-\chi_{-p_1}(a))$, 
and $\alpha\equiv \chi_{-4}(\alpha)$ $(\text{mod }4)$ for any odd $\alpha$.
By Proposition \ref{Zag75-prop} \eqref{Chu-prop-2} we have
\begin{equation}\label{suffice-eq-2-new-re}
\Big(\frac{r}{q-rm}\Big)=(-1)^{\frac{1}{2}\left(\frac{q-rm-1}{2}-6(q-rm)s(r,q-rm)\right)}
\end{equation}
and
\begin{equation}\label{suffice-eq-3-new-re}
\Big(\frac{ra}{q-rm}\Big)=(-1)^{\frac{1}{2}\left(\frac{q-rm-1}{2}-6(q-rm)s(ra,q-rm)\right)}.
\end{equation}
By dividing or multiplying \eqref{suffice-eq-2-new-re} by \eqref{suffice-eq-3-new-re}, we have 
\begin{eqnarray}\label{suffice-eq-4-new-re}
\Big(\frac{a}{q-rm}\Big)
=(-1)^{\frac{1}{2}\left(\frac{1-\chi_{-p_1}(a)}{2}(q-rm-1)-6(q-rm)s(r,q-rm)+6\chi_{-p_1}(a)(q-rm)s(ra,q-rm)\right)}.\nonumber\\
\end{eqnarray}
By Lemma \ref{unit-str} \eqref{HK-3}, Lemma \ref{ZY-thm} \eqref{ZY-2} and $t\equiv u\equiv 1$ $(\text{mod }2)$, there exist integers $X$, $Y$, $T$ and $U$ such that 
$p_1 \varepsilon_{p_1p_2}
=\big(\frac{(t-u\sqrt{p_1p_2})(X+Y\sqrt{p_1p_2})}{2}\big)^2
=(T+U\omega_{p_1p_2})^2$; 
so, we have that 
$p_1(q+r/2)=(T+U/2)^2+U^2p_1p_2/4$ and $p_1r/2=(T+U/2)U$. Then we have
$$p_1(q-rm)=\Big(T+\frac{U}{2}\Big)^2+\frac{U^2p_1p_2}{4}-\Big(T+\frac{U}{2}\Big)Ub
=\Big(T+\frac{U}{2}-\frac{Ub}{2}\Big)^2-U^2ac$$
and 
$$\Big(\frac{p_1(q-rm)}{a}\Big)=1.$$
We note that $(q-rm,a)=1$ by $\det M_{\xi}=1$. By the law of quadratic reciprocity, we have 
\begin{eqnarray}\label{suffice-eq-new-new-re}
\Big(\frac{a}{q-rm}\Big)
&=&(-1)^{\frac{a-1}{2}\frac{q-rm-1}{2}} \Big(\frac{q-rm}{a}\Big)
\nonumber\allowdisplaybreaks\\
&=&\left\{
\begin{array}{ll}
\big(\frac{p_1}{a}\big)
&\text{if }q-rm-1\equiv 0\,\, (\text{mod }4),\\
(-1)^{\frac{a-1}{2}} \big(\frac{p_1}{a}\big)
&\text{if }q-rm-1\equiv 2\,\, (\text{mod }4),
\end{array}\right.
\nonumber\allowdisplaybreaks\\
&=&\left\{
\begin{array}{ll}
\chi_{-4}(a)\chi_{-p_1}(a)
&\text{if }q-rm-1\equiv 0\,\, (\text{mod }4),\\
\chi_{-p_1}(a)
&\text{if }q-rm-1\equiv 2\,\, (\text{mod }4).
\end{array}\right.
\end{eqnarray}
By \eqref{ac-mod4}, \eqref{suffice-eq-4-new-re} and
\eqref{suffice-eq-new-new-re}, we get \eqref{lem4.4-suffice-eq-2}
as desired.
\end{proof}

\begin{proof}[Proofs of Theorems \ref{main-thm} and \ref{main-thm-f2}]\,
Let $f=1$ or $2$, and let $n=6$ if $\min\{p_1,p_2\}>3$ and $n=2$ otherwise. By \eqref{app-KM} we have
\begin{eqnarray*}
&&h(-p_1)h(-p_2)\theta(-p_1,-p_2,f)-\frac{h(p_1p_2f^2)\Psi(\omega_{p_1p_2f^2})}{n}\\
&=&\frac{1}{n} \sum_{[\eta]_\sim\in\fX_{p_1p_2f^2}}
\left( \chi^{(p_1p_2f^2)}_{-p_1,-p_2}([I(\eta)]^+)\Psi(\eta) - \Psi(\omega_{p_1p_2f^2}) \right).
\end{eqnarray*}
For $\xi\in\mathbb{X}^0_{p_1p_2f^2}$, let $\xi^\op:=\lfloor \xi \rfloor -\xi'$.
In the proof of Lemma \ref{pf-lem-2}, we
see that $[I(\xi)]=[I(\xi^\op)]^{-1}$ in $\cC_{p_1p_2f^2}$.
By Lemma \ref{unit-str} $h(p_1p_2f^2)$ is odd; so, for any $\xi\in\mathbb{X}^0_{p_1p_2f^2}$ such that 
$\xi$ is not equivalent to $\omega_{p_1p_2f^2}$, we have $[I(\xi)]\neq [I(\xi^\op)]$.
Therefore, there exist representatives $\omega_{p_1p_2f^2}$, $\xi_{(1)}$, $\xi_{(2)}$, $\cdots$, $\xi_{(m)}$
such that
$$\fX_{p_1p_2f^2}=\{[\omega_{p_1p_2f^2}]_\sim,[\xi_{(1)}]_\sim,[\xi_{(1)}^\op]_\sim,\cdots,
[\xi_{(m)}]_\sim,[\xi_{(m)}^\op]_\sim\},$$
where $m=\frac{h(p_1p_2f^2)-1}{2}$.
By Lemma \ref{pf-lem-2} we have
\begin{eqnarray*}
&&h(-p_1)h(-p_2)\theta(p_1,p_2,f)-\frac{h(p_1p_2f^2)\Psi(\omega_{p_1p_2f^2})}{n}\\
&=&\frac{2}{n} \sum_{i=1}^{m}
\left( \chi^{(p_1p_2f^2)}_{-p_1,-p_2}([I(\xi_{(i)})]^+)\Psi(\xi_{(i)}) - \Psi(\omega_{p_1p_2f^2}) \right).
\end{eqnarray*}
Therefore, by Lemma \ref{pf-lem-3} or Lemma \ref{pf-lem-4} and $2\parallel n$, we have
$$h(-p_1)h(-p_2)\theta(-p_1,-p_2,f)-h(p_1p_2f^2)\frac{\Psi(\omega_{p_1p_2f^2})}{n}\equiv 0 \quad (\text{mod }8)$$
as desired.
\end{proof}

\section*{Appendix}
We denote odd primes by $p_1$ and $p_2$. For the small discriminant of $\mathbb{Q}(\sqrt{p_1p_2})$ having the class number 1, 3 or 5, we list the factorizations of 
$$H(f):=H(-p_1,-p_2,f):=h(p_1p_2f^2)\frac{\Psi(\omega_{p_1p_2f^2})}{n}
-h(-p_1)h(-p_2)\theta(-p_1,-p_2,f).$$ 
Here $f$ is 1 or 2,
$h(\Delta)$ is the class number of the quadratic order with discriminant $\Delta$, 
$\Psi$ is the Hirzebruch sum, 
$\omega_{p_1p_2}=(1+\sqrt{p_1p_2})/2$, 
$\omega_{4p_1p_2}=\sqrt{p_1p_2}$,  
$n=6$ (resp., $n=2$) if $\min\{p_1,p_2\}>3$ (resp., if $\min\{p_1,p_2\}=3$), $\theta(1):=\theta(-p_1,-p_2,1):=1$ and $\theta(2):=\theta(-p_1,-p_2,2)
:=\big(2-\big(\frac{-p_1}{2}\big)\big)
\big(2-\big(\frac{-p_2}{2}\big)\big)
-\big(1-\big(\frac{-p_1}{2}\big)\big)
\big(1-\big(\frac{-p_1}{2}\big)\big)$ with the Kronecker symbol $\big(\frac{\phantom{1}}{\phantom{1}}\big)$. 
We denote the fundamental unit of $\mathbb{Q}(\sqrt{p_1p_2})$ by $x_{p_1p_2}+y_{p_1p_2}\omega_{p_1p_2}$.

\vspace{20pt}

\FloatBarrier
\begin{table}[!h]
{Table A1. Fatorizations of $H(-p_1,-p_2,f)$ \,for\, $h(p_1p_2)=1$ and\, $p_1p_2\le 161$}
{\tabcolsep6pt\begin{tabular}{lccccl}
\hline
$\Delta=p_1p_2f^2$ 
&$\Psi(\omega_{\Delta})$
&$(x_{p_1p_2},y_{p_1p_2})$
&$h(\Delta),h(-p_1),h(-p_2)$ 
&$\theta(f)$
&$H(f)$\\ 
\hline
$21=3\cdot 7$ &2 &(2,1) &1,1,1  &1 &0\\
$21\cdot 2^2$ &6 &\, &1,1,1  &3 &0\\
\hline
$33=3\cdot 11$ &2 &(19,8) &1,1,1  &1 &0\\
$33\cdot 2^2$ &10 &\, &1,1,1  &5 &0\\
\hline
$57=3\cdot 19$ &2 &(131,40) &1,1,1  &1 &0\\
$57\cdot 2^2$ &10 &\, &1,1,1  &5 &0\\
\hline
$69=3\cdot 23$ &6 &(11,3) &1,1,3  &1 &0\\
$69\cdot 2^2$ &18 &\, &1,1,3  &3 &0\\
\hline
$77=7\cdot 11$ &6 &(4,1) &1,1,1  &1 &0\\
$77\cdot 2^2$ &18 &\, &1,1,1  &3 &0\\
\hline
$93=3\cdot 31$ &6 &(13,3) &1,1,3  &1 &0\\
$93\cdot 2^2$ &18 &\, &1,1,3  &3 &0\\
\hline
$129=3\cdot 43$ &2 &(15371,2968) &1,1,1  &1 &0\\
$129\cdot 2^2$ &10 &\, &1,1,1  &5 &0\\
\hline
$133=7\cdot 19$ &6 &(79,15) &1,1,1  &1 &0\\
$133\cdot 2^2$ &18 &\, &1,1,1  &3 &0\\
\hline
$141=3\cdot 47$ &10 &(87,16) &1,1,5  &1 &0\\
$141\cdot 2^2$ &26 &\, &3,1,5  &3 
&
$2^3\cdot 3$\\
\hline
$161=7\cdot 23$ &18 &(10847,1856) &1,1,3  &1 &0\\
$161\cdot 2^2$ &18 &\, &1,1,3  &1 &0\\
\hline
\end{tabular}}{}
\end{table}

\FloatBarrier
\begin{table}[!h]
{Table A2. Fatorizations of $H(-p_1,-p_2,f)$ \,for\, $h(p_1p_2)=3$ and\, $p_1p_2\le 1509$}
{\tabcolsep6pt\begin{tabular}{lccccl}
\hline
$\Delta=p_1p_2f^2$ 
&$\Psi(\omega_{\Delta})$
&$(x_{p_1p_2},y_{p_1p_2})$
&$h(\Delta),h(-p_1),h(-p_2)$
&$\theta(f)$
&$H(f)$\\ 
\hline
$321=3\cdot 107$ &18 &(203,24) &3,1,3  &1 
&
$2^3\cdot 3$\\
$321\cdot 2^2$ &42 &\, &3,1,3  &5 
&
$2^4\cdot 3$\\
\hline
$469=7\cdot 67$ &18 &(31,3) &3,1,1  &1 
&
$2^3$\\
$469\cdot 2^2$    &54 &\,        &3,1,1  &3 
&
$2^3\cdot 3$\\
\hline
$473=11\cdot 43$ &18 &(83,8) &3,1,1  &1 
&
$2^3$\\
$473\cdot 2^2$     &42 &\,        &3,1,1  &5 
&
$2^4$\\
\hline
$993=3\cdot 331$ &18 &(2563,168) &3,1,3  &1 
&
$2^3\cdot 3$\\
$993\cdot 2^2$     &42 &\,                &3,1,3  &5 
&
$2^4\cdot 3$\\
\hline
$1101=3\cdot 367$ &22 &(177,11) &3,1,9  &1 
&
$2^3\cdot 3$\\
$1101\cdot 2^2$     &66 &\,                &3,1,9  &3 
&
$2^3\cdot 3^2$\\
\hline
$1257=3\cdot 419$ &38 &(98539,5720) &3,1,9  &1 
&
$2^4\cdot 3$\\
$1257\cdot 2^2$     &94 &\,                &3,1,9  &5 
&
$2^5\cdot 3$\\
\hline
$1509=3\cdot 503$ &46 &(246,13) &3,1,21  &1 
&
$2^4\cdot 3$\\
$1509\cdot 2^2$     &138 &\,                &3,1,21  &3 
&
$2^4\cdot 3^2$\\
\hline
\end{tabular}}{}
\end{table}

\FloatBarrier
\begin{table}[!h]
{Table A3. Fatorizations of $H(-p_1,-p_2,f)$ \,for\, $h(p_1p_2)=5$ and\, $p_1p_2\le 3997$}
{\tabcolsep6pt\begin{tabular}{lccccl}
\hline
$\Delta=p_1p_2f^2$ 
&$\Psi(\omega_{\Delta})$
&$(x_{p_1p_2},y_{p_1p_2})$
&$h(\Delta),h(-p_1),h(-p_2)$
&$\theta(f)$
&$H(f)$\\ 
\hline
$817=19\cdot 43$ &30 &(331,24) &5,1,1  &1 
&
$2^3\cdot 3$\\
$817\cdot 2^2$ &42 &\,                 &5,1,1  &5 
&
$2^4$\\
\hline
$1393=7\cdot 199$ &30 &(3487,192) &5,1,9  &1 
&
$2^4$\\
$1393\cdot 2^2$ &78 &\,                 &5,1,9  &1 
&
$2^3\cdot 7$\\
\hline
$1641=3\cdot 547$ &30 &(4267,216) &5,1,3  &1 
&
$2^3\cdot 3^2$\\
$1641\cdot 2^2$ &54 &\,                 &5,1,3  &5 
&
$2^3\cdot 3\cdot 5$\\
\hline
$1897=7\cdot 271$ &42 &(42895,2016) &5,1,11 &1 
&
$2^3\cdot 3$\\
$1897\cdot 2^2$ &90 &\,                 &5,1,11  &1 
&
$2^6$\\
\hline
$3997=7\cdot 571$ &54 &(280,9) &5,1,5 &1 
&
$2^3\cdot 5$\\
$3997\cdot 2^2$ &162 &\,             &5,1,5  &3 
&
$2^3\cdot 3\cdot 5$\\
\hline
%
\end{tabular}}{}
\end{table}

\bigskip

\bigskip

\noindent Institute of Mathematical Sciences,

\noindent Ewha Womans University,

\noindent Seoul, Korea

\noindent E-mail: jigu.kim.math@gmail.com\bigskip

\noindent Graduate School of Technology Industrial and Social Sciences,

\noindent Tokushima University,

\noindent Tokushima, Japan

\noindent E-mail: mizuno.yoshinori@tokushima-u.ac.jp

\end{document}